\newcommand{\Tfae}{The following conditions are equivalent:}
\newcommand{\m}{\mathfrak{m} }
\newcommand{\p}{\mathfrak{p}}
\newcommand{\FF}{\mathcal{F}}
\providecommand{\e}{{\mathcal E}}
\newcommand{\Z}{\mathbb{Z} }
\newcommand{\N}{\mathbb{N} }
\newcommand{\TT}{\mathcal{T}}
\newcommand{\rt}{\rightarrow}
\newcommand{\lrt}{\longrightarrow}
\newcommand{\im}{\operatorname{image}}
\newcommand{\Ass}{\operatorname{Ass}}
\newcommand{\chars}{\operatorname{char}}
\newcommand{\depth}{\operatorname{depth}}
\newcommand{\grade}{\operatorname{grade}}
\providecommand\Mod{\text{\rm Mod}}
\newcommand{\Supp}{\operatorname{Supp}}
\providecommand\Spec{\text{\rm Spec}}
\newcommand{\injdim}{\operatorname{injdim}}
\newcommand{\cd}{\operatorname{cd}}
\theoremstyle{plain}
\newtheorem{theorem}{Theorem}[section]
\newtheorem{corollary}[theorem]{Corollary}
\newtheorem{lemma}[theorem]{Lemma}
\newtheorem{proposition}[theorem]{Proposition}
\theoremstyle{definition}
\newtheorem{definition}[theorem]{Definition}
\newtheorem{remark}[theorem]{Remark}
\newtheorem*{example*}{\it Example}
\theoremstyle{remark}
\newtheorem{claim}{\bf Claim}
\newtheorem*{claim*}{\it Claim}
\newtheorem*{case*}{\it Case}
\newtheorem*{note*}{\it Note}
\title[Graded components of local cohomology modules]{Graded components of local cohomology modules II}
\address{Department of Mathematics, Indian Institute of Technology Bombay, Powai, Mumbai 400 076, India}
\author{Tony J. Puthenpurakal}
\email{\href{mailto:tputhen@math.iitb.ac.in}{tputhen@math.iitb.ac.in}}
\author{Sudeshna Roy}
\email{\href{mailto:sudeshnaroy.11@gmail.com}{sudeshnaroy.11@gmail.com}}
\date{\today}
\keywords{local comohology, graded local cohomology, Weyl algebra, generalized Eulerian modules}
\begin{document}
\begin{abstract}
Let $A$ be a commutative Noetherian ring containing a field 
%$K$ 
of characteristic zero.
%and let 
Let $R= A[X_1, \ldots, X_m]$ be a polynomial ring and 
%Let 
$A_m(A) = A \langle X_1, \ldots, X_m, \partial_1, \ldots, \partial_m \rangle$ be the $m^{th}$ Weyl algebra over $A$, where $\partial_i=\partial/\partial X_i$. Consider both $R$ and $A_m(A)$ as standard graded with $\deg z=0$ for all $z \in A$, $\deg X_i=1$, and $\deg \partial_i =-1$ 
%for all 
for $i=1, \ldots, m$. We present a few results about the behavior of the graded components of local cohomology modules $H_I^i(R)$, where $I$ is an arbitrary homogeneous ideal in $R$. We mostly restrict our attention to the Vanishing, Tameness, and Rigidity 
%problems. 
properties. To obtain this, we use the theory of $D$-modules and show that generalized Eulerian $A_m(A)$-modules exhibit these properties. As 
%an application, 
a corollary, we further get that components of graded local cohomology modules with respect to a pair of ideals display similar behavior.
\end{abstract}

\maketitle
\section{introduction}	
Let $L$ be a finitely generated graded module over a standard graded ring $T = \bigoplus_{n \geq 0} T_n$ with irrelevant ideal $T_+ = \bigoplus_{n > 0} T_n$. Normally, the graded components of the local cohomology module $H^i_{T_+}(L)$ are 
%notably 
well-behaved. They are finitely generated
%$T_0$-modules 
modules over the base ring $T_0$ and vanish for large $n$.
%In consideration of the above, 
It is therefore natural to expect that
similar behavior carried out by the local cohomology modules supported on arbitrary
homogeneous ideals. On the contrary, 
%it is known for a long time that 
this is not true in general, see \cite[Exercise 15.1.8]{BS}. In Part I of this paper \cite{TP2}, the first author 
%conducted a comprehensive study of 
studied the components of graded local cohomology modules $H^i_I(R)$ in detail, where $I$ is an arbitrary homogeneous ideal of a standard graded polynomial ring $R = R_0[X_1,\ldots, X_m]$ over a regular ring $R_0$ containing a field of characteristic zero. He observed that they are notably well-behaved. In this 
%paper
part, we 
%show that $H^i_I(R)$
%%extend these results to 
%exhibits such properties in 
extend some of his results to a relatively wider framework, where $R_0$ is any commutative Noetherian ring containing a field of characteristic zero. We also give examples to show that Theorems 1.7, 1.8, 1.9, 1.13, 1.14 
%from 
in \cite{TP2} are false if we don't assume that $R_0$ is regular.
%$C$ to be regular.

We first summarize our findings under the following setup.

\s\label{sa}{\it Standard assumption:} 
Throughout this paper $A$ is a commutative Noetherian ring containing a field $K$ of characteristic zero. Let $R= A[X_1, \ldots, X_m]$ be standard graded with $\deg z=0$ for all $z \in A$ and $\deg X_i=1$ for $i=1, \ldots, m$. We also assume $m \geq 1$. Let $I$ be a homogeneous ideal in $R$. 
%We set $M=H_I^i(R)$. 
It is well-known that $H_I^i(R)$ is a graded $R$-module. We set $M:=H_I^i(R)=\bigoplus_{n\in \Z}M_n$.

\vspace{0.2cm}
\noindent
{\bf I.} ({\it Vanishing}): Let $B$ be a commutative Noetherian ring, $J$ be an ideal of $B$, and $N$ be a $B$-module. We all know that $H^j_J(N)=0$ for all $j> \dim N$ 
%(\emph{Grothendieck's Vanishing Theorem}) 
and $j< \grade_J N$, see \cite[Theorem 6.1.2, Theorem 6.2.7]{BS}. 
%In general, 
Since $H^j_J(N)$ is rarely finitely generated, usually it is difficult to tell whether 
%a given local cohomology module 
$H^j_J(N)$ is zero if $\grade_J N < j < \dim N$. Nevertheless, two completely different algorithms have been produced 
%for deciding whether $H^i_I(R)$ vanishes or not in characteristic zero and characteristic $p>0$
%to decide the vanishing of 
to test whether $H^j_J(N)$ vanish or not for the case where $N=B=K[X_1, \ldots, X_m]$ is a polynomial ring over a field $K$ with $\chars K=0$ \cite{UW} and $\chars K=p>0$ \cite{Lyu-2}, respectively.
%Although, the characteristic zero algorithm is a part of the Macaulay2 package `Dmodules', due to complexity the prime characteristic algorithm has not been implemented.  

The following theorem states that vanishing of almost all graded components of $H^i_I(R)$ implies vanishing of $H^i_I(R)$.

\begin{theorem}\label{vanish}
{\rm(}with hypotheses as in \ref{sa}{\rm)}.  If $M _n = 0$ for all $|n|  \gg 0$, then $M = 0$.
\end{theorem}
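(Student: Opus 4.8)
The plan is to lift the $R$-module structure on $M=H^i_I(R)$ to a module structure over a Weyl algebra and then exploit the Euler operator. Let $A_m(A)=A\langle X_1,\dots,X_m,\partial_1,\dots,\partial_m\rangle$ be the $m$-th Weyl algebra over $A$: here $A$ is central, the $X_i$ commute with one another, the $\partial_i$ commute with one another, and $\partial_iX_j-X_j\partial_i=\delta_{ij}$. Then $R$ is a left $A_m(A)$-module with $\partial_i$ acting as the partial derivative $\partial/\partial X_i$, and it is a graded $A_m(A)$-module once we set $\deg a=0$ for $a\in A$, $\deg X_i=1$ and $\deg\partial_i=-1$. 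Since $I$ is homogeneous we may choose homogeneous generators $f_1,\dots,f_t$ of $I$, and then $M$ is the $i$-th cohomology of the \v{C}ech complex of $R$ on $f_1,\dots,f_t$. Each term of that complex is a finite direct sum of localizations $R_{f_{j_1}\cdots f_{j_s}}$ at homogeneous elements, each of which is again a graded left $A_m(A)$-module (the $\partial_i$ extend by the quotient rule), and the \v{C}ech differentials are homogeneous and $A_m(A)$-linear. Hence $M$ carries the structure of a graded left $A_m(A)$-module refining its $R$-module structure; in particular $X_i\colon M_n\to M_{n+1}$ and $\partial_i\colon M_n\to M_{n-1}$ for every $n\in\Z$.

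Next I would record the behaviour of the Euler operator $\mathcal{E}=\sum_{i=1}^m X_i\partial_i\in A_m(A)$, which is homogeneous of degree $0$. On $R$ it is an $A$-linear derivation with $\mathcal{E}\cdot r=(\deg r)\,r$ for homogeneous $r$; being a derivation, this identity propagates to each localization $R_{f_{j_1}\cdots f_{j_s}}$ and then to any subquotient formed by homogeneous $A_m(A)$-linear maps. Consequently $M$ is a generalized Eulerian $A_m(A)$-module: for every $n$ and every homogeneous $z\in M_n$ there is $\ell\ge1$ with $(\mathcal{E}-n)^\ell z=0$. (The \v{C}ech description in fact shows $\mathcal{E}$ acts on $M_n$ as multiplication by $n$, but only the generalized version is needed, exactly as in \cite{TP2}.) Establishing this generalized Eulerian property in the present generality is the one genuinely non-formal step, and it runs along the same lines as the regular case treated in \cite{TP2}.

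Granting this, the theorem follows quickly. Suppose $M\neq0$. By hypothesis the set $\{n:M_n\neq0\}$ is finite and nonempty; let $a$ and $b$ be its smallest and largest elements, so $a\le b$ and $M_{a-1}=0=M_{b+1}$. Take any homogeneous $z\in M_a$. Then $\partial_iz\in M_{a-1}=0$ for all $i$, so $\mathcal{E}z=\sum_iX_i\partial_iz=0$; since $\mathcal{E}$ therefore acts as $0$ on $M_a$, the generalized Eulerian relation $(\mathcal{E}-a)^\ell z=0$ becomes $(-a)^\ell z=0$. Take any homogeneous $z\in M_b$. Then $X_iz\in M_{b+1}=0$, so $0=\partial_i(X_iz)=(X_i\partial_i+1)z$, hence $X_i\partial_iz=-z$; summing over $i$, $\mathcal{E}z=-mz$, so $\mathcal{E}$ acts as $-m$ on $M_b$ and $(\mathcal{E}-b)^\ell z=0$ becomes $(-m-b)^\ell z=0$. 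Since $\Q\subseteq A$, every nonzero integer is a unit of $A$. If $a\neq0$ then $(-a)^\ell$ is a unit, forcing $z=0$ for every $z\in M_a$, i.e.\ $M_a=0$, a contradiction; hence $a=0$. Similarly, if $m+b\neq0$ then $(-m-b)^\ell$ is a unit, forcing $M_b=0$, a contradiction; hence $m+b=0$, i.e.\ $b=-m$. But then $0=a\le b=-m$, which contradicts $m\ge1$. Therefore $M=0$.

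The only genuinely non-formal step is the second paragraph: verifying that $H^i_I(R)$ is a generalized Eulerian $A_m(A)$-module when $A$ is an arbitrary Noetherian $\Q$-algebra. Once this is available, the rest is a short formal computation with the single relation $\partial_iX_i=X_i\partial_i+1$, and characteristic zero is used exactly once, to invert the integers $a$ and $m+b$. (The two-sided boundedness hypothesis genuinely cannot be relaxed to one side only: $H^m_{(X_1,\dots,X_m)}(R)$ is nonzero yet concentrated in degrees $\le-m$.)
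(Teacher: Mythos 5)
Your argument is correct, and it is genuinely more elementary than the route the paper takes. The paper proves this vanishing statement (Theorem~\ref{vanishing}) by first base-changing along $A\to A[[Y]]_Y$ so that the coefficient field is uncountable, then invoking countable generation of $H^i_I(R)$ together with an associated-primes count (Lemma~\ref{cass}, Proposition~\ref{asspre}) and the $\Gamma$-vanishing in Proposition~\ref{GTAM} to manufacture a homogeneous element $\eta$ of degree $1$ acting injectively in all degrees $\ge -m+1$ and a homogeneous $\xi$ of degree $-1$ acting injectively in all degrees $\le -m$ (Proposition~\ref{inj}); vanishing then follows from Corollary~\ref{cgvanish}. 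Your proof bypasses the uncountability, countable generation, and associated-primes apparatus entirely: once $M$ is a graded generalized Eulerian $A_m(A)$-module (the paper's Theorem~\ref{ge}, imported from \cite[Theorem~3.6]{TP2}, which you correctly flag as the one non-formal input), you look directly at the extreme degrees $a=\min\{n: M_n\neq 0\}$ and $b=\max\{n: M_n\neq 0\}$. Since $\partial_i$ kills $M_a$ you get $\e z=0$ there, and since $X_i$ kills $M_b$ the commutator $\partial_iX_i=X_i\partial_i+1$ forces $\e z=-mz$ there; feeding these into $(\e-|z|)^\ell z=0$ and inverting nonzero integers (here is where $\mathbb{Q}\subseteq A$ is used) forces $a=0$ and $b=-m$, contradicting $a\le b$ and $m\ge 1$. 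This is a clean one-paragraph contradiction and, notably, it applies to \emph{any} nonzero graded generalized Eulerian $A_m(A)$-module with two-sided bounded support, with no countable-generation hypothesis at all --- so it is strictly more general than Corollary~\ref{cgvanish}. What you lose relative to the paper's machinery is the tameness refinement (Theorem~\ref{tameness}): your argument only sees the two boundary degrees and therefore does not yield the ``$M_n\ne 0$ for all $n\ge n_0$ (resp.\ $n\le n_0$)'' propagation, which is what the injective-multiplication maps $\eta$, $\xi$ of Proposition~\ref{inj} provide. For the vanishing theorem alone, however, your shortcut is complete and correct.
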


%In view of Theorem \ref{vanish}, it follows that 
%The above theorem precisely states 
Notice that if
$M = H^i_I(R) = \bigoplus_{n \in \Z}M_n $ is \emph{non-zero} then either $M_n \neq 0$ for infinitely  many $n \ll 0$, OR, $M_n \neq 0$ for infinitely  many $n \gg 0$.

\vspace{0.2cm}
\noindent
{\bf II.} ({\it Tameness}): 
In \cite{BrodHel}, M. P. Brodmann and M. Hellus raised the most challenging question related to the asymptotic behavior of local cohomology, which is the so-called {\it tameness problem}, that is, whether there exists an integer $n_0$ such that $H^i_{T_+}(L)_n$ vanishes for all $n \leq n_0$, or else does not vanish for all $n \leq n_0$. This question has been answered affirmatively in many cases, in particular, if $i = \cd(L, T_+) = \max\{j : H^j_{T_+}(L) \neq 0\}$, the cohomological dimension of $L$ with respect to $T_+$, and if $\dim T_0 \geq 2$, see \cite{BrodHel}, \cite{Lim}, \cite{RotSeg}, \cite{CJR}. S. D. Cutkosky and J. Herzog \cite{CutHer} constructed an example to show that the above question has a negative answer if $\dim T_0 = 3$.

We show that $M$ is \emph{tame}, that is, the tameness problem has a positive answer when $L = T = R$.
%, where $R$ is as in \ref{sa}. 
More precisely, 

\begin{theorem}\label{tame}
	{\rm(}with hypotheses as in \ref{sa}{\rm)}. Then we have
	\begin{enumerate}[\rm (a)]
		\item
		The following assertions are equivalent:
		\begin{enumerate}[\rm(i)]
			\item
			$M_n \neq 0$ for infinitely many $n \ll 0$.
			\item
			$M_n \neq 0$ for all $n \leq -m$.
		\end{enumerate}
		\item
		The following assertions are equivalent:
		\begin{enumerate}[\rm(i)]
			\item
			$M_n \neq 0$ for infinitely many $n \gg 0$.
			\item
			$M_n \neq 0$ for all $n \geq 0$.
		\end{enumerate}
	\end{enumerate}
\end{theorem}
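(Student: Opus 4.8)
The plan is to use the module structure of $M$ over the Weyl algebra. Recall that $M=H^i_I(R)$ is naturally a module over the ring $D_A(R)$ of $A$-linear differential operators on $R$; since $\mathbb{Q}\subseteq A$, this ring is the Weyl algebra $A_m(A)=A\langle X_1,\dots,X_m,\partial_1,\dots,\partial_m\rangle$ with $\partial_j=\partial/\partial X_j$. Indeed, for homogeneous $f$ the localization $R_f$ is a graded $A_m(A)$-module extending its $R$-structure, and the \v{C}ech complex of $R$ on a homogeneous system of generators of $I$ is a complex of graded $A_m(A)$-modules with $A_m(A)$-linear differentials whose $i$-th cohomology is $M$. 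Moreover $M$ is generalized Eulerian: the Euler operator $\mathcal{E}=\sum_{j=1}^{m}X_j\partial_j$ acts as multiplication by $n$ on the $n$-th graded piece of $R$ and of each $R_f$, so $\mathcal{E}-n$ acts nilpotently on $M_n$ for every $n$ (in fact as $0$, but only nilpotence will be used). The sole arithmetic input beyond this is that every nonzero integer is a unit in $A$; hence $\mathcal{E}|_{M_n}$ is invertible precisely when $n\ne0$, and $(\mathcal{E}+m)|_{M_n}=\sum_j\partial_jX_j|_{M_n}$ is invertible precisely when $n\ne-m$. If the $A_m(A)$-structure and the generalized Eulerian property are not already available, they are established just as the corresponding statements in \cite{TP2}, whose arguments never use regularity of $A$.

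Next I record two facts about the graded pieces; write $\mathfrak{m}=(X_1,\dots,X_m)$. Since $\mathcal{E}|_{M_n}$ is invertible for $n\ne0$, factoring $\mathcal{E}=\sum_jX_j\partial_j$ as $M_n\to M_{n-1}^{\oplus m}\to M_n$ --- first the diagonal map built from the $\partial_j$, then the map $\sum X_j$ --- shows the second map is surjective, whence
\[
M_n=\mathfrak{m}\,M_{n-1}\qquad\text{for all }n\ne0 .
\]
Dually, as $\sum_j\partial_jX_j=\mathcal{E}+m$ is invertible on $M_n$ for $n\ne-m$, factoring it as $M_n\to M_{n+1}^{\oplus m}\to M_n$ --- first the diagonal built from the $X_j$, then the map $\sum\partial_j$ --- shows the first map is injective, whence
\[
\bigl(0:_{M_n}\mathfrak{m}\bigr)=0\qquad\text{for all }n\ne-m .
\]

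Both parts now follow by descent. For (b): if $M_{n_0}=0$ with $n_0\ge0$, then for each $k$ with $1\le k\le n_0$ we have $\mathfrak{m}M_{k-1}=M_k$ and $k-1\ge0\ne-m$, so descending from $M_{n_0}=0$ forces $M_{n_0-1}=0,\dots,M_0=0$; then $M_n=\mathfrak{m}^nM_0=0$ for all $n\ge0$. Thus $M_n=0$ for some $n\ge0$ forces $M_n=0$ for all $n\ge0$, and the contrapositive is the implication (i)$\Rightarrow$(ii); (ii)$\Rightarrow$(i) is trivial. For (a): if $M_{n_0}=0$ with $n_0\le-m$, then $\mathfrak{m}M_{n_0-1}=M_{n_0}=0$ (note $n_0\ne0$) and $n_0-1<n_0\le-m$, so $n_0-1\ne-m$ and hence $M_{n_0-1}=0$; iterating downward, $M_n=0$ for all $n\le n_0$. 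Thus $M_n=0$ for some $n\le-m$ forces $M_n=0$ for all $n\ll0$, and the contrapositive is (i)$\Rightarrow$(ii); again (ii)$\Rightarrow$(i) is trivial. (Combining the two conclusions also re-proves Theorem \ref{vanish}.)

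The main obstacle is the content of the first paragraph: making sure the theory of (generalized) Eulerian $A_m(A)$-modules remains valid when $A$ is merely Noetherian with $\mathbb{Q}\subseteq A$ rather than regular --- concretely, that $M$ carries a compatible $A_m(A)$-action and that $\mathcal{E}-n$ is locally nilpotent on $M_n$. Granting this, the combinatorial argument above is short and essentially independent of $m$, the only role of $m$ being the bookkeeping with the two exceptional degrees $0$ and $-m$.
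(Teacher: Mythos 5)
Your proof is correct, and it takes a genuinely different route from the paper. The paper does not prove Theorem~\ref{tame} from the countable-generation machinery of Sections 5--6 (Theorem~\ref{tameness} is in fact too weak for it, since it only propagates nonvanishing upward from some $n_0\geq -m+1$, not down to degree $0$); rather, the paper obtains Theorem~\ref{tame} as a consequence of the rigidity results in Section~7 (Theorem~\ref{rigid1}/Corollary~\ref{rigid2}), which are proved by induction on $m$ using the Koszul-homology propositions~\ref{pe}, \ref{ve}, \ref{pc}, \ref{vc} on the single operators $\partial_m$ and $X_m$, reducing from $A_m(A)$-modules to $A_{m-1}(A)$-modules. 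Your argument instead factors the Euler operator $\mathcal{E}=\sum_j X_j\partial_j$ and its conjugate $\mathcal{E}+m=\sum_j\partial_j X_j$ through $M_{n\mp1}^{\oplus m}$ to extract directly the two facts $M_n=\mathfrak{m}M_{n-1}$ for $n\neq0$ and $(0:_{M_n}\mathfrak{m})=0$ for $n\neq -m$, and then finishes with a short descent; this requires no induction on $m$ at all and bypasses both the Koszul-homology lemmas and the uncountable-field/base-change technique. You also observe correctly that the generalized Eulerian property plus invertibility of nonzero integers is the only arithmetic input, and that the argument simultaneously reproves Theorem~\ref{vanish} and, on the side $n\geq0$, the stronger rigidity statement of Theorem~\ref{rigid-intro}(b). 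The one place where you are slightly loose is the parenthetical ``in fact as $0$'': what the generalized Eulerian property gives is that $(\mathcal{E}-n)$ is \emph{locally} nilpotent on $M_n$ (the exponent $a$ depends on the element, not just on $n$); your claimed bijectivity of $\mathcal{E}|_{M_n}$ for $n\neq0$ and of $(\mathcal{E}+m)|_{M_n}$ for $n\neq -m$ still holds under local nilpotence, but it is worth stating that explicitly, since for a general graded Lyubeznik functor $\TT(R)$ the module need not be Eulerian on the nose.
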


\vspace{0.2cm}
\noindent
{\bf III.} ({\it Rigidity}): 
Surprisingly non-vanishing of a single graded component of $M = H^i_I(R)$ is very strong. We prove the following rigidity result:
\begin{theorem}\label{rigid-intro}
	{\rm(}with hypotheses as in \ref{sa}{\rm)}. Then we have
	\begin{enumerate}[\rm (a)]
		\item
		The following assertions are equivalent:
		\begin{enumerate}[\rm(i)]
			\item
			$M_r \neq 0$ for  some $r \leq -m$.
			\item
			$M_n \neq 0$ for all $n \leq -m$.
		\end{enumerate}
		\item
		The following assertions are equivalent:
		\begin{enumerate}[\rm(i)]
			\item
			$M_s \neq 0$ for some $s \geq 0$.
			\item
			$M_n \neq 0$ for all $n \geq 0$.
		\end{enumerate}
		\item
		{\rm(}When $m \geq 2$.{\rm)} 
		The following assertions are equivalent:
		\begin{enumerate}[\rm(i)]
			\item
			$M_t \neq 0$ for some $t$ with $-m < t < 0$.
			\item
			$M_n \neq 0$ for all $n \in \mathbb{Z}$.
		\end{enumerate}
	\end{enumerate}
\end{theorem}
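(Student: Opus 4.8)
The plan is to reduce everything to the theory of generalized Eulerian modules over the Weyl algebra $A_m(A) = A\langle X_1,\dots,X_m,\partial_1,\dots,\partial_m\rangle$, which is the technical engine behind Theorems \ref{vanish}, \ref{tame} and is surely developed earlier in the paper. The key point is that $M = H^i_I(R)$ carries a natural structure of a graded module over $A_m(A)$, and that as such it is \emph{generalized Eulerian}: the Eulerian operator $\mathcal{E} = \sum_{j} X_j\partial_j$ acts on $M_n$ as multiplication by (a unit times) $n$ up to nilpotents, more precisely for each $z \in M_n$ there is $k$ with $(\mathcal{E}-n)^k z = 0$. The three equivalences in the theorem should all follow from a single structural fact: the maps $\partial_j \colon M_n \to M_{n-1}$ and $X_j \colon M_n \to M_{n+1}$ interact with the grading in a way that forces non-vanishing to propagate.

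First I would prove part (a). The implication (ii)$\Rightarrow$(i) is trivial. For (i)$\Rightarrow$(ii): suppose $M_r \neq 0$ for some $r \leq -m$. The operators $X_1,\dots,X_m$ raise degree by one. The claim is that if $M_r \neq 0$ with $r \leq -m$, then $M_{r-1} \neq 0$ and also $M_{r+1} \neq 0$ as long as $r+1 \leq -m$; iterating downward gives $M_n \neq 0$ for all $n \leq r$, and combined with Theorem \ref{tame}(a) (the dichotomy already forces $M_n \neq 0$ for all $n \leq -m$ once it is nonzero for infinitely many $n \ll 0$) we are done --- so it actually suffices to show that nonvanishing at one degree $\leq -m$ forces nonvanishing at infinitely many degrees $\ll 0$. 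For this I would use that $X_1 \cdots$ acts injectively, or rather that the kernel of each $X_j$ on $M$ behaves well: in the generalized Eulerian world, $\partial_j \colon M_{-m} \to M_{-m-1}$ composed appropriately, or the Koszul-type complex $X_1,\dots,X_m$ on $M$, has homology controlled by $H^*_{(X_1,\dots,X_m)}$, and these vanish in the relevant degrees. The cleanest route: show the multiplication map $(X_1,\dots,X_m)^{[N]}$ or the single element $X_1$ acts injectively on $M_n$ for $n < -m$ (this is essentially the statement that $H^0_{(X_1,\dots,X_m)}(M)$ has no components in those degrees), hence $M_r \neq 0 \Rightarrow M_{r-1}\neq 0$; descending and invoking \ref{tame}(a)(i)$\Rightarrow$(ii) closes the loop. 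Part (b) is the mirror image, using $\partial_j \colon M_n \to M_{n-1}$ acting injectively for $n > 0$ (equivalently $M$ has no $\partial$-torsion in positive degrees, which is where characteristic zero is essential), so $M_s \neq 0 \Rightarrow M_{s+1} \neq 0$, descending gives $M_n \neq 0$ for all $n \geq s \geq 0$, and \ref{tame}(b) upgrades this to all $n \geq 0$.

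For part (c), assume $m \geq 2$ and $M_t \neq 0$ for some $t$ strictly between $-m$ and $0$. Here neither the $X$-argument nor the $\partial$-argument by itself reaches outside the window $(-m,0)$, but I can run both: applying some $\partial_j$ (injective on components in degrees $> 0$ --- but $t$ might be $\leq 0$, so I instead use that for $-m < t \leq 0$ at least \emph{one} of $\partial_j$ or $X_j$ is injective on $M_t$, since the obstruction $H^0$ and $H^m$ of the Koszul complex on $X_1,\dots,X_m$ live only in degrees $\geq 0$ and $\leq -m$ respectively). Concretely: if $t < 0$, then $X_j \colon M_t \to M_{t+1}$ is injective for all $j$ (the $X$-torsion submodule sits in degrees $\geq 0$, or rather I should check the precise bound --- this is the delicate numerical point), so nonvanishing propagates up to degree $-1$ and then hits degree $\leq -m$ going down; simultaneously if $t$ can be pushed via $\partial$ we reach nonnegative degrees. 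Then parts (a) and (b), already proved, take over: once $M_n \neq 0$ for some $n \leq -m$ AND for some $n \geq 0$, we get $M_n \neq 0$ for all $n \leq -m$ and all $n \geq 0$; the remaining middle degrees $-m < n < 0$ follow because each is hit by applying a single $X_j$ (which is injective there) starting from $M_{-m}\neq 0$, or by symmetry. The main obstacle I anticipate is pinning down exactly which degrees admit injective $X_j$ (resp.\ $\partial_j$): this rests on identifying the graded $X$-torsion and $\partial$-torsion submodules of a generalized Eulerian $A_m(A)$-module, which in turn should come from the structure theory in the earlier sections (the $m=1$ case, then an inductive step reducing $m$ to $m-1$ via one variable at a time), and getting the boundary indices $-m$ and $0$ to match up exactly is where the bookkeeping will be subtle. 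Everything else is formal propagation plus citation of \ref{tame}.
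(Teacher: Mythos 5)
Your plan for parts (a) and (b) --- propagate nonvanishing from the given degree by injectivity of a degree-$\pm1$ operator, then hand off to Theorem \ref{tame} --- is a valid reduction, and it is a genuinely different route from the paper's. The paper's proof (Theorem \ref{rigid1}) establishes the full four-way equivalence (including both tameness and rigidity at once) by a direct Koszul-homology induction on $m$, using only the generalized-Eulerian structure and the concentration results for $H_\bullet(\partial_1;-)$, $H_\bullet(X_1;-)$; it never invokes countable generation, generic elements or an uncountable field. Your route instead spends the harder result Theorem \ref{tame} as a black box and only needs to supply a one-step propagation. The cost is that in the paper these two facts come out of a single induction, so your reduction silently leans on the stronger theorem. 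Be aware also of a systematic sign/direction error in your write-up: $X_j$ \emph{raises} degree, so injectivity of $X_j$ on $M_n$ gives $M_n\neq 0\Rightarrow M_{n+1}\neq 0$ (propagation \emph{up}); the correct tool for descent in part (a) is the degree-$(-1)$ side, via $\Gamma_{(\partial_1,\dots,\partial_m)}(M)_j=0$ for $j\le -m$ and the generic $\xi$ of Proposition \ref{inj}. You have $X$ and $\partial$ swapped in both (a) and (b). These slips are local and fixable.

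Part (c) has a genuine gap that does not survive repair. From $M_t\neq 0$ with $-m<t<0$, a generic degree-$1$ element $\eta$ is injective on $M_i$ for $i\ge -m+1$, so one propagates \emph{up}: $M_n\neq 0$ for all $n\ge t$, in particular $M_0\neq 0$, and then (b) covers $n\ge 0$. But there is no injectivity tool to go \emph{down} through the window $(-m,0)$: the degree-$(-1)$ generic element $\xi$ is only injective on $M_i$ for $i\le -m$, and your claim that individual $X_j$ are injective on $M_t$ in the window is unjustified (Proposition \ref{GTAM} controls $\Gamma_{(X_1,\dots,X_m)}(M)$, not the torsion of a single $X_j$, and even the generic $\eta$ only propagates upward). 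So your argument never reaches a degree $\le -m$ and never reaches $n<t$ inside the window. The paper's Theorem \ref{rigidity} does something essentially different: one considers the Koszul homologies $H_l(\partial_m;M)$ and $H_l(X_m;M)$, which are generalized Eulerian over $A_{m-1}(A)$, and runs a dichotomy by induction on $m$ --- either these homologies are nonvanishing somewhere in the relevant range, and one concludes by the inductive hypothesis and parts (a),(b), or they vanish across the whole range, in which case the Koszul exact sequence yields isomorphisms $M_{-1}\cong M_{-2}\cong\cdots\cong M_{-m}$ (resp.\ up to $M_0$), contradicting the hypothesis. This case analysis is what actually carries nonvanishing across the middle window, and it cannot be replaced by injectivity plus (a), (b). Your proposal correctly anticipates that the numerical bookkeeping ``is where the bookkeeping will be subtle,'' but the missing ingredient is not bookkeeping --- it is the Koszul-homology dichotomy itself.
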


\makeatletter
\edef\orig@output{\the\output}
\output{\setbox\@cclv\vbox{\unvbox\@cclv\vspace{0pt plus 20pt}}\orig@output}
\makeatother
The above result says that the local cohomology module $H^i_I(R)$ must vanish if both $H^i_I(R)_0$ and $H^i_I(R)_{-m}$ are zero.

\vspace{0.2cm}
\noindent
\emph{Techniques used to prove our results:}
The main idea in \cite{TP2} is that if $A = K[[Y_1,\ldots, Y_d]]$ then graded local cohomology modules over $R = A[X_1,\ldots, X_m]$ become holonomic over an appropriate ring of differential operators, see \cite[Theorem 4.2]{TP2}. Furthermore, local cohomology modules are ``\textit{generalized Eulerian}", see \cite[Theorem 3.6]{TP2}
(we will discuss this property later). One of the observation we first made was that in Theorems 1.2, 1.3 and 1.6 in \cite{TP2}, the generalized Eulerian property of local cohomology modules was used more crucially. 
%Furthermore, 
Additionally, in \cite{TP2}, the generalized Eulerian property of local cohomology modules was proved very generally.

We now discuss the techniques used to prove our results: Let $A$ be a Noetherian ring containing a field of characteristic zero. Let $R = A[X_1,\ldots,X_m]$
%, graded with $\deg A = 0$ and $\deg X_i = 1$ for all $i$. Let 
and $A_m(A)$ be the $m^{th}$ Weyl algebra on $A$. 
%We consider it a graded ring with $\deg A = 0$, $\deg X_i = 1$ and $\deg \partial_i = -1$. 
We give standard grading on $R$ and $A_m(A)$. Notice that $R$ is a graded subring of $A_m(A)$. If $W$ is a graded $A_m(A)$-module and $w$ is a homogeneous element of $W$, then set $|w| = \deg w$. 

Consider the Eulerian operator $\e_m = \sum_{i = 1}^{m} X_i \partial_i$. If $f \in R$ is homogeneous, then it is easy to check that $\e_m f = |f| f$. We say a graded $A_m(A)$-module $W$ is \textit{Eulerain} if $\e_m w = |w| w$ for each homogeneous element $w$ of $W$. 
%Notice 
Clearly, $R$ is an Eulerian $A_m(A)$-module. We say $W$ is \textit{generalized Eulerian} if for each homogeneous $w$ of $W$ there exists $a$ depending on $w$ such that
$(\e_m - |w|)^aw = 0$.

The notion of Eulerian modules was introduced in the case when $A$ is a field $K$ by L. Ma and W. Zhang \cite{MZ} (they also defined the notion of Eulerian  $D$-modules in characteristic $p > 0$, where $D$ is the ring of $K$-linear differential operators on $R = K[X_1,\ldots, X_m]$). Unfortunately however the class of Eulerian $D$-modules is not closed under extensions (see 3.5(1) in \cite{MZ}). To rectify this, the first author introduced the notion of generalized Eulerian $D$-modules (in characteristic zero), see \cite{TP6}.

The next technique that we use is the technique of de Rham cohomology, Koszul homology of generalized Eulerian modules. We generalize several properties which were proved when $A = K$
%, a field 
and the relevant module was holonomic over $A_m(K)$. It is worth pointing out that generalized Eulerian $A_m(K)$-modules 
%even over $A_m(K)$ 
are not necessarily holonomic, see \cite[Remark 3.6]{MZ}.

The final technique we use to prove vanishing and tameness property of local cohomology modules is that local cohomology modules are countably generated over $R = A[X_1,\ldots, X_m]$. We can exploit this fact if $A$ contains an uncountable field which one can always assume by base change, see Remark \ref{uncountable}. This was observed first in \cite{TP4}. 
%By base change one can always assume that $A$ contains an uncountable field, see 
%\ref{uncountable}.

%\begin{remark}
%Note that 
Practically, all the proofs in this paper, except the last section, use techniques that were first developed by several researchers (including the first author) for holonomic modules over some appropriate ring of differential operators. In the generality we consider, we do not have the notion of holonomic modules. However we have the notion of generalized  Eulerian modules. After carefully rewriting several proofs we were able to deduce our results.
%\end{remark}

Let $B$ be a commutative Noetherian ring, and $I, J$ be ideals in $B$. Let $N$ be a $B$-module. In \cite{TYY}, R. Takahashi et al introduced the notion of local cohomology module of $N$ with respect to the pair of ideals $(I, J)$ and denoted it by $H^i_{I,J}(N)$, see \ref{recall-Lc-pair} \ref{itm:1}. They noticed that the local cohomology functor $H^i_{I,J}(-)$ coincides with a functor defined by H. Brenner in \cite{Brenner}. Let $T$ be a graded ring, $I$ be a homogeneous ideal, and $J$ be an arbitrary ideal in $T$. Let $L$ be a graded $T$-module. In \cite{LimPer}, P. H. Lima and V. H. Jorge P\'erez 
%introduced 
defined a grading on $\Gamma_{I,J}(L)$, the $(I,J)$-torsion submodule of $N$, which induces a grading on $H^i_{I,J}(L)$. They mostly studied the graded components of $H^i_{T_+,J}(L)$ and extended some classical results about the components of $H^i_{T_+}(L)$ to $H^i_{T_+,J}(L)$ when 
%$R=R_0[R_1]$ 
$T$ is a standard graded Noetherian ring, $J$ is an ideal generated by elements of degree zero, and the module $L$ is finitely generated. But, they didn't talk about any graded version of the generalized \v Cech complex, presented in \cite{TYY}. 
%In this paper, 
In the last section, we do so when both $I,J$ are homogeneous ideals in $T$. 
%As a consequence, 
%Consequently, we get that 
We use it to show that if $T=R$ as in \ref{sa} and $L$ is a generalized Eulerian $A_m(A)$-module, then $H^i_{I,J}(L)$ is generalized Eulerian. Consequently, it is obtained that the vanishing, tameness, and rigidity properties hold for $H^i_{I,J}(L)$.

%We describe in brief the contents of this chapter. 
The paper is structured as follows. Section 2 
%deals with the 
consists of basic preliminaries that we need. In Section 3, we talk about de Rham cohomology and Koszul cohomology of generalized Eulerian $A_m(A)$-modules. In Section 4, some properties of $A_m(A)$-modules are discussed. 
%		Section 5 is devoted to studying 
In Section 5, we show the countable generation of local cohomology modules. In Section 6 and Section 7, we prove our main results regarding vanishing, tameness, and rigidity property of generalized Eulerian $A_m(A)$-modules. In Section 8, an application of our theory to graded local cohomology modules are considered. 
%At the end of 
In Section 9, we give examples to show that Theorems 1.7, 1.8, 1.9, 1.13, 1.14 in \cite{TP2} are false if we don't assume $A$ is regular. 
%In 
The last section is devoted for the study of 
%{\color{red}{we deal with the}} 
graded local cohomology module with respect to a pair of ideals.

\section{Preliminaries}
In this section, we recall the definition of graded Lyubeznik functors and discuss its behavior under flat extension. 

\s\label{LyuDef} {\it Lyubeznik functors}

Let $B$ be a commutative Noetherian ring.
%and set $X= \Spec(B)$. Let 
%and $Y$ be a closed subset of $\Spec(B)$. 
If $M$ is a $B$-module and $Y$ is a locally closed subscheme of $\Spec(B)$, we denote by $H^i_I(M)$ the $i^{th}$ local cohomology module of $M$ supported in $Y$. Suppose $Y=Y_2-Y_1$ where $Y_1 \subset Y_2$ are two closed subsets of $\Spec(B)$, then we have an exact sequence of functors 
\begin{equation}\label{lyu}
\cdots \to H_{Y_1}^i(-) \to H_{Y_2}^i(-) \to H_Y^i(-) \to H_{Y_1}^{i+1}(-) \to \cdots.
\end{equation}
A {\it Lyubeznik functor} $\TT$ is any functor of the form $\TT=\TT_1 \circ \TT_2 \circ \cdots \circ\TT_r$ where each $\TT_i$ is either $H_Y^i(-)$ for some locally closed subset $Y$ of $\Spec(B)$ or the kernel, image, or cokernel of some arrow appearing in \eqref{lyu} for closed subsets $Y_1, Y_2$ of $\Spec(B)$ such that $Y_1 \subset Y_2$.

\s\label{grd_Lyu} {\it Graded Lyubeznik functors}.

Let $A$ be a commutative Noetherian ring, and let $R=A[X_1, \ldots, X_n]$ be standard graded. We say $Y$ is a homogeneous closed subset of $\Spec(R)$ if $Y=V(f_1, \ldots, f_s)$, where $f_i$'s are homogeneous polynomials in $R$. We say $Y$ is a homogeneous locally closed subset of $\Spec(R)$ if $Y=Y''-Y'$, where $Y'\subset Y''$ are homogeneous closed subsets of $\Spec(R)$. Let $^*\Mod(R)$ denote the category of graded $R$-modules. Then we have an exact sequence of functors on $^*\Mod(R)$, 
\begin{equation}\label{glyu}
\cdots \to H_{Y'}^i(-) \to H_{Y''}^i(-) \to H_Y^i(-) \to H_{Y'}^{i+1}(-) \to \cdots.
\end{equation}
\begin{definition}\label{grd_Lyu_def}
A {\it graded Lyubeznik functor} $\TT$ is a composite functor of the form $\TT=\TT_1 \circ \TT_2 \circ \cdots \circ\TT_r$ where each $\TT_j$ is either $H_{Y_j}^i(-)$ for some homogeneous locally closed subset $Y_j$ of $\Spec(R)$ or the kernel, image, or cokernel of any arrow appearing in \eqref{glyu} with $Y=Y_j, ~Y'=Y'_j$ and $Y''=Y''_j$ such that $Y_j=Y''_j-Y'_j$ and $Y'_j \subset Y''_j$ are homogeneous closed subsets of $\Spec(R)$.
\end{definition}

\s {\it Graded Lyubeznik functor under flat maps}.\label{fglyu}

Let $\phi: A \to C$ be a flat homomorphism of Noetherian rings, and let $R=A[X_1, \ldots, X_n]$ be standard graded. We set $S=R\otimes_A C= C[X_1, \ldots, X_n]$. Clearly $\phi$ induces a map $\phi':R \to S$. Let $\TT$ be a graded Lyubeznik functor on $^* \Mod(R)$. Let $Y=V(f_1, \ldots, f_s)$ be a homogeneous closed subset of $\Spec(R)$.
%, say $Y= V(f_1, \ldots, f_s)$. 
Set $Y'=V(f'_1, \ldots, f'_s) $, where $f'_i= \phi'(f_i)$. Note that $Y'$ is a homogeneous closed subset of $\Spec(S)$. 
%Furthermore it is clear that 
Clearly we have a homogeneous isomorphism $H_Y^i(-) \otimes_A C \cong H_{Y'}^i(-)$. If $Y$ is a homogeneous locally closed subset of $\Spec(R)$, say $Y= Y_2-Y_1$, then we put $Y'=Y'_2-Y'_1$. Thus $Y'$ is a homogeneous locally closed subset of $\Spec(S)$. 
%Furthermore, 
Applying the functor $(-) \otimes_A C$ to \eqref{glyu}, 
%yields 
we get 
%an exact sequence \[\cdots \to H_{Y'_1}^i(-) \to H_{Y'_2}^i(-) \to H_{Y'}^i(-) \to H_{Y'_1}^{i+1}(-) \to \cdots\] 
a homogeneous isomorphism $H_{Y'}^i(-) \cong H_Y^i(-) \otimes_A C$. More generally, if $\TT$ is a graded Lyubeznik functor on $^*\Mod(R)$, then $\TT \otimes_A C$ is a graded Lyubeznik functor on $^*\Mod(S)$.
\begin{note*}\label{flat-L}
	The flat maps, we are interested in are the following:
	
	(i) $\phi: A \to A_W$, where $W$ is a multiplicatively closed subset of $A$, 
	
	(ii) 
%	$\psi: A \to \widehat{A_{\mathfrak{p}}}$, where $\widehat{A_{\mathfrak{p}}}$ is the completion of $A_{\mathfrak{p}}$ for some prime ideal $\mathfrak{p}$ in $A$.
$\psi: A \to A[[Y]]_Y$, where $A[[Y]]$ is a formal power series ring over $A$ in one variable. This is an important ingredient in our proofs of Theorems 8.5 and 8.6.
 
% {\color{blue}{as localization in the map $A \to A[[Y]]_Y$ is already mentioned in (i)?}}.
\end{note*}

\section{Generalized Eulerian \texorpdfstring{$A_m(A)$}{Am(A)}-modules}
Let $A$ be a commutative Noetherian ring containing a field $K$ of characteristic zero. Let $A_m(A)$ be the $m^{th}$ Weyl algebra over $A$. Notice that
\[A_m(A)= A \otimes_K A_m(K)= A\langle X_1, \ldots, X_m, \partial_1, \ldots, \partial_m\rangle.\] 
We can consider $A_m(A)$ graded, by giving $\deg z=0$ for all $z \in A$, $\deg X_i=1,~\deg \partial_i=-1$. The Euler operator on $A_m(A)$, denoted by $\e_m$, is defined as $\e_m:= \sum_{i=1}^m X_i\partial_i$. Note that $\deg \e_m=0$. Let $M$ be a graded $A_m(A)$-module. For any homogeneous element $y$ of $M$, we set $|y|= \deg y$.

\begin{definition}
A graded $A_m(A)$-module $M$ is said to be {\it generalized Eulerian} if for each homogeneous element $y$ of $M$ there exists a positive integer $a$ {\rm(}depending on $y${\rm)} such that \[(\e_m-|y|)^a \cdot y=0. \]
\end{definition}

We now recall the following well known result and outline a proof.
\begin{lemma}\label{noether}
If $\Lambda$ is left {\rm(}resp. right{\rm)} Noetherian {\rm(}not necessarily commutative{\rm)} ring then the Weyl Algebra $A_m(\Lambda)$ is also left {\rm(}resp. right{\rm)} Noetherian for all $m \geq 1$.
\end{lemma}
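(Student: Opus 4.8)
The plan is to reduce to the classical Hilbert basis theorem via an associated-graded / filtration argument, exactly as in the standard proof that $A_m(K)$ is Noetherian when $K$ is a field, but keeping $\Gamma$ non-commutative throughout. First I would observe that it suffices to handle $m=1$ and then iterate: since $A_m(\Gamma) = A_1(A_{m-1}(\Gamma))$, if we know the statement for $m=1$ over an arbitrary (possibly non-commutative) left Noetherian base, an easy induction on $m$ finishes the general case. So fix $\Gamma$ left Noetherian and consider $A_1(\Gamma) = \Gamma\langle X, \partial\rangle$ with the relation $\partial X - X\partial = 1$ (and $X,\partial$ commuting with $\Gamma$, or more precisely central over $K$; the commutation relations with $\Gamma$ do not affect the argument).

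The key step is to equip $A_1(\Gamma)$ with the Bernstein filtration (or just the filtration by total degree in $X$ and $\partial$): let $F_n$ be the $\Gamma$-submodule spanned by monomials $X^i\partial^j$ with $i+j \le n$. One checks that $F_0 = \Gamma$, $F_nF_{n'} \subseteq F_{n+n'}$, and that the associated graded ring $\operatorname{gr}^F A_1(\Gamma) = \bigoplus_n F_n/F_{n-1}$ is isomorphic to the polynomial ring $\Gamma[x,y]$ in two \emph{central} indeterminates (the commutator $[\partial,X]=1$ lies in $F_0$, hence dies in the associated graded, so the principal symbols of $X$ and $\partial$ commute). Now $\Gamma[x,y] = (\Gamma[x])[y]$, and by the non-commutative Hilbert basis theorem — if $\Lambda$ is left Noetherian then $\Lambda[x]$ is left Noetherian — we get that $\operatorname{gr}^F A_1(\Gamma)$ is left Noetherian. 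The standard lifting lemma then applies: if a filtered ring has left Noetherian associated graded, the ring itself is left Noetherian. (Given a left ideal $I$, the ideal $\bigoplus_n (I\cap F_n)/(I\cap F_{n-1})$ in the associated graded is finitely generated; lift the generators to $I$; a filtration-degree induction shows they generate $I$.) The right Noetherian case is identical with ``right'' replacing ``left'' everywhere, noting $\operatorname{gr}^F A_1(\Gamma)$ is symmetric enough that the same computation goes through.

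The main obstacle — really the only point requiring care — is making sure every ingredient is valid over a non-commutative base: that the non-commutative Hilbert basis theorem $\Lambda$ left Noetherian $\Rightarrow \Lambda[x]$ left Noetherian is available (it is classical, e.g.\ the same leading-coefficient-ideal argument as in the commutative case works verbatim since $x$ is central), and that the associated-graded-lifting lemma is stated for possibly non-commutative filtered rings (it is — see e.g.\ McConnell–Robson, \emph{Noncommutative Noetherian Rings}). Since $A_m(\Gamma) = \Gamma \otimes_K A_m(K)$ is built from $\Gamma$ by adjoining the central-over-$K$ variables $X_i,\partial_i$ with the Heisenberg relations, no genuinely new phenomena arise, and the proof is a routine, if slightly bookkeeping-heavy, adaptation of the field case. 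I would simply cite the relevant statements from a standard reference for non-commutative Noetherian rings and spell out the filtration so the reader can see $\operatorname{gr}^F A_m(\Gamma) \cong \Gamma[x_1,\dots,x_m,y_1,\dots,y_m]$ with all new variables central.
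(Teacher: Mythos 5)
The paper itself offers no proof of this lemma; it is simply labeled ``well-known'' and cited as such, so there is no argument in the text to compare your proposal against. Your proof is the standard filtration argument and it is correct: equip $A_1(\Gamma)$ (or directly $A_m(\Gamma)$) with the Bernstein filtration by total degree in the $X_i$'s and $\partial_i$'s, observe that the Heisenberg relations $[\partial_i,X_i]=1$ drop into filtration degree $0$ so the associated graded ring is the (genuinely commutative over the center, central-variable) polynomial ring $\Gamma[x_1,\dots,x_m,y_1,\dots,y_m]$, apply the non-commutative Hilbert basis theorem for a central variable $m$ times, and then invoke the lifting lemma for positively filtered rings (McConnell--Robson, \emph{Noncommutative Noetherian Rings}, Theorem 1.6.9, or Bj\"ork, Chapter 1). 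Both ingredients are valid over an arbitrary non-commutative left or right Noetherian base, as you note. The iterated reduction $A_m(\Gamma)\cong A_1(A_{m-1}(\Gamma))$ is also correct since $A_1(K)\otimes_K A_{m-1}(K)\cong A_m(K)$ and the new pair $X_m,\partial_m$ commutes with $A_{m-1}(\Gamma)$, though, as you also observe, the filtration argument can be run once for all $m$ simultaneously, making the induction a matter of taste rather than necessity. The one small thing worth making explicit in a written-out version is that $\Gamma$ is implicitly assumed to be a $K$-algebra (the paper always applies the lemma with $\Gamma$ a $K$-algebra, and $A_m(\Gamma)=\Gamma\otimes_K A_m(K)$ requires this), which is exactly what guarantees that the new generators are central over $\Gamma$, not merely over $K$.
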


\begin{proof}
As $\Lambda$ is left 
%{\rm(}resp. right{\rm)} 
Noetherian so by \cite[Theorem 9]{Mc}, a non-commutative version of Hilbert basis theorem, we get that $A_1(\Lambda)$ is left Noetherian. 
%In \cite[p. 8]{lam1}, it is observed that 
%Now, the higher Weyl algebra can be defined inductively as $A_n(\Lambda)=A_1(A_{n-1}(\Lambda))$. By induction, the result follows.
Since $A_m(\Lambda)=A_1(A_{m-1}(\Lambda))$, the result follows by induction.
\end{proof}

We recall the following result from \cite{TP2} which says that the class of generalized Eulerian modules is closed under extension.

\begin{lemma}\cite[Proposition 3.11]{TP2}\label{sesge}
Let $0 \rt M_1 \rt M_2 \rt M_3 \rt 0$ be a short exact sequence of $A_m(A)$-modules. Then $M_2$ is generalized Eulerian if and only if $M_1$ and $M_3$ are generalized Eulerian. 
\end{lemma}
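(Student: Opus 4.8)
The plan is to prove the two implications separately, relying only on the definition of generalized Eulerian together with the single structural fact that the operator $\e_m - n$ is homogeneous of degree $0$ and therefore maps the degree-$n$ component $(E)_n$ of any graded $A_m(A)$-module $E$ into itself. Throughout I treat the given short exact sequence as a sequence of graded $A_m(A)$-modules with homogeneous (degree $0$) maps, which is the setting in which the statement is meant.

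For the implication ``$M_2$ generalized Eulerian $\Rightarrow M_1$ and $M_3$ generalized Eulerian'' I would argue as follows. A homogeneous $e \in M_1$ of degree $n$ is, through the inclusion $M_1 \hookrightarrow M_2$, also homogeneous of degree $n$ in $M_2$; the hypothesis on $M_2$ then gives $a$ with $(\e_m - n)^a e = 0$, proving $M_1$ is generalized Eulerian. For $M_3$, given $\bar e \in M_3$ homogeneous of degree $n$, I would use that $(M_2)_n \to (M_3)_n$ is surjective to choose a homogeneous lift $e \in (M_2)_n$, pick $a$ with $(\e_m - n)^a e = 0$, and apply the quotient map to get $(\e_m - n)^a \bar e = 0$.

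For the converse, suppose $M_1$ and $M_3$ are generalized Eulerian and let $e \in M_2$ be homogeneous of degree $n$. Its image $\bar e \in M_3$ is homogeneous of degree $n$, so there is $b \geq 1$ with $(\e_m - n)^b \bar e = 0$, i.e. $z := (\e_m - n)^b e$ lies in $M_1$. Since $\e_m - n$ preserves the degree-$n$ strand, $z \in (M_2)_n$, hence $z \in (M_1)_n$; in particular $z$ is a homogeneous element of $M_1$ of degree $n$. Now the hypothesis on $M_1$ furnishes $c \geq 1$ with $(\e_m - n)^c z = 0$, that is $(\e_m - n)^{b+c} e = 0$, so $M_2$ is generalized Eulerian.

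The argument is short, and the main (really the only) point demanding care is the degree bookkeeping: one must verify that $z = (\e_m - n)^b e$ is genuinely a homogeneous element of $M_1$ of degree $n$, so that the generalized Eulerian condition for $M_1$ applies to it with the exponent attached to degree $n$, and that homogeneous elements of $M_3$ admit homogeneous lifts of the same degree to $M_2$. Both follow immediately from $\deg \e_m = 0$ together with the maps in the sequence being graded of degree $0$; no finiteness hypothesis on the modules and no use of $\chars K = 0$ is needed for this particular lemma.
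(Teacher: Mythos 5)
Your argument is correct. Note that the paper under review does not itself prove this lemma---it states it as a citation to \cite[Proposition~3.11]{TP2}---so there is no in-text proof to compare against; but the route you take (the forward direction by restriction to the submodule and by lifting a homogeneous element of the quotient, and the converse by composing the exponent $b$ furnished by $M_3$ with the exponent $c$ furnished by $M_1$ to get $(\e_m-n)^{b+c}e=0$) is the standard and essentially forced one, and your bookkeeping that $\e_m-n$ is a degree-$0$ operator so that $z=(\e_m-n)^b e$ lands in $(M_1)_n$ is exactly the point one must check.
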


We first develop some basic properties that are known for $A_m(K)$-modules. 
%\textcolor{red}{
All these features will be used extensively in Section 7 to obtain the rigidity property.
%We extensively use all these features to obtain the rigidity property in Section 6.
%see \cite[Proposition 3.2, Proposition 3.5]{TPJS} and \cite[Proposition 5.3, Proposition 5.5]{TPJS}. 
%We observe that 
To prove \cite[Proposition 3.2, Proposition 3.5]{TP6}  and \cite[Proposition 5.3, Proposition 5.5]{TPJS}, the authors nowhere used the fact that $K$ is a field. Therefore, we obtain our result using the same line of proofs given in \cite{TP6} and \cite{TPJS}. 
%For convenience of the reader, 
To provide the readers some idea, we repeat the 
%proofs here. 
proof of Proposition \ref{vc}.
%For readers convenience, we cite them below without their proofs.
\begin{proposition}\label{pe}
Let $M$ be a generalized Eulerian $A_m(A)$-module. Then the $A_{m-1}(A)$-module $H_\nu(\partial_m; M)(-1)$ is generalized Eulerian for $\nu=0, 1$.
\end{proposition}

\begin{proposition}\label{ve}
Let $M$ be a generalized Eulerian $A_m(A)$-module. Then the $A_{m-1}(A)$-module $H_\nu(X_m; M)$ is generalized Eulerian for $\nu=0, 1$.	
\end{proposition}

%The following two results are crucial.
\begin{proposition}\label{pc}
Let $M$ be a generalized Eulerian $A_1(A)$-module. Then for $\nu=0, 1$, the module $H_\nu(\partial_1; M)$ is concentrated in degree $-1$, i.e., $H_\nu(\partial_1; M)_n=0$ for $n \neq -1$.
\end{proposition}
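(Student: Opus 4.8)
The plan is to reduce at once to the $m=1$ instance of Proposition \ref{pe} and then exploit the fact that the Euler operator on $A_0(A)=A$ is the zero operator, so that the generalized Eulerian condition over $A$ collapses into a statement that, in characteristic zero, can only hold in degree $0$. Concretely, I would first apply Proposition \ref{pe} with $m=1$: since $A_{m-1}(A)=A_0(A)=A$, this yields that for $l=0,1$ the shifted Koszul homology $H_l(\partial_1;M)(-1)$ is a generalized Eulerian $A$-module (it is a graded subquotient of $M$, hence graded).

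Next I would record the elementary observation that a generalized Eulerian $A$-module $N$ is concentrated in degree $0$. Indeed $\e_0=\sum_{i=1}^{0}X_i\partial_i$ is the empty sum, so $\e_0=0$; hence for a homogeneous $e\in N_r$ the generalized Eulerian condition $(\e_0-r)^{a}e=0$ reads $(-r)^{a}e=0$, i.e. $r^{a}e=0$ for some $a\geq 1$. If $r\neq 0$ then $r$ is a nonzero integer, hence a unit in the characteristic-zero field $K\subseteq A$, so $r^{a}$ is a unit in $A$ and $e=0$. Applying this to $N=H_l(\partial_1;M)(-1)$ gives $H_l(\partial_1;M)(-1)_r=0$ for all $r\neq 0$; since $H_l(\partial_1;M)(-1)_r=H_l(\partial_1;M)_{r-1}$, this is exactly the claim that $H_l(\partial_1;M)_j=0$ for all $j\neq -1$.

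There is essentially no serious obstacle: the only points requiring care are the degree-shift bookkeeping (the statement concerns $H_l(\partial_1;M)$, while Proposition \ref{pe} supplies information about the shift $H_l(\partial_1;M)(-1)$) and the observation that the $(m-1)=0$ case of the Euler operator vanishes, which is precisely what upgrades ``generalized Eulerian'' to ``concentrated in degree $0$'' once characteristic zero is used. If one preferred not to quote Proposition \ref{pe}, the same conclusion follows by a direct expansion of $(\e_1-r)^{a}$ on a homogeneous cycle $\xi\in\ker\partial_1$ (using $\partial_1\xi=0$, so all higher powers of $\e_1=X_1\partial_1$ kill $\xi$ and one is left with $(-r)^{a}\xi=0$), and on a homogeneous representative $\eta$ of a class in $M/\partial_1 M$ (using $\e_1=\partial_1X_1-1$, so modulo $\partial_1 M$ one is left with $(-(r+1))^{a}\,\overline{\eta}=0$); but routing through Proposition \ref{pe} keeps the proof shortest.
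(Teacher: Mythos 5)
Your proof is correct, and your primary route is a genuinely different organization from the paper's, although you also sketch the paper's argument at the end. The paper proves the proposition by a direct computation with $\e_1 = X_1\partial_1$: for a homogeneous nonzero $\xi \in H_1(\partial_1;M)(-1)$ it expands $(X_1\partial_1-|\xi|)^a$ as $(*)\partial_1 + (-1)^a|\xi|^a$ and uses $\partial_1\xi=0$ to conclude $|\xi|=0$, and for a class $\xi'$ of degree $r$ in $H_0(\partial_1;M)$ it rewrites $X_1\partial_1-r$ as $\partial_1X_1-(r+1)$ and reads off $(-1)^b(r+1)^b\xi'=0$ modulo $\partial_1 M$ to get $r=-1$. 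Your route instead invokes Proposition \ref{pe} at $m=1$ (whose statement and proof indeed place no lower bound on $m$) to learn that $H_l(\partial_1;M)(-1)$ is generalized Eulerian over $A_0(A)=A$, and then records the base observation that the Euler operator on $A_0(A)$ is the empty sum, so in characteristic zero a generalized Eulerian graded $A$-module must be concentrated in degree $0$ (a nonzero integer degree $r$ is a unit in $K\subseteq A$, so $r^a e=0$ forces $e=0$). This factors the paper's single computation into the already-proved inductive step plus the degenerate $m=0$ case, and it makes the $(-1)$-shift in Proposition \ref{pe} the visible source of the ``$-1$'' in the conclusion. Both arguments are short and correct; yours is slightly more conceptual while the paper's is self-contained and does not rely on \ref{pe}. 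The degree-shift bookkeeping you flag ($H_l(\partial_1;M)(-1)_r=H_l(\partial_1;M)_{r-1}$) is handled correctly and matches the paper's convention. There are no gaps.
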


\begin{proposition}\label{vc}
Let $M$ be a generalized Eulerian $A_1(A)$-module. Then for $\nu=0, 1$, the module $H_\nu(X_1; M)$ is concentrated in degree $0$, i.e., $H_\nu(X_1; M)_n=0$ for $n \neq 0$.
\end{proposition}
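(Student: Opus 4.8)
The plan is to imitate the proof of Proposition~\ref{pc}, interchanging the roles of $X_1$ and $\partial_1$ and using the commutation relation $\partial_1 X_1 - X_1\partial_1 = 1$ to push the surviving factor of $X_1$ to the side where it acts as zero. First I would write down the exact sequence
\[0 \rt H_1(X_1; M) \rt M(-1) \overset{X_1}{\lrt} M \rt H_0(X_1; M)\rt 0,\]
note that it is a sequence of $A$-modules since $X_1$ commutes with $A$, and identify $H_1(X_1; M) = \ker(X_1 \colon M \to M)$ (sitting inside $M(-1)$) together with $H_0(X_1; M) = M/X_1 M$.

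For $H_1(X_1; M)$, let $\eta \in H_1(X_1; M)_r$ be homogeneous and nonzero; since $H_1(X_1;M) \subseteq M(-1)$, its degree as an element of $M$ is $r-1$, so the generalized Eulerian property gives $(X_1\partial_1 - (r-1))^b \eta = 0$ for some $b \geq 1$. Substituting $X_1\partial_1 = \partial_1 X_1 - 1$, this becomes $(\partial_1 X_1 - r)^b \eta = 0$. Expanding the binomial power, every monomial $(\partial_1 X_1)^k$ with $k \geq 1$ has a trailing factor $X_1$, so $(\partial_1 X_1 - r)^b = (\ast) X_1 + (-1)^b r^b$. As $X_1\eta = 0$ we obtain $(-1)^b r^b \eta = 0$; since $\mathbb{Q}\subseteq A$, nonzero integers are units in $A$, so $\eta \neq 0$ forces $r = 0$. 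Hence $H_1(X_1; M)$ is concentrated in degree $0$.

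For $H_0(X_1; M)$, let $\eta' \in H_0(X_1; M)_r$ be homogeneous and nonzero, say $\eta' = \beta + X_1 M$ with $\beta \in M_r$. The generalized Eulerian property gives $(X_1\partial_1 - r)^a \beta = 0$ for some $a \geq 1$. Now every monomial $(X_1\partial_1)^k$ with $k \geq 1$ begins with $X_1$, so $(X_1\partial_1 - r)^a = X_1(\ast) + (-1)^a r^a$. Reducing modulo $X_1 M$ gives $(-1)^a r^a \eta' = 0$, and again $\eta' \neq 0$ forces $r = 0$, so $H_0(X_1; M)$ is concentrated in degree $0$.

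I do not expect a genuine obstacle here: the computation is, if anything, slightly easier than the one in Proposition~\ref{pc}, and the only points needing care are the degree shift in $M(-1)$ (so that the scalar appearing after rewriting is exactly $r$ and not $r\pm 1$) and keeping track of on which side the leftover factor of $X_1$ sits. As an alternative route one could instead invoke Proposition~\ref{ve} with $m=1$, which says $H_l(X_1; M)$ is a generalized Eulerian $A_0(A)=A$-module; since $\mathbb{Q}\subseteq A$, a generalized Eulerian $A$-module has every nonzero homogeneous element in degree $0$, i.e.\ is concentrated in degree $0$.
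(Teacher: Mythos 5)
Your main argument is correct and is essentially the paper's own proof: both use the four-term exact sequence for $X_1$, the commutation relation $\partial_1 X_1 - X_1\partial_1 = 1$ to rewrite the Euler operator so the leftover $X_1$ lands where it annihilates, and the fact that nonzero integers are units since $\QQ\subseteq A$. The only cosmetic difference is bookkeeping: the paper works with $\eta\in H_1(X_1;M)(1)\subseteq M$ and concludes $|\eta|=-1$, while you track the degree $r$ in $H_1(X_1;M)$ itself (so $\eta\in M_{r-1}$) and conclude $r=0$; these are the same statement. Your alternative route via Proposition~\ref{ve} at $m=1$ is also sound and arguably the cleaner conceptual explanation --- once you note that $A_0(A)=A$, $\e_0=0$, and a generalized Eulerian $A$-module must be concentrated in degree $0$ over a $\QQ$-algebra --- though the paper chooses to give the direct computation instead.
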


\begin{proof}
As the map $M(-1) \overset{ \cdot X_1}{\lrt} M$ is $A$-linear so $H_i(X_1; M)$ are $A$-modules for $i=0, 1$. Thus we have an exact sequence of $A$-modules 
\[0 \rt H_1(X_1; M) \rt M(-1) \overset{\cdot X_1}{\lrt} M \rt H_0(X_1; M)\rt 0.\]
Let $\eta \in H_1(X_1; M)(1) \subseteq M$ be non-zero and homogeneous of degree $u$. Since $M$ is generalized Eulerian, we have 
\[(X_1 \partial_1-u)^b \eta=0 \quad \text{for some } b \geq 1.\]
Using the relation $\partial_1X_1-X_1 \partial_1= 1$, we can write 
\[0= (\partial_1X_1-(u+1))^b\eta= (*)X_1\eta + (-1)^b(u+1)^b\eta,\] 
where by $(*)$ we mean $\sum_{i=0}^{b-1}(-1)^i\binom{b}{i}(u+1)^i(\partial_1X_1)^{b-i-1}\partial_1$. As $X_1 \eta=0$ and $\eta \neq 0$ so we get $u=-1$, that is, $\eta \in H_1(X_1; M)(1)_{-1}= H_1(X_1; M)_0$. Hence $H_1(X_1; M)$ is concentrated in degree $0$.
	
Let $\eta' \in H_0(X_1; M)$ be non-zero and homogeneous of degree $v$. Therefore $\eta' \in (M/X_1M)_v$ and hence $\eta'= \beta+X_1M$ for some $\beta \in M_v$. Since $M$ is generalized Eulerian, we have 
\[(X_1 \partial_1-v)^a \beta=0 \quad \text{for some } a \geq 1.\] 
Notice that $(X_1 \partial_1-v)^a= X_1 \cdot (\star)+(-1)^av^a$. Thus 
\[X_1 \cdot (*)\beta+(-1)^av^a\beta=0,\] 
%In $M/X_1M$, we have 
where $(\star)$ stands for $\sum_{j=0}^{a-1}\binom{a}{i}(-1)^iv^i \partial_1(X_1\partial_1)^{a-j-1}$. Going mod $X_1M$ we get $(-1)^av^a\eta'=0$. As $\eta' \neq 0$ so $v=0$, i.e., $\eta' \in H_0(X_1; M)_0$. Hence $H_0(X_1; M)$ is concentrated in degree $0$.
\end{proof}

\section{Some properties of \texorpdfstring{$A_m(A)$}{Am(A)}-modules}
Throughout this section, we write $X:=X_1, \ldots, X_m$ and $\partial:=\partial_1, \ldots, \partial_m$.

%We prove that some properties of $A_m(K)$-modules can be extended to $A_m(A)$-modules. 
The following result is similar to \cite[Theorem 6.2]{BJ}. Note that in \cite[Theorem 6.2]{BJ}, $M$ is assumed to be holonomic. 

\begin{lemma}\label{surjective}
	Let $M$ be an $A_m(A)$-module. Set $N_1= \{y \in M\mid\partial_m^t y=0$ for some $t\geq 1\}$ and $N_2= \{y \in M\mid X_m^t y=0$ for some $t\geq 1\}$. Then 
	\begin{enumerate}[\rm (1)]
		\item $N_1$ is an $A_m(A)$-submodule of $M$.
		\item $N_1= \partial_m N_1$.
		\item $N_2$ is an $A_m(A)$-submodule of $M$.
		\item $N_2= X_m N_2$.
	\end{enumerate}
\end{lemma}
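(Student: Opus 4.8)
The plan is to treat (1) and (3) by a direct check that $N_1$ and $N_2$ are stable under left multiplication by the $A$-algebra generators $X_1,\dots,X_m,\partial_1,\dots,\partial_m$ of $A_m(A)$, and to treat (2) and (4) by exhibiting an explicit ``antiderivative''. The only ingredient beyond the Weyl commutation relations is that $\mathbb{Q}\subseteq A$, and this is exactly what lets us avoid the holonomicity hypothesis of \cite[Theorem 6.2]{BJ}: no finiteness condition on $M$ is used anywhere.

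For (1): $N_1$ is plainly an additive subgroup of $M$; it is stable under $A$ (which is central in $A_m(A)$) and under each $\partial_i$ and each $X_i$ with $i\neq m$, since all of these commute with $\partial_m$. So the only point to verify is that $X_m N_1\subseteq N_1$, and for this I would use the identity $\partial_m^{\,n}X_m=X_m\partial_m^{\,n}+n\,\partial_m^{\,n-1}$, an easy induction on $n$ from $[\partial_m,X_m]=1$: if $\partial_m^{\,t}a=0$ then $\partial_m^{\,t+1}(X_m a)=X_m\partial_m^{\,t+1}a+(t+1)\partial_m^{\,t}a=0$, so $X_m a\in N_1$. This settles (1). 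Part (3) is proved identically, replacing that identity by its mirror $X_m^{\,n}\partial_m=\partial_m X_m^{\,n}-n\,X_m^{\,n-1}$.

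For (2): the inclusion $\partial_m N_1\subseteq N_1$ is immediate from (1). For the reverse, given $a\in N_1$ with $\partial_m^{\,t}a=0$ I would set
\[
b=\sum_{k=1}^{t}\frac{(-1)^{k-1}}{k!}\,X_m^{\,k}\partial_m^{\,k-1}a ,
\]
a finite sum lying in $N_1$ by (1). Abbreviating $u_j=X_m^{\,j}\partial_m^{\,j}a$ (so $u_j=0$ for $j\geq t$) and using $\partial_m\bigl(X_m^{\,k}\partial_m^{\,k-1}a\bigr)=u_k+k\,u_{k-1}$, one checks that the rational coefficients have been chosen precisely so that the sum telescopes to $\partial_m b=u_0=a$; hence $a\in\partial_m N_1$. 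Part (4) is the mirror image: for $a\in N_2$ with $X_m^{\,t}a=0$ take $b=-\sum_{k=1}^{t}\tfrac{1}{k!}\,\partial_m^{\,k}X_m^{\,k-1}a\in N_2$ and verify $X_m b=a$ by the analogous telescoping, now based on $X_m\bigl(\partial_m^{\,k}X_m^{\,k-1}a\bigr)=\partial_m^{\,k}X_m^{\,k}a-k\,\partial_m^{\,k-1}X_m^{\,k-1}a$.

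I do not expect a conceptual obstacle here; the only place that needs care is keeping track of which commutation identity to invoke where, and confirming that the chosen coefficients genuinely collapse the telescoping sums — a short computation done once for (2) and once, symmetrically, for (4).
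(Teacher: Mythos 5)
Your proof is correct. For (1) and (3) you take the same route as the paper: check stability of $N_i$ under the Weyl generators, the only non-automatic step being $\partial_m^{\,n}X_m = X_m\partial_m^{\,n} + n\partial_m^{\,n-1}$ (resp.\ its mirror). For (2) and (4), however, you take a genuinely different route. The paper proves $N_1\subseteq\partial_m N_1$ by induction on the nilpotency order $s$: from $\partial_m^{\,s}b=0$ it deduces $\partial_m^{\,s-1}(\partial_m X_m b - sb)=0$, applies the induction hypothesis to $\partial_m X_m b - sb$, and then divides by $s$ (using $\mathbb{Q}\subseteq A$). You instead exhibit a closed-form antiderivative
$b=\sum_{k=1}^{t}\frac{(-1)^{k-1}}{k!}X_m^{\,k}\partial_m^{\,k-1}a$
and verify directly that $\partial_m b=a$; with $u_j=X_m^{\,j}\partial_m^{\,j}a$ the identity $\partial_m X_m^{\,k}=X_m^{\,k}\partial_m+kX_m^{\,k-1}$ gives $\partial_m(X_m^{\,k}\partial_m^{\,k-1}a)=u_k+ku_{k-1}$, and the alternating $1/k!$ coefficients make the sum telescope to $u_0-(-1)^{t}u_t/t!$, with $u_t=0$; the check that $b\in N_1$ follows from (1) since each $\partial_m^{\,k-1}a$ lies in $N_1$. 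The mirror computation handles (4). Both approaches require $\mathbb{Q}\subseteq A$ essentially (division by $s$ in the paper, the $1/k!$ coefficients in yours); your formula avoids the induction and makes the underlying Poincar\'e-lemma-style cancellation explicit, at the modest cost of a slightly longer one-time bookkeeping check, while the paper's induction is shorter to state but less transparent about why a preimage exists.
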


\begin{proof}
(1) Let $y \in N_1$. Then $\partial_m^sy =0$ for some $s \geq 1$. Since $\partial_i \partial_j=\partial_j\partial_i$ for all $i, j$ and $X_i \partial_j= \partial_j X_i$ for all $i \neq j$, we get that $\partial_m^s\partial_iy = \partial_i\partial_m^sy =0$ for all $i$ and $\partial_m^sX_iy = X_i\partial_m^sy =0$ for all $i \neq m$. Thus $\partial_iy \in N_1$ for all $i$, and $X_iy \in N_1$ for all $i \neq m$. Moreover, it is easy to check that $\partial_m^\alpha X_m= X_m \partial_m^\alpha+\alpha\partial_m^{\alpha-1}$ for all $\alpha \geq 1$. Therefore, 
\[\partial_m^{s+1}X_my= X_m \partial_m^{s+1}y+(s+1)\partial_m^sy=0,\] 
and hence $X_my \in N_1$. It follows that $N_1$ is an $A_m(A)$-submodule of $M$.
	
(2) Clearly $\partial_m N_1 \subseteq N_1$. Let $y \in N_1$. Then $\partial_m^sy =0$ for some $s \geq 1$. We use induction on $s$ to show $N_1 \subseteq \partial_m N_1$. If $s=1$, then $\partial_m(X_my)=X_m(\partial_m y)+ y= y$. As $X_m N_1 \subseteq N_1$ so we get $y \in \partial_m N_1$. We now assume that $s \geq 2$, and the result is true for $s-1$. If $\partial_m^sy =0$, then $\partial_m^{s}X_my =X_m \partial_m^{s}y+s\partial_m^{s-1}y=s\partial_m^{s-1}y$. Thus
\[\partial_m^{s}X_my-s\partial_m^{s-1}y=\partial_m^{s-1}(\partial_mX_my-sy)=0.\] 
So $\partial_mX_my-sy \in N_1$, and hence by induction hypothesis $\partial_mX_my-sy \in \partial_m N_1$. Since $\partial_mX_my \in \partial_m N_1$, we get $sy \in \partial_m N_1$. Therefore, $y \in \partial_m N_1$ and the result follows. 

\vspace{0.15cm}
%Proofs of (3) and (4) are similar to (1) and (2), respectively. So we skip the proofs here. 
%
We skip the proofs of (3) and (4) since they can be obtained following 
%same steps as in 
the same line of proofs given for (1) and (2), respectively. 
\end{proof}

\s \label{Fou_trans}{\it Fourier transform:} Let $\FF: A_m(A) \to A_m(A)$ be an automorphism 
%of $A_m(A)$ 
defined by $\FF(\partial_i)=-X_i$ and $\FF(X_i)= \partial_i$ for $i=1, \ldots, m$. Clearly $\FF(\underline{X}\underline{\partial})=\FF(\underline{X})\FF(\underline{\partial})$, where $\underline{X}:=X_1 \cdots X_m$ and $\underline{\partial}:=\partial_1 \cdots \partial_m$. Notice that \[\FF(1)= \FF(\partial_iX_i-X_i\partial_i)= \FF(\partial_i)\FF(X_i)-\FF(X_i)\FF(\partial_i)= -X_i\partial_i+\partial_i X_i= 1,\] 
and the restriction map $\FF|_R: R:=A[X_1, \ldots, X_m] \to S:=A[\partial_1, \ldots, \partial_m]$
%(restriction map) 
is an isomorphism. For any left $A_m(A)$-module $M$, let $M^\FF$ denote a new module such that (i) $M^\FF=M$ as abelian groups, and (ii) the action of $R$ on $M^\FF$ is defined by 
\[r \cdot y= \FF(r)y \quad \mbox{ for all } r \in R \mbox{ and } y \in M^\FF.\] 
The new module $M^{\FF}$ is called the {\it Fourier transform} of $M$. Similarly we define 
\[s*z= \FF^{-1}(s)z \quad \mbox{ for all } s \in S\mbox{ and } z \in M^{\FF^{-1}}.\] 
%Now for any $s \in S$ and $m \in (M^\FF)^{\FF^{-1}}$ we have 
%Notice that $s*z= \FF^{-1}(s)\cdot z= \FF(\FF^{-1}(s))z=sz$ for all $s \in S$ and all $z \in (M^\FF)^{\FF^{-1}}$. It follows that $(M^\FF)^{\FF^{-1}}= M$. 
Notice that $r \cdot y= \FF(r) * y=\FF^{-1}\left(\FF(r)\right)y=ry$ for all $r \in R$, and all $y \in (M^{\FF^{-1}})^\FF$. It follows that 
$(M^{\FF^{-1}})^\FF=M$. Similarly, one can check that $(M^\FF)^{\FF^{-1}}= M$. 

Let $B$ be a commutative Noetherian ring, $I$ be an ideal in $B$, and let $L$ be a $B$-module. Set $\Gamma_I(L)= \bigcup_{t \in \N} (0:_L I^t)$, the submodule of $L$ consisting of all elements of $L$ which are annihilated by some power of $I$.
\begin{lemma}\label{sub-gamma}
Let $A$ be a commutative Noetherian ring containing a field of characteristic zero. Let $R=A[X_1, \ldots, X_m]$ and $S=A[\partial_1, \ldots, \partial_m]$. Let $I$ and $J$ be homogeneous ideals in $R$ and $S$ respectively. If $M$ is an $A_m(A)$-module, then $\Gamma_I(M)$ and $\Gamma_J(M)$ are graded $A_m(A)$-modules.
\end{lemma}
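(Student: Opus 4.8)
The plan is to prove that $\Gamma_I(M)$ is an $A_m(A)$-submodule of $M$ by checking stability under each ring generator of $A_m(A)$. It is immediate that $\Gamma_I(M)$ is an $R$-submodule of $M$: if $I^n a = 0$ and $I^{n'}b = 0$ then $I^{\max(n,n')}(a+b)=0$, and $I^n(ra)=r(I^n a)=0$ for $r\in R$. In particular $\Gamma_I(M)$ is an additive subgroup stable under $A$ and under each $X_i$. Since $A_m(A)$ is generated as a ring by $A$, the $X_i$ and the $\partial_i$, it therefore suffices to show $\partial_j\bigl(\Gamma_I(M)\bigr)\subseteq\Gamma_I(M)$ for every $j$; for $\Gamma_J(M)$ I will afterwards reduce to this case via the Fourier transform $\FF$.

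So fix $a\in\Gamma_I(M)$ with $I^n a = 0$; I claim $I^{n+1}(\partial_j a)=0$. As $I^{n+1}$ is the $R$-span of products $h=g_1\cdots g_{n+1}$ with each $g_k\in I$, it is enough to show $h\cdot\partial_j a = 0$ for every such $h$. The main tool is the Leibniz relation in $A_m(A)$: from $\partial_j g = g\partial_j + \partial g/\partial X_j$ for $g\in R$ one obtains
\[
\partial_j\cdot h \;=\; h\,\partial_j \;+\; \sum_{k=1}^{n+1}\Bigl(\prod_{l\neq k}g_l\Bigr)\frac{\partial g_k}{\partial X_j}\qquad\text{in }A_m(A).
\]
Applying both sides to $a$ and noting that the left-hand side is $\partial_j(ha)=0$ (because $h\in I^{n+1}\subseteq I^n$ annihilates $a$) gives
\[
h\,\partial_j a \;=\; -\sum_{k=1}^{n+1}\Bigl(\prod_{l\neq k}g_l\Bigr)\frac{\partial g_k}{\partial X_j}\,a .
\]
Each summand vanishes: $\prod_{l\neq k}g_l\in I^n$ and $\partial g_k/\partial X_j\in R$ lie in the commutative ring $R$, so $\bigl(\prod_{l\neq k}g_l\bigr)\frac{\partial g_k}{\partial X_j}a = \frac{\partial g_k}{\partial X_j}\bigl(\prod_{l\neq k}g_l\bigr)a = 0$ since $\prod_{l\neq k}g_l\in I^n$ and $I^n a = 0$. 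Hence $h\,\partial_j a = 0$, so $\partial_j a\in\Gamma_I(M)$, and $\Gamma_I(M)$ is an $A_m(A)$-submodule of $M$.

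For $\Gamma_J(M)$ with $J\subseteq S = A[\partial_1,\dots,\partial_m]$ homogeneous, put $I := \FF^{-1}(J)$. Because $\FF^{-1}$ restricts to a ring isomorphism $S\to R$ that reverses the $\mathbb{Z}$-grading, $I$ is a homogeneous ideal of $R$, and $\FF(I^n)=J^n$ for all $n$. Passing to the Fourier transform $M^{\FF}$ (on which $A_m(A)$ acts by $u\cdot v:=\FF(u)v$), one checks that $v\in\Gamma_I(M^{\FF})$ if and only if $J^n v = 0$ in $M$ for some $n$; that is, $\Gamma_I(M^{\FF})=\Gamma_J(M)$ as subsets of the common underlying abelian group. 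By the case already proved, $\Gamma_I(M^{\FF})$ is an $A_m(A)$-submodule of $M^{\FF}$, and since $\FF$ is an automorphism an $A_m(A)$-submodule of $M^{\FF}$ is exactly an $A_m(A)$-submodule of $M$. Hence $\Gamma_J(M)$ is an $A_m(A)$-submodule of $M$.

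I do not anticipate a genuine obstacle: the argument is bookkeeping with the Weyl relations. The points to watch are that $\partial_j\cdot h = h\partial_j + \partial h/\partial X_j$ is an identity in the noncommutative ring $A_m(A)$, whereas the cancellation $\bigl(\prod_{l\neq k}g_l\bigr)(\partial g_k/\partial X_j)a = 0$ takes place in $M$ regarded as a module over the commutative ring $R$; and, in the Fourier step, merely recording that $\FF$ interchanges $R$ and $S$ (up to grading reversal and signs) and hence $\Gamma_J$ with $\Gamma_I$. The only mild inefficiency is that one gets $I^{n+1}(\partial_j a)=0$ rather than $I^n(\partial_j a)=0$, which is immaterial for membership in $\Gamma_I(M)$.
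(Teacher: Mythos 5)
Your proof is correct and follows essentially the same route as the paper: show $\Gamma_I(M)$ is stable under $R$ trivially, use the Leibniz identity $\partial_j h = h\partial_j + \partial h/\partial X_j$ together with commutativity of $R$ to get stability under each $\partial_j$, and then transport the first case across the Fourier transform to handle $\Gamma_J(M)$. If anything your write-up is slightly cleaner than the paper's, since the paper asserts a general inclusion $I^{l+1}\partial_i u \subseteq I^l u$ whose stated proof implicitly relies on $u\in\Gamma_I(M)$, whereas you work directly with a fixed $a$ satisfying $I^n a=0$ and so avoid that imprecision.
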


\begin{proof}
%Clearly $R$ is a 
Notice that $R$ and $S$ are commutative rings contained in the left Noetherian ring $A_m(A)$. Let $I$ be a homogeneous ideal in $R$.
	
\begin{claim*}
%$I^{l+1} \partial_iy \subseteq \partial_i I^{l+1}y+I^{l}y$ for all $1 \leq i \leq n;~ l \geq 0$ and for any $y \in M$.
For any $y \in M$,
\[I^{t+1} \partial_iy \subseteq \partial_i I^{t+1}y+I^ty \quad \mbox{ for } i=1, \ldots, m \mbox{ and all } t \geq 0.\]
\end{claim*}
	
We know that $I^{t+1}=\langle g_1 \cdots g_{t+1} \mid g_j \in I\rangle$. Fix $i$ and set $\underline{g}:=g_1 \cdots g_{t+1}$. Since $\partial_i\underline{g}= \underline{g}\partial_i+\partial_i(\underline{g})$, 
%$\partial_ig_1 \cdots g_{t+1}= g_1 \cdots g_{t+1}\partial_i+\partial_i(g_1 \cdots g_{t+1})$, 
we have $\underline{g}\partial_iy= \partial_i\underline{g}y- \partial_i(\underline{g})y$.
%$g_1 \cdots g_{t+1}\partial_iy= \partial_ig_1 \cdots g_{t+1}y- \partial_i(g_1 \cdots g_{t+1})y$. 
Using the chain rule, we get that 
\[\partial_i(\underline{g})=\partial_i(g_1 \cdots g_{t+1})= \sum_{j=1}^{t+1}g_1 \cdots\partial_i(g_j)\cdots g_{t+1}.\] 
Thus 
%$g_1 \cdots g_{t+1}
$\underline{g}\partial_iy \in \partial_i I^{t+1}y+I^ty$. The claim follows.
	
Let $y \in \Gamma_I(M)$. Then $I^sy=0$ for some $s \geq 0$. From the above claim we get $I^{s+1}\partial_iy \subseteq \partial_i I^{s+1}y+I^{s}y=0$. Therefore $\partial_iy \in \Gamma_I(M)$ and hence $\partial_i \Gamma_I(M) \subseteq \Gamma_I(M)$ for all $i$. Moreover, we have $A\Gamma_I(M) \subseteq \Gamma_I(M)$ and $X_i \Gamma_I(M) \subseteq \Gamma_I(M)$ for all $i$. It follows that $\Gamma_I(M)$ is an $A_m(A)$-submodule of $M$.
	
Note that $\Gamma_{\FF(I)}(M^\FF)= \{y \in M^\FF\mid\FF(I)^sy=0 \text{ for some } s \geq 0\}$. As $\FF$ is a ring homomorphism so we get that $\FF(I)^ty= \FF(I^t)y= I^t \cdot y$ for all $t\geq 0$. Therefore, $\Gamma_I(M)^\FF= \Gamma_{\FF(I)}(M^\FF)$. Let $J$ be a homogeneous ideal in $S$. Set $I:=\FF^{-1}(J)$. It is easy to check that $I$ is a homogeneous ideal in $R$, and $\FF(I)= \FF(\FF^{-1}(J))= J$. Since $M^{\FF}$ is an $A_m(A)$-module, by the above observation we have $\Gamma_I(M^{\FF^{-1}}) \subseteq_{A_m(A)} M^{\FF^{-1}}$. Applying $\FF$ on both sides, we get that $\Gamma_I(M^{\FF^{-1}})^\FF \subseteq_{A_m(A)} (M^{\FF^{-1}})^\FF$, 
%that is,  
i.e., $\Gamma_{\FF(I)}\left((M^{\FF^{-1}})^\FF\right) \subseteq_{A_m(A)} M$ and hence $\Gamma_{J}(M) \subseteq_{A_m(A)} M$.
%Now we have $\Gamma_I(M^\FF) \subseteq_{A_m(A)} M^\FF$, i.e., $\Gamma_{\FF(J)}(M^\FF) \subseteq_{A_m(A)} M^\FF$. Taking $\FF^{-1}$ both side we get $\Gamma_{J}(M) \subseteq_{A_m(A)} M$.
\end{proof}

\begin{remark}\label{Gsub} Since $(X_1, \ldots, X_m)$ and $(\partial_1, \ldots, \partial_m)$ are homogeneous ideals in $R$ and $S$ respectively, by the above lemma it follows that $\Gamma_{(X)}(M)$ and $\Gamma_{(\partial)}(M)$ are $A_m(A)$-modules.
\end{remark}

Now we prove the main result of this section.
\begin{proposition}\label{GTAM}
Let $M$ be a generalized Eulerian $A_m(A)$-module. Then 
\begin{enumerate}[\rm(1)]
\item $\Gamma_{(X_1, \ldots, X_m)}(M)_n=0$ for all $n \geq -m+1$.
\item $\Gamma_{(\partial_1, \ldots, \partial_m)}(M)_n=0$ for all $n \leq -m$.
\end{enumerate}
\end{proposition}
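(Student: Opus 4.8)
The plan is to deduce both parts directly from the definition of a generalized Eulerian module together with the Weyl relation $X_i\partial_i = \partial_iX_i - 1$, by writing the torsion submodule as an increasing union of annihilators and inducting on the exponent. (An alternative is to induct on $m$ using Propositions~\ref{vc} and~\ref{pe}, or~\ref{pc} and~\ref{ve}, but the direct argument is shorter and self-contained.)

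For (1), set $L_n = (0 :_M (X_1,\ldots,X_m)^n)$, so that $\Gamma_{(X_1,\ldots,X_m)}(M) = \bigcup_{n\geq 1} L_n$ and each $L_n$ is graded (being the annihilator of a homogeneous ideal). I will prove by induction on $n$ that $L_n$ is concentrated in degrees $\leq -m$; taking the union then gives (1). Base case $n=1$: let $e\in L_1$ be nonzero and homogeneous, so $X_ie = 0$ for all $i$. Using $X_i\partial_i = \partial_iX_i - 1$, rewrite $\e_m = \sum_{i=1}^m X_i\partial_i = \sum_{i=1}^m \partial_iX_i - m$, whence $\e_m e = \sum_i \partial_i(X_ie) - me = -me$. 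Therefore $(\e_m - |e|)^a e = (-m-|e|)^a e$ for every $a\geq 1$; since $M$ is generalized Eulerian the left-hand side is $0$ for some $a$, and as $K$ has characteristic zero this forces $|e| = -m$. Inductive step: if $e\in L_n$ is nonzero and homogeneous, then for each $i$ we have $X_ie\in L_{n-1}$ (because $(X_1,\ldots,X_m)^{n-1}X_i\subseteq (X_1,\ldots,X_m)^n$ inside $R$); if $X_ie\neq 0$ for some $i$ then $|e|+1 = |X_ie|\leq -m$ by the inductive hypothesis, while if $X_ie = 0$ for all $i$ then $e\in L_1$ and $|e| = -m$. In either case $|e|\leq -m$, completing the induction.

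Part (2) is the mirror of this with each $X_i$ replaced by $\partial_i$. The only genuinely different point is the base case: if $e$ is nonzero homogeneous with $\partial_ie = 0$ for all $i$, then $\e_m e = \sum_i X_i(\partial_ie) = 0$ outright, so $(\e_m - |e|)^a e = (-|e|)^a e$, and generalized Eulerianness forces $|e| = 0$; the induction then shows each $(0:_M(\partial_1,\ldots,\partial_m)^n)$ is concentrated in degrees $\geq 0$, so $\Gamma_{(\partial_1,\ldots,\partial_m)}(M)$ is concentrated in degrees $\geq 0$ — in particular it vanishes in all degrees $\leq -m$ (indeed in every negative degree). I do not expect a substantive obstacle here: the only places needing care are the commutator rewriting of $\e_m$ in the two base cases and the elementary containment $(X_1,\ldots,X_m)^{n-1}X_i\subseteq (X_1,\ldots,X_m)^n$ (and its analogue for the $\partial_i$) that makes the induction go through.
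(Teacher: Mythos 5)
Your proof is correct, and it takes a genuinely different route from the paper's. The paper proves this proposition by induction on $m$, combining Lemma~\ref{surjective} (which gives surjectivity of $X_m$, resp.\ $\partial_m$, on the torsion submodule) with Propositions~\ref{ve}, \ref{vc}, \ref{pe}, \ref{pc} on Koszul homology of generalized Eulerian modules, so that each inductive step passes to $H_1(X_m;\,-)$ or $H_1(\partial_m;\,-)$ over $A_{m-1}(A)$. Your proof instead fixes $m$ and inducts on the exponent $n$ in $L_n = (0:_M(X_1,\ldots,X_m)^n)$, reducing to the socle case $L_1$, where the Weyl relation $\sum X_i\partial_i = \sum\partial_iX_i - m$ makes $\e_m$ act as the scalar $-m$ (resp.\ $0$ in the dual case), so the generalized Eulerian condition pins down the degree. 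This is shorter and entirely self-contained: it bypasses the whole apparatus of Sections~3--4 on Koszul homology and avoids the careful bookkeeping of grading shifts that the paper's induction on $m$ requires (and where, incidentally, the paper's write-up of the inductive step for part~(2) cites Proposition~\ref{ve} where \ref{pe} is meant, and loses track of the twist $(-1)$). A further bonus of your argument: for part (2) it actually delivers the sharper statement that $\Gamma_{(\partial_1,\ldots,\partial_m)}(M)$ is concentrated in degrees $\geq 0$, not merely $\geq -m+1$; the asymmetry with part~(1) (where $\leq -m$ is indeed optimal, as $H^m_{(\underline{X})}(R)$ shows) is exactly accounted for by the $-m$ produced in the commutator rewrite. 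The only point one should make explicit, which you leave implicit, is that since $\Q\subseteq K\subseteq A$, a nonzero integer is a unit in $A_m(A)$, so $(-(m+|e|))^a e = 0$ with $e\neq 0$ genuinely forces $|e|=-m$.
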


\begin{proof}
(1) We proceed by induction on $m$. Let $m=1$. From Lemma \ref{surjective} we have $\Gamma_{(X_1)}(M)= X_1 \Gamma_{(X_1)}(M)$, i.e., the map $\Gamma_{(X_1)}(M)(-1) \overset{ \cdot X_1}{\lrt} \Gamma_{(X_1)}(M)$ is surjective. Thus we get a short exact sequence 
\begin{equation}\label{g1}
0 \to L \to \Gamma_{(X_1)}(M)(-1) \overset{\cdot X_1}{\lrt} \Gamma_{(X_1)}(M) \to 0.
\end{equation}
Since $M$ is a generalized Eulerian $A_1(A)$-module, by Lemma \ref{sesge} and Remark \ref{Gsub},
%we get that 
$\Gamma_{(X_1)}(M)$ is a generalized Eulerian $A_1(A)$-module. Therefore, $L= H_1(X_1; \Gamma_{(X_1)}(M))$ is concentrated in degree $0$ by Proposition \ref{vc}. From \eqref{g1} it follows that $0 \to \Gamma_{(X_1)}(M)_{n-1} \overset{\cdot X_1}{\lrt} \Gamma_{(X_1)}(M)_n$ for all $n \neq 0$. Thus 
\[\Gamma_{(X_1)}(M)_0 \overset{\cdot X_1}{\hookrightarrow} \Gamma_{(X_1)}(M)_1\overset{\cdot X_1}{\hookrightarrow} \Gamma_{(X_1)}(M)_2 \overset{\cdot X_1}{\hookrightarrow} \Gamma_{(X_1)}(M)_3 \overset{\cdot X_1}{\hookrightarrow} \cdots.\] 
Let $y \in \Gamma_{(X_1)}(M)_n$ for some $n \geq 0$. Then $X_1^t y=0$ for some $t \geq 1$. As $\Gamma_{(X_1)}(M)_n \hookrightarrow \Gamma_{(X_1)}(M)_{n+t}$ so we get $y=0$ and hence $\Gamma_{(X_1)}(M)_n=0$ for all $n \geq 0$.
	
We now assume the result is true for $m-1$. Since $\Gamma_{(X)}(M) \subseteq_{A_m(A)} M$ by Lemma \ref{sub-gamma}, we have $\Gamma_{(X)}(M)$ is generalized Eulerian $A_m(A)$-module. Set $N= \{y \in \Gamma_{(X)}(M)\mid X_m^t y=0 ~\text{for some } t\geq 1\}$. 
%As $(X_m) \subseteq (X)$, so we have 
Clearly $N= \Gamma_{(X)}(M)$. 
%By Lemma \ref{surjective}(1), we get 
Now $N= X_m N$ by Lemma \ref{surjective}(1), and hence the map $\Gamma_{(X)}(M)(-1)\overset{\cdot X_m}{\lrt} \Gamma_{(X)}(M)$ is surjective. Thus we get a short exact sequence 
\begin{equation}\label{g2}
0 \to V \to \Gamma_{(X)}(M)(-1) \overset{X_m}{\lrt} \Gamma_{(X)}(M) \to 0.
\end{equation}
By Proposition \ref{ve} we have $V= H_1(X_m; \Gamma_{(X)}(M))$ is a generalized Eulerian $A_{m-1}(A)$-module. Notice that $V= \Gamma_{(X_1, \ldots, X_{m-1})}(V)$. By induction hypothesis it follows that $V_n=0$ for all $n \geq -(m-1)+1=-m+2$. From \eqref{g2} we get 
\[\Gamma_{(X)}(M)_{-m+1} \overset{\cdot X_m}{\hookrightarrow} \Gamma_{(X)}(M)_{-m+2}\overset{\cdot X_m}{\hookrightarrow} \Gamma_{(X)}(M)_{-m+3} \overset{\cdot X_m}{\hookrightarrow} \cdots.\]
Let $y \in \Gamma_{(X)}(M)_{-m+w}$ for some $w \geq 1$. Then $X_m^t y=0$ for some $t\geq1$. From the map $\Gamma_{(X)}(M)_{-m+w} \hookrightarrow \Gamma_{(X)}(M)_{-m+w+t}$, we get $y=0$ and hence $\Gamma_{(X)}(M)_n=0$ for all $n \geq -m+1$.

The proof of (2) is similar to (1). 
One needs 
%We have 
to use Lemma \ref{surjective}, Propositions \ref{pe}, and \ref{pc}.
\end{proof}

\section{Countable generation and local cohomology modules}

Throughout this section, $B$ is a commutative Noetherian ring. 
%In this section, 
We discuss some properties of a countably generated module, which we will use in the 
%forthcoming 
following section. We further show that the local cohomology module $H^i_I(B)$ is a countably generated $B$-module for any ideal $I \subseteq B$ and each $i \geq 0$.

\begin{definition}
We say a $B$-module $L$ is countably generated if there exists a countable set of elements which generates $L$ as a $B$-module.
\end{definition}

The following result is well-known, but we do not have a reference.
%We give a proof for the convenience of the reader.
\begin{lemma}\label{scg}
Let $0 \to L_1 \overset{f}{\lrt} L_2 \overset{g}{\lrt} L_3 \to 0$ be a short exact sequence of $B$-modules. Then $L_2$ is countably generated if and only if $L_1$ and $L_3$ are countably generated.
\end{lemma}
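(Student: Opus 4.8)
The plan is to treat the two implications separately; the forward implication is elementary bookkeeping, while the reverse implication rests on one structural fact about modules over a Noetherian ring.

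For the ``if'' direction, suppose $X_1 = \sum_{i \in \N} B a_i$ and $X_3 = \sum_{j \in \N} B \bar c_j$. Choose $c_j \in X_2$ with $g(c_j) = \bar c_j$. I claim the countable set $\{f(a_i)\}_{i} \cup \{c_j\}_{j}$ generates $X_2$. Given $x \in X_2$, write $g(x) = \sum_j r_j \bar c_j$ as a finite sum; then $x - \sum_j r_j c_j \in \ker g = \im f$, so it equals $f(y)$ for some $y \in X_1$, and expressing $y = \sum_i s_i a_i$ as a finite sum yields $x = \sum_i s_i f(a_i) + \sum_j r_j c_j$. Hence $X_2$ is countably generated.

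For the ``only if'' direction, suppose $X_2 = \sum_{k \in \N} B d_k$. Since $g$ is surjective, $\{g(d_k)\}_k$ generates $X_3$, so $X_3$ is countably generated. For $X_1 \cong \im f = \ker g$, I reduce to the assertion that every $B$-submodule of a countably generated $B$-module is countably generated. To see this, let $N \subseteq M = \sum_{k \in \N} B d_k$ and set $M_n = \sum_{k=1}^{n} B d_k$. Each $M_n$ is finitely generated, hence Noetherian because $B$ is Noetherian, so $N \cap M_n$ is finitely generated. As $M = \bigcup_n M_n$, we get $N = \bigcup_n (N \cap M_n)$, and the union of the finite generating sets of the $N \cap M_n$ is a countable generating set for $N$. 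Applying this with $M = X_2$ and $N = \ker g \cong X_1$ finishes the argument.

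The only nontrivial point is the submodule statement in the last paragraph, which is precisely where the Noetherian hypothesis on $B$ enters; everything else is routine diagram chasing.
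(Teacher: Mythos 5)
Your proof is correct and follows essentially the same approach as the paper's: for the reverse direction, both lift generators of $X_3$ and combine them with generators of $X_1$ via diagram chasing, and for the forward direction, both realize $X_2$ as an increasing union of finitely generated submodules and intersect with $X_1$, using the Noetherian hypothesis to deduce each intersection is finitely generated. The only cosmetic difference is that you isolate the submodule statement as a stand-alone fact, whereas the paper inlines it.
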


\begin{proof}
We may assume that $f$ is inclusion and $g$ is the quotient map. Let $L_2$ be countably generated. So we have $L_2= \bigcup_{i \in \N} D_i$, where $D_i$'s are finitely generated $B$-modules and $D_0 \subseteq D_1 \subseteq D_2 \cdots$. Since $f(L_1)=L_1$ is a submodule of $L_2$, we get $L_1= L_2 \cap L_1= \bigcup_{i \in \N} (D_i \cap L_1)$. Moreover, $B$ is a Noetherian ring and $D_i \cap L_1$ is a submodule of $D_i$. Therefore $D_i \cap L_1$ is finitely generated and hence $L_1$ is countably generated. As $L_3 \cong L_2/L_1$ as $B$-modules so we get that $L_3$ is countably generated with same generating set as of $L_2$.
	
Conversely, let $L_1$ and $L_3$ be generated by $\{\alpha_i\}_{i \geq 1}$ and $\{\gamma_i\}_{i \geq 1}$ respectively. Let $\beta_i \in L_2$ be such that $g(\beta_i)=\overline{\beta_i}= \gamma_i$. 
	
\begin{claim*}
$L_2$ is generated by $\{\alpha_1, \ldots, \beta_1, \ldots\}$.
\end{claim*} 
	
Let $z \in L_2$. Then $g(z)= \sum b_i \gamma_i$ where $b_i=0$ for all but finitely many $i$. Set $z^*=\sum b_i \beta_i$. Then $g(z)=g(z^*)$ and hence $z-z^* \in \ker g = L_1$. So $z-z^*= \sum a_i \alpha_i$ where $a_i=0$ for all but finitely many $i$. Thus $z= \sum b_i \beta_i+ \sum a_i \alpha_i$. The claim follows. 
\end{proof}

\s \label{van_tam_sa} Let $A$ be a commutative Noetherian ring containing a field $K$ of characteristic zero. Let $R=A[X_1, \ldots, X_m]$ and $S=A[\partial_1, \ldots, \partial_m]$ be polynomial rings. Let $A_m(A)$ be the $m^{th}$ Weyl algebra over $A$. We consider $R$, $S$, and $A_m(A)$ as graded with $\deg z=0$ for all $z \in A$, $\deg X_i=1$ and $\deg \partial_i=-1$.
% for $i=1, \ldots, n$. 
\begin{lemma}\label{vpcg}
Let $M$ be an $A_m(A)$-module. The following assertions are equivalent:
\begin{enumerate}[\rm (1)]
\item $M$ is a countably generated $A_m(A)$-module.
\item $M$ is a countably generated $R$-module.
\item $M$ is a countably generated $S$-module.
\end{enumerate}
\end{lemma}  

\begin{proof} 
Notice that $R$ and $S$ are sub-rings of $A_m(A)$. So the implications $(2) \implies (1)$ and $(3) \implies (1)$ follow trivially. We now prove $(1) \implies (3)$. 
	
Set $D:=A_m(A)$. Let $M$ be a countably generated $D$-module. Therefore $M= \bigcup_{i \in \N} M_i$ where $M_i$'s are finitely generated $D$-modules, and $M_0 \subseteq M_1 \subseteq M_2 \cdots$.  Since $M_i$ is a finitely generated $D$-module, there exists a surjctive map $D^{t_i} \to M_i \to 0$ for some $t_i \geq 0$. Moreover, $D=\bigcup_{i \in \N} D_i$ as a $S$-module, where $D_i$ is a left (resp. right) $S$-module generated by all monomials in $X_1, \ldots, X_m$ of degree less than or equal to $i$. Clearly $D_i$'s are finitely generated left (resp. right) $S$-modules, and $D_0 \subseteq D_1 \subseteq D_2 \cdots$. Thus $D$ is a countably generated left (resp. right) $S$-module and so is $D^{t_i}$. By Lemma \ref{scg} we get that $M_i$ is a countably generated $S$-module for all $i$. Hence $M$ is a countably generated $S$-module. 
	
To prove the implication $(1) \implies (2)$, we only need to replace `$S$' by `$R$' and `$X_i$' by `$\partial_i$' in the above proof. 
\end{proof}	

%Let $B$ be a commutative Noetherian ring. 
The following result is 
%definitely known. 
well-known. 
%We give a proof for the convenience of the reader.
Due to the lack of reference, we provide a proof here.

\begin{lemma}\label{cg}
	Let ${\bf Y}: 0 \to Y^0 \overset{d^0}{\lrt} Y^1 \overset{d^1}{\lrt} Y^2 \overset{d^2}{\lrt} \cdots$ be a co-chain complex of countably generated $B$-modules. Then $H^i({\bf Y})$ is countably generated for all $i$.
\end{lemma}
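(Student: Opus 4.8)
The plan is to reduce everything to two facts: first, that a submodule of a countably generated module over a Noetherian ring is again countably generated (this was just proved inside Lemma \ref{scg}, where the argument writes $X_1 = \bigcup_i (D_i \cap X_1)$ with each $D_i$ finitely generated and $D_i \cap X_1$ finitely generated by Noetherianness), and second, that a quotient of a countably generated module is countably generated (also observed in Lemma \ref{scg}). Given the complex $\mathbf{X}$, for each $i$ the cohomology $H^i(\mathbf{X}) = \ker d^i / \operatorname{image} d^{i-1}$.

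First I would observe that $\ker d^i$ is a $B$-submodule of the countably generated module $X^i$, hence countably generated by the submodule principle extracted from Lemma \ref{scg}. Next, $\operatorname{image} d^{i-1}$ is a quotient of the countably generated module $X^{i-1}$, hence countably generated; alternatively it is a submodule of $X^i$, so again countably generated directly. Since $\operatorname{image} d^{i-1} \subseteq \ker d^i$ (because $\mathbf{X}$ is a complex), the cohomology $H^i(\mathbf{X})$ is a quotient of the countably generated module $\ker d^i$, and therefore countably generated. For $i = 0$ one reads $H^0(\mathbf{X}) = \ker d^0$, which is a submodule of $X^0$ and the same reasoning applies.

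I do not expect any genuine obstacle here: the entire content is bookkeeping around the two observations already present in the proof of Lemma \ref{scg}. If one wants to be fully self-contained, one could invoke Lemma \ref{scg} directly on the short exact sequences $0 \to \ker d^i \to X^i \to \operatorname{image} d^i \to 0$ and $0 \to \operatorname{image} d^{i-1} \to \ker d^i \to H^i(\mathbf{X}) \to 0$: the first gives that $\ker d^i$ is countably generated (it is the $X_1$ in that sequence, and $X^i$ plays the role of $X_2$), and then the second sequence has its $X_1 = \operatorname{image} d^{i-1}$ countably generated (being a submodule of $X^i$, or a quotient of $X^{i-1}$) and $X_2 = \ker d^i$ countably generated, so by the ``only if'' direction of Lemma \ref{scg} the quotient $X_3 = H^i(\mathbf{X})$ is countably generated. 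The only mild subtlety worth a sentence is the boundary case $i=0$, handled by the convention $X^{-1} = 0$, so that $\operatorname{image} d^{-1} = 0$ and $H^0(\mathbf{X}) = \ker d^0$.
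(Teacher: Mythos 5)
Your proposal is correct and follows exactly the same line of reasoning as the paper's proof: invoke Lemma \ref{scg} to see that $\ker d^i$, as a submodule of $X^i$, is countably generated, and then again to conclude that the quotient $H^i(\mathbf{X}) = \ker d^i / \operatorname{image} d^{i-1}$ is countably generated. The paper states this in two sentences; you have merely spelled out the same steps in more detail and added the (correct but unnecessary) aside about the case $i=0$.
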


\begin{proof}
	Notice that $\ker d^i$ is a submodule of $Y^i$. Since $Y^i$ is countably generated, by Lemma \ref{scg} we get that $\ker d^i$ is countably generated. From Lemma \ref{scg}, it follows that $H^i({\bf Y})= \ker d^i/\im d^{i-1}$ is countably generated. 
\end{proof}

\begin{lemma}\label{cglc}
	Let $I$ be an ideal in $B$. Let $L$ be a countably generated $B$-module. Then $H_I^i(L)$ is countably generated.
\end{lemma}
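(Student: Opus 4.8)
The plan is to compute local cohomology via the \v{C}ech complex and then invoke the preservation of countable generation under taking cohomology of a complex. First I would fix a finite generating set $f_1, \ldots, f_s$ of the ideal $I$ (this exists since $B$ is Noetherian) and recall that $H_I^i(M)$ is the $i^{\text{th}}$ cohomology of the \v{C}ech complex $\check{C}^\bullet(f_1, \ldots, f_s; M)$, whose terms are finite direct sums of localizations $M_{f_{j_1} \cdots f_{j_k}}$. So the whole problem reduces to two facts: (a) a localization $M_f$ of a countably generated $B$-module $M$ is countably generated over $B$, and (b) a finite direct sum of countably generated modules is countably generated; combined with Lemma \ref{cg}, these immediately give the result.

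The step (b) is completely trivial (union of finitely many countable sets is countable), so the only real content is step (a). For that I would argue as follows: if $\{m_n\}_{n \in \N}$ generates $M$ over $B$, then $M_f$ is generated over $B$ by the countable set $\{m_n / f^k : n \in \N, \ k \in \N\}$. Indeed any element of $M_f$ has the form $x/f^k$ with $x \in M$, and writing $x = \sum b_i m_{n_i}$ as a finite $B$-linear combination gives $x/f^k = \sum b_i (m_{n_i}/f^k)$, a finite $B$-linear combination of elements of the proposed generating set. A countable indexing set $\N \times \N$ is countable, so $M_f$ is countably generated over $B$. (Note one does not even need $B$ Noetherian for this step, but one does need it to guarantee $I$ is finitely generated so that the \v{C}ech complex has finitely many terms in each degree.)

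Assembling: the \v{C}ech complex $\check{C}^\bullet(f_1, \ldots, f_s; M)$ is a co-chain complex whose degree-$k$ term is $\bigoplus_{1 \le j_1 < \cdots < j_k \le s} M_{f_{j_1} \cdots f_{j_k}}$, a finite direct sum of localizations of $M$, hence countably generated over $B$ by (a) and (b). By Lemma \ref{cg}, every cohomology module of this complex is countably generated over $B$. Since $H_I^i(M) \cong H^i(\check{C}^\bullet(f_1, \ldots, f_s; M))$, the conclusion follows. I do not anticipate a genuine obstacle here; the statement is essentially bookkeeping once one has the \v{C}ech description and Lemma \ref{cg} in hand. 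The only thing to be slightly careful about is to state the \v{C}ech complex for the correct ideal: replacing $I$ by an arbitrary finite generating set does not change $H_I^i(M)$, which is the standard fact one should cite.
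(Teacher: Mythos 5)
Your proof is correct and follows essentially the same route as the paper's: both use the \v{C}ech complex on a finite generating set of $I$, reduce to showing a localization $M_f$ of a countably generated module is countably generated via the explicit countable generating set $\{m_n/f^k\}$, and then invoke Lemma~\ref{cg}.
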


\begin{proof}
	Let $I=(f_1, \ldots, f_s)$. Consider the \v Cech complex \[{\bf C}: 0 \to L \to \bigoplus_{i=1}^s L_{f_i} \to \cdots \to L_{f_1 \cdots f_s} \to 0.\] 
	
	\begin{claim*}
		If $N$ is a countably generated $B$-module, then so is $N_f$ 
		%is countably generated 
		for any $f \in B$.
	\end{claim*} 
	
	Clearly $N_f= \bigcup_{i \geq 1} N_{1/f^i}$. Choose a countable generating set $W$ of $N$. Then $W_{1/f^i}:= \{z/f^i\mid z \in W\}$ is a generating set of $N_{1/f^i}$ as a $B$-module and hence $\bigcup_{i \geq 1} W_{1/f^i}$ is a generating set of $N_f$. As $W$ is a countable set, so is $W_{1/f^i}$. Thus $\bigcup_{i \geq 1} W_{1/f^i}$ is a countable set. It follows that $N_f$ is countably generated.
	
	Since $L$ is countably generated, ${\bf C}$ is a complex of countably generated $C$-modules. By Lemma \ref{cg}, it follows that $H^i_I(L)= H^i({\bf C})$ is countably generated.
\end{proof}

\begin{lemma}\label{lyucg}
	Let $\TT$ be a Lyubeznik functor on $\Mod(B)$. Then $\TT(L)$ is a countably generated $B$-module for any countably generated $B$-module $L$.
\end{lemma}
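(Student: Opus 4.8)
The plan is to prove Lemma \ref{lyucg} by induction on the number of functors $\TT_j$ in the composition $\TT = \TT_1 \circ \cdots \circ \TT_k$ defining the Lyubeznik functor. The base case $k=0$ (so $\TT$ is the identity) is trivial, and for the inductive step it suffices to handle a single functor of each of the allowed types and then apply the induction hypothesis to $\TT_2 \circ \cdots \circ \TT_k$, whose output is countably generated by hypothesis, so we only need to check that applying one more elementary functor preserves countable generation.

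First I would dispose of the case $\TT_1 = H^i_Y(-)$ for a locally closed set $Y = Y_2 - Y_1$. By the long exact sequence \eqref{lyu} relating $H^i_{Y_1}$, $H^i_{Y_2}$ and $H^i_Y$, together with Lemma \ref{scg} (countable generation passes across short exact sequences), it is enough to know that $H^i_Z(N)$ is countably generated for $Z$ a closed subset of $\Spec(B)$ and $N$ countably generated. But a closed subset is $Z = V(J)$ for some ideal $J$, and $H^i_Z(N) = H^i_J(N)$, which is countably generated by Lemma \ref{cglc}. Then for $Y = Y_2 - Y_1$, the module $H^i_Y(N)$ fits into exact sequences built from the $H^i_{Y_j}(N)$, all countably generated, so two applications of Lemma \ref{scg} give that $H^i_Y(N)$ is countably generated.

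Next I would handle the remaining types: $\TT_1$ is the kernel, image, or cokernel of one of the arrows $H^i_{Y_1}(-) \to H^i_{Y_2}(-)$, $H^i_{Y_2}(-) \to H^i_Y(-)$, or $H^i_Y(-) \to H^i_{Y_1}^{i+1}(-)$ in \eqref{lyu}. By the previous paragraph all the source and target modules (evaluated at a countably generated module) are countably generated. A kernel is a submodule of a countably generated module over the Noetherian ring $B$, hence countably generated by Lemma \ref{scg}; an image is a quotient of a countably generated module, hence countably generated; and a cokernel is likewise a quotient of a countably generated module. So each of these elementary operations preserves countable generation, which closes the induction.

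The only mild subtlety — not really an obstacle — is bookkeeping: one must check that the long exact sequence \eqref{lyu} is a sequence of functors so that applying it to the countably generated module $\TT_2 \circ \cdots \circ \TT_k(M)$ is legitimate, and that the Noetherian hypothesis on $B$ is exactly what makes ``submodule of countably generated is countably generated'' work (this is already the content of Lemma \ref{scg}). Everything else reduces to Lemmas \ref{scg} and \ref{cglc}, so the proof is short.
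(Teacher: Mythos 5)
Your proof is correct and follows essentially the same route as the paper: both reduce to showing that each elementary functor $\TT_j$ preserves countable generation, using Lemma \ref{cglc} for the closed-subset case, the exact triangle from \eqref{lyu} together with Lemma \ref{scg} for a locally closed subset, and then the observation that kernels, images, and cokernels are submodules or quotients of countably generated modules over the Noetherian ring $B$. Framing the composition argument as an explicit induction on the number of factors is a cosmetic difference; the paper simply composes directly.
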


\begin{proof}
	Let $Y$ be any locally closed subset of $\Spec(B)$ with $Y=Y_2-Y_1$, where $Y_1 \subset Y_2$ are closed subsets of $\Spec(B)$. For any countably generated $B$-module $N$, we have an exact sequence 
	\[H_{Y_2}^i(N) \to H_Y^i(N) \to H_{Y_1}^{i+1}(N).\] 
	From Lemma \ref{cglc}, we get that $H_{Y_2}^i(N)$ and $H_{Y_1}^{i+1}(N)$ are countably generated. 
	%Moreover, $B$ is Noetherian. So 
	By Lemma \ref{scg}, it follows that $H_Y^i(N)$ is countably generated. 
	%Thus the kernel and cokernel of any arrow appearing in the long exact sequence \[\cdots \to H_{Y'}^i(N) \to H_{Y''}^i(N) \to H_Y^i(N) \to H_{Y'}^{i+1}(N)\to \cdots\] 
	%are countably generated. 
	
	Let $\TT=\TT_1 \circ \TT_2 \circ \cdots \circ \TT_r$. 
	%From the above argument, we get that the modules 
	By 
%	Definition \ref{grd_Lyu_def}, 
\ref{LyuDef}, $\TT_j(N)$ is countably generated for each $j$, and any countably generated $B$-module $N$. Notice that $\TT(L)= (\TT_1 \circ \TT_2 \circ \cdots\circ \TT_r)(L)=\TT_1(\cdots(\TT_r(L))\cdots)$. Hence $\TT(L)$ is a countably generated $B$-module. 
	%for any countably generated $C$-module $L$.
\end{proof}

\section{Vanishing and tameness}
In this section, we discuss the vanishing and tameness properties of generalized Eulerian modules.

The following result is well-known, see \cite{Cla}. 
\begin{lemma}\label{coun-un}
%A countable-dimensional vector space over a uncountable field cannot be covered by countably many proper subspaces.
A nonzero finite dimensional vector space over an uncountable field is not a union of countably many proper subspaces.
%is not a countable union of proper subspaces.
\end{lemma}

The following result from \cite{TP4} is important for our study.
\begin{lemma}\label{cass}\cite[Lemma 2.3]{TP4}
Let $L$ be a countably generated $B$-module. Then $\Ass_B L$ is a countable set. 
	%In particular, $\Ass_B \TT(B)$ is a countable set for any Lyubeznik functor $\TT$ on $\Mod(B)$.
\end{lemma}

We also need the following proposition from \cite{TP2} to prove the forthcoming result.
%\begin{proposition}\label{asspre}\cite[Proposition 12.1]{TP2} 
%Let $f: U \to V$ be a homomorphism of commutative Noetherian rings. Let $L$ be a $V$-module. Then \[\Ass_U L= \{P \cap U \mid P \in \Ass_V L\}.\]
%\end{proposition}
\begin{proposition}\label{asspre}\cite[Proposition 12.1]{TP2}
Let $f: B \to C$ be a homomorphism of commutative Noetherian rings. Let $L$ be a $C$-module. Then \[\Ass_B L= \{P \cap B~|~P \in \Ass_C L\}.\]
In particular, if $\Ass_C L$ is a finite set, then so is $\Ass_B L$.
\end{proposition}

We are now in a position to establish the following result under the extra hypothesis that $K$ is uncountable.
\begin{proposition}[with hypothesis as in \ref{van_tam_sa}]\label{inj_map}
Further, assume $K$ is uncountable.
%Let $K$ be uncountable.
% of characteristic zero. 
%Let $A$ be a commutative Noetherian ring containing an uncountable field $K$ of characteristic zero.
Let $M= \bigoplus_{n \in \Z}M_n$ be a countably generated, 
%$R$-module and be a 
generalized Eulerian $A_m(A)$-module. Then there exists some homogeneous element $\eta \in K[X_1, \ldots, X_m]$ of degree $1$ such that $M_n \overset{\cdot\eta}{\rt} M_{n+1}$ is an injective map for all $n \geq -m+1$, and some homogeneous element $\xi \in K[\partial_1, \ldots, \partial_m]$ of degree $-1$ such that $M_n \overset{\cdot\xi}{\rt} M_{n-1}$ is an injective map for all $n \leq -m$.
\end{proposition}

\begin{proof}
By Proposition \ref{GTAM} we have $\Gamma_{(X)}(M)_n=0$ for all $n \geq -m+1$. 
%Moreover, we have a 
From the short exact sequence $0 \to \Gamma_{(X)}(M) \to M \to M/\Gamma_{(X)}(M) \to 0$, we get that 
%Therefore 
$M_n= \left(M/\Gamma_{(X)}(M)\right)_n$ for all $n \geq -m+1$. As $R$ is Noetherian and $(X_1, \ldots, X_m)$ is an ideal in $R$ so by \cite[Lemma 6.2]{TP3} we get 
\begin{equation}\label{ass-QX-LC}
\Ass_{R} \frac{M}{\Gamma_{(X)}(M)}= \{P \in \Ass_{R} M \mid P \nsupseteq (X)\}.
\end{equation}
Set $\overline{M}= M/\Gamma_{(X)}(M)$. Note that if $P \in \Ass_R M$, then $P$ is homogeneous. By Lemmas \ref{vpcg} and \ref{scg}, $\overline{M}$ is a countably generated $R$-module. Therefore, $\Ass_{R} \overline{M}$ is a countable set by Lemma \ref{cass}. Since $C:= K[X_1, \ldots, X_m] \subset R$ is a sub-ring, by Proposition \ref{asspre} we have $\Ass_C \overline{M}= \Ass_R \overline{M} \cap C$. So $\Ass_C \overline{M}$ is also a countable set. Additionally, for each $Q \in \Ass_C \overline{M}$, there exists some $P \in \Ass_R \overline{M}$ such that $Q= P \cap C$.
%and hence $Q \cap C_1 = P \cap C_1$. 
By \eqref{ass-QX-LC}, $P \nsupseteq (X)$ for any $P \in \Ass_R \overline{M}$. Hence $Q \cap C_1 = P \cap C_1 \subsetneq C_1$. As $\overline{M}$ is a graded $C$-module so $Q$ is homogeneous.
%for any $Q \in \Ass_C \overline{M}$. 
We denote $H:= \bigcup_{Q \in \Ass_C \overline{M}} Q \cap C_1$.
Since $K$ is uncountable, $H \subsetneq C_1$ by Lemma \ref{coun-un}. Thus there exists some $\eta \in C_1-H$. Note that $\eta:= b_1 X_1+ \cdots+b_m X_m \in K[X_1, \cdots, X_m]$ is a non-zero divisor on $\overline{M}$, where $b_i \in K$. Consequently, $M_n \overset{\cdot\eta}{\rt} M_{n+1}$ is an injective map for all $n \geq -m+1$.
	
\vspace{0.15cm}
By Proposition \ref{GTAM} we have $\Gamma_{(\partial)}(M)_n=0$ for all $n \leq -m$. From the short exact sequence $0 \to \Gamma_{(\partial)}(M) \to M \to M/\Gamma_{(\partial)}(M) \to 0$, we get that $M_n= \left(M/\Gamma_{(\partial)}(M)\right)_n$ for all $n \leq -m$. As $S$ is Noetherian and $(\partial_1, \ldots, \partial_m)$ is an ideal in $S$ so by \cite[Lemma 6.2]{TP3} we get
\begin{equation}\label{ass-Qpartial-LC}
\Ass_{S} \frac{M}{\Gamma_{(\partial)}(M)}= \{P' \in \Ass_{S} M \mid P' \nsupseteq (\partial)\}.
\end{equation}
Set $\widetilde{M}= M/\Gamma_{(\partial)}(M)$. Note that if $P' \in \Ass_S M$, then $P'$ is homogeneous. Since $M$ is a countably generated $A_m(A)$-module, by Lemma \ref{vpcg}, it is a countably generated $S$-module. From Lemma \ref{scg} it follows that $\widetilde{M}$ is a countably generated $S$-module. Thus $\Ass_{S} \widetilde{M}$ is a countable set by Lemma \ref{cass}. As $T= K[\partial_1, \ldots, \partial_m] \subset S$ is a sub-ring so by Proposition \ref{asspre} we have $\Ass_T \widetilde{M}= \Ass_S \widetilde{M} \cap T$. Therefore, $\Ass_T \widetilde{M}$ is also a countable set. Moreover, for each $Q' \in \Ass_T \widetilde{M}$, there exists some $P' \in \Ass_S \widetilde{M}$ such that $Q'= P' \cap T$.
%and hence $Q \cap C_1 = P \cap C_1$. 
By \eqref{ass-Qpartial-LC}, $P' \nsupseteq (\partial)$ for any $P' \in \Ass_S \widetilde{M}$. Hence $Q' \cap T_{-1} = P' \cap T_{-1} \subsetneq T_{-1}$. As $\overline{M}$ is a graded $T$-module so $Q'$ is homogeneous.
%for any $Q \in \Ass_C \overline{M}$. 
We denote $H':= \bigcup_{Q' \in \Ass_T \widetilde{M}} Q' \cap T_{-1}$.
Since $K$ is uncountable, $H' \subsetneq T_{-1}$. Thus there exists some $\xi \in T_{-1}-H'$. Note that $\xi:= a_1 \partial_1+ \cdots+a_m \partial_m \in K[\partial_1, \ldots, \partial_m]$ is a non-zero divisor on $\widetilde{M}$, where $a_i \in K$. Consequently, $M_n \overset{\cdot\xi}{\rt} M_{n-1}$ is an injective map for all $n \leq -m$.
\end{proof}

As a consequence of Proposition \ref{inj_map} we get the following results.

\begin{corollary}[Vanishing]\label{cgvanish}
%Let $K$ be uncountable. 
{\rm(}with hypothesis as in \ref{van_tam_sa}{\rm)}.
Further, assume K is uncountable.
%Let $A$ be a commutative Noetherian ring containing an uncountable field $K$ of characteristic zero. 
Let $M= \bigoplus_{n \in \Z}M_n$ be a countably generated, 
%$R$-module, and a 
generalized Eulerian $A_m(A)$-module. If $M \neq 0$, then $M_n \neq 0$ for infinitely many  $n\gg 0$ OR $M_n \neq 0$ for infinitely many $n \ll 0$. 
\end{corollary}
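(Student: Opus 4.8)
The statement to prove is Corollary \ref{cgvanish}: under the hypotheses that $K$ is uncountable, $M = \bigoplus_{i \in \Z} M_i$ is a countably generated $R$-module and a generalized Eulerian $A_m(A)$-module, if $M_j = 0$ for all $j \gg 0$ and $M_j = 0$ for all $j \ll 0$, then $M = 0$. I would argue by contraposition: assume $M_j = 0$ for $|j| \gg 0$ and deduce $M = 0$. The plan is to feed $M$ into Proposition \ref{inj}, which supplies two homogeneous elements: $\eta \in K[X_1,\dots,X_m]_1$ with $M_i \xrightarrow{\cdot\eta} M_{i+1}$ injective for all $i \geq -m+1$, and $\xi \in K[\partial_1,\dots,\partial_m]_{-1}$ with $M_i \xrightarrow{\cdot\xi} M_{i-1}$ injective for all $i \leq -m$.

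\textbf{Key steps.} First, using the injective maps $M_i \xrightarrow{\cdot\eta} M_{i+1}$ for $i \geq -m+1$, chain them: for any fixed $i_0 \geq -m+1$ we get an injection $M_{i_0} \hookrightarrow M_{i_0 + N}$ for every $N \geq 0$ (composition of injections is injective, and the source/target indices stay in the allowed range $\geq -m+1$). By hypothesis there is some $N$ with $M_{i_0+N} = 0$, hence $M_{i_0} = 0$. This kills all graded pieces $M_i$ with $i \geq -m+1$. Second, symmetrically, using $M_i \xrightarrow{\cdot\xi} M_{i-1}$ injective for $i \leq -m$, chain downward: $M_{i_0} \hookrightarrow M_{i_0 - N}$ for every $N \geq 0$ when $i_0 \leq -m$, and since $M_{i_0 - N} = 0$ for $N$ large, we get $M_{i_0} = 0$ for all $i_0 \leq -m$. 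The two ranges $\{i \geq -m+1\}$ and $\{i \leq -m\}$ cover all of $\Z$, so every graded component vanishes, i.e. $M = 0$. This contradicts $M \neq 0$, completing the contrapositive.

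\textbf{Main obstacle.} There is essentially no obstacle here — the corollary is a short formal consequence of Proposition \ref{inj}. The only point requiring a moment of care is bookkeeping with the index ranges: one must check that when composing the maps $\cdot\eta$ upward, all intermediate degrees remain $\geq -m+1$ (they do, since we start at $i_0 \geq -m+1$ and only increase), and similarly that the downward chain via $\cdot\xi$ stays in the range $\leq -m$. Once this is noted, the argument is immediate, and the real content has already been established in Proposition \ref{inj} (which in turn rests on Proposition \ref{GTAM} controlling the torsion submodules $\Gamma_{(\underline{\mathbf X})}(M)$ and $\Gamma_{(\underline{\partial})}(M)$, together with the countability of associated primes over the polynomial subrings $K[X_1,\dots,X_m]$ and $K[\partial_1,\dots,\partial_m]$ and the uncountability of $K$). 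I would therefore present this proof in just a few lines, citing Proposition \ref{inj} for the two injective multiplication maps and then running the two chaining arguments.
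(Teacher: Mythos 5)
Your proof is correct and uses essentially the same idea as the paper's: invoke Proposition \ref{inj} for the degree-$1$ and degree-$(-1)$ injective multiplications and chain them. The paper instead locates the largest (resp.\ smallest) index with nonzero component and derives a contradiction from a single application of $\cdot\eta$ (resp.\ $\cdot\xi$); your direct chaining to a vanishing component is an equivalent, if anything slightly cleaner, way to run the same argument.
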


\begin{proof}
Suppose if possible there exists $r \geq -m+1$ such that $M_r \neq 0$ and $M_n =0$ for all $n > r$. By Proposition \ref{inj_map}, there exists some homogeneous element $\eta \in K[X_1, \ldots, X_m]$ of degree $1$ such that the map $M_n \overset{\cdot\eta}{\rt} M_{n+1}$ is injective for all $n \geq -m+1$. In particular, we have an injective map $M_{r} \overset{\cdot\eta}{\rt} M_{r+1}$. Thus $M_{r+1} \neq0$, a contradiction.
	
Next, let if possible there exists $s \leq -m$ such that $M_s \neq 0$ and $M_n =0$ for all $n <s$. By Proposition \ref{inj_map}, there exists some homogeneous element $\xi \in K[\partial_1, \ldots, \partial_m]$ of degree $-1$ such that $M_n \overset{\cdot\xi}{\rt} M_{n-1}$ is an injective map for all $n \leq -m$. In particular, we have an injective map $M_{s} \overset{\cdot\xi}{\rt} M_{s-1}$. Thus $M_{s-1} \neq 0$, a contradiction.
\end{proof}

\begin{corollary}[Tameness]\label{cgtame}
{\rm(}with hypothesis as in \ref{van_tam_sa}{\rm)}.
Further, assume $K$ is uncountable.
%Let $K$ be uncountable. 
%Let $A$ be a commutative Noetherian ring containing an uncountable field $K$ of characteristic zero. 
Let $M= \bigoplus_{n \in \Z}M_n$ be a countably generated, 
%$R$-module and a 
generalized Eulerian $A_m(A)$-module. Then
\begin{align*}
&M_{n_0} \neq 0 \text{ for some } n_0 \geq -m+1 \implies M_{n} \neq 0  \text{ for all } n \geq {n_0},\\
&M_{n_0} \neq 0 \text{ for some } n_0 \leq -m \implies M_{n} \neq 0  \text{ for all } n \leq {n_0}.
\end{align*}
\end{corollary}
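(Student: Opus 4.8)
The plan is to bootstrap directly from Proposition \ref{inj} (equivalently Corollary \ref{vinj}, but since $M$ is only assumed to be a countably generated $R$-module and a generalized Eulerian $A_m(A)$-module, Proposition \ref{inj} is the right reference), which furnishes a homogeneous $\eta \in K[X_1,\ldots,X_m]$ of degree $1$ such that multiplication $M_i \xrightarrow{\cdot\eta} M_{i+1}$ is injective for all $i \geq -m+1$, and a homogeneous $\xi \in K[\partial_1,\ldots,\partial_m]$ of degree $-1$ such that $M_i \xrightarrow{\cdot\xi} M_{i-1}$ is injective for all $i \leq -m$. The point is that injectivity of a map out of a nonzero module forces the target to be nonzero, and the ranges of validity of these injective maps are exactly the ranges appearing in the statement.

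First I would handle the positive direction. Suppose $M_{n_0} \neq 0$ for some $n_0 \geq -m+1$. Then for every $n \geq n_0$ the composite $M_{n_0} \xrightarrow{\cdot\eta} M_{n_0+1} \xrightarrow{\cdot\eta} \cdots \xrightarrow{\cdot\eta} M_{n}$ is a composite of injective maps (each index $i$ involved satisfies $i \geq n_0 \geq -m+1$, so Proposition \ref{inj} applies), hence injective; since its source is nonzero, $M_n \neq 0$. This gives $M_n \neq 0$ for all $n \geq n_0$. Second, the negative direction is symmetric: if $M_{n_0} \neq 0$ for some $n_0 \leq -m$, then for every $n \leq n_0$ the composite $M_{n_0} \xrightarrow{\cdot\xi} M_{n_0-1} \xrightarrow{\cdot\xi} \cdots \xrightarrow{\cdot\xi} M_{n}$ involves only indices $i \leq n_0 \leq -m$, so each factor is injective by Proposition \ref{inj}, the composite is injective, and nonvanishing of the source forces $M_n \neq 0$.

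There is essentially no obstacle here — the corollary is a formal consequence of Proposition \ref{inj}, just as Corollary \ref{cgvanish} was. The only point requiring a word of care is to check that the index ranges match: the upward chain of $\eta$-multiplications stays in the region $i \geq -m+1$ precisely because we start at $n_0 \geq -m+1$, and the downward chain of $\xi$-multiplications stays in $i \leq -m$ precisely because we start at $n_0 \leq -m$; in both cases all intermediate indices remain in the allowed region, so Proposition \ref{inj} may be invoked at each step. Thus the full statement follows.
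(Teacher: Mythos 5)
Your proof is correct and follows essentially the same argument as the paper: invoke Proposition~\ref{inj} to obtain the degree~$1$ element $\eta$ and degree~$-1$ element $\xi$, then iterate the injective multiplication maps to propagate nonvanishing upward from $n_0 \geq -m+1$ and downward from $n_0 \leq -m$. You are also right that Proposition~\ref{inj} is the cleaner reference here, since the hypotheses of Corollary~\ref{cgtame} match it directly, whereas the paper cites Corollary~\ref{vinj}, which is phrased for modules of the form $\TT(R)$.
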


\begin{proof}
By Proposition \ref{inj_map}, there exists some homogeneous element $\eta \in K[X_1, \ldots, X_m]$ of degree one such that $M_n \overset{\eta}{\rt} M_{n+1}$ is an injective map for all $n \geq -m+1$. In particular, for given $n_0 \geq -m+1$ we have an injective map $M_{n_0} \overset{\cdot\eta}{\rt} M_{n_0+1}$ and hence $M_{n_0+1} \neq 0$. Repeating this process we get $M_n \neq 0$ for all $n \geq {n_0}$.
	
Again by Proposition \ref{inj_map}, there exists some homogeneous element $\xi \in K[\partial_1, \ldots, \partial_m]$ of degree $-1$ such that $M_n \overset{\cdot\xi}{\rt} M_{n-1}$ is an injective map for all $n \leq -m$. In particular, for given $n_0 \leq -m$ we have an injective map $M_{n_0} \overset{\cdot \xi}{\rt} M_{n_0-1}$ and hence $M_{n_0-1} \neq 0$. 
%Proceeding similarly 
Repeating this process we get $M_n \neq 0$ for all $n \leq {n_0}$.
\end{proof}

\section{Rigidity}
In this section, we 
%generalize the results \cite[Theorem 6.1, Theorem 6.2]{TP2}. In \cite{TP2}, 
extend the rigidity results, proved in \cite{TP2}, to the wider framework of generalized Eulerian $A_m(A)$-modules. To get \cite[Theorem 6.1, Theorem 6.2]{TP2}, the first author used the techniques of de Rham cohomology and Koszul homology of generalized Eulerian $A_m(K)$-modules. In Section 4, we have showed that
%similar results for generalized Eulerian $A_m(A)$-modules,
generalized Eulerian $A_m(A)$-modules also exhibit similar properties, see Propositions \ref{pe}, 
%Proposition \ref{ve}, Proposition \ref{pc} and Proposition \ref{vc}.
\ref{ve}, \ref{pc} and \ref{vc}.  
%Using these results Theorem \ref{rigid-graded} and Theorem \ref{rigidity} can be proved following exactly the same method used to prove \cite[Theorem 6.1, Theorem 6.2]{TP2}. 
Due to this, Theorem \ref{rigid-graded} and Theorem \ref{rigidity} can be proved following exactly the same method used in \cite{TP2}. 
%For the convenience of the reader, we rewrite the proof here. 
We duplicate the proofs here for convenience of the reader. We begin with the following result.
\begin{theorem}\label{rigid-graded} 
	Let $M= \bigoplus_{n \in \Z} M_n$ be a generalized Eulerian $A_m(A)$-module. Then 
	\begin{enumerate}[\rm(I)]
		\item \Tfae
		\begin{enumerate}[\rm(a)]
			\item $M_n \neq 0$ for infinitely many $n<0$.
			\item There exists $r$ such that $M_n \neq 0$ for all $n \leq r$.
			\item $M_n \neq 0$ for all $n \leq -m$.
			\item $M_s \neq 0$ for some $s \leq -m$.
		\end{enumerate}
		\item \Tfae
		\begin{enumerate}[\rm(a)]
			\item $M_n \neq 0$ for infinitely many $n \geq 0$.
			\item There exists  $s$ such that $M_n \neq 0$ for all $n \geq s$.
			\item $M_n \neq 0$ for all $n \geq 0$.
			\item $M_t \neq 0$ for some $t \geq 0$.
		\end{enumerate}
	\end{enumerate}
\end{theorem}
\begin{proof}
(I) The implications $(c) \implies (b) \implies (a) \implies (d)$ follow trivially. We only have to prove $(d) \implies (c)$. We will do this by induction on $m$. 
	
We first assume $m=1$. Consider the exact sequence 
\begin{equation}\label{tm1}
0 \rt H_1(\partial_1; M)_j \rt M_{j+1} \overset{\partial_1}{\lrt} M_j \rt H_0(\partial_1; M)_j\rt 0.
\end{equation}
Since $M$ is a generalized Eulerian $A_m(A)$-module so by Proposition \ref{pc} we get that $H_\nu(\partial_1; M)$ is concentrated in degree $-1$ for $\nu=0, 1$. Thus by exact sequence \eqref{tm1} it follows that \[M_j \cong M_{-1} \quad \text{for all } j \leq -2.\] Hence $M_j \cong M_{s} \neq 0$ for all $j \leq -1$. 
	
We now assume that $m \geq 2$ and the result is proved for $m-1$. Consider the exact sequence 
\begin{equation}\label{tm2}
0 \rt H_1(\partial_m; M)_j \rt M_{j+1} \overset{\partial_m}{\lrt} M_j \rt H_0(\partial_m; M)_j\rt 0.
\end{equation} 
By Proposition \ref{pe} we get $H_\nu(\partial_m; M)(-1)$ is a generalized Eulerian $A_{m-1}(A)$-module for $\nu=0, 1$. We consider the following three cases:

{\it Case 1:} $H_0(\partial_m; M)(-1)_j \neq 0$ for some $j \leq -m+1$.\\
By the induction hypothesis it follows that $H_0(\partial_m; M)(-1)_j \neq 0$ for all $j \leq -m+1$. Hence $H_0(\partial_m; M)_j \neq 0$ for all $j \leq -m$. So by exact sequence \eqref{tm2} we get that $M_j \neq 0$ for all $j \leq -m$.
	
{\it Case 2:} $H_1(\partial_m; M)(-1)_j \neq 0$ for some $j \leq -m+1$.\\
By the induction hypothesis it follows that $H_1(\partial_m; M)(-1)_j \neq 0$ for all $j \leq -m+1$. Hence $H_1(\partial_m; M)_j \neq 0$ for all $j \leq -m$. So by exact sequence \eqref{tm2} it follows that $M_{j+1} \neq 0$ for all $j \leq -m$ which implies $M_j \neq 0$ for all $j \leq -m+1$.
	
{\it Case 3:} $H_\nu(\partial_m; M)(-1)_j = 0$ for $\nu=0, 1$ and for ALL $j \leq -m+1$.\\
Then we have $H_\nu(\partial_m; M)_j = 0$ for $\nu=0, 1$ and for all $j \leq -m$. By exact sequence \eqref{tm2} it follows that $M_j \cong M_{-m+1}$ for all $j \leq -m+1$. Hence $M_j \cong M_s \neq 0$ for all $j \leq -m$.
	
	\vspace{0.15cm}
	(II) Clearly $(c) \implies (b) \implies (a) \implies (d)$. We only have to prove $(d) \implies (c)$. To do this we use induction on $m$. 
	
	We first assume $m=1$. Consider the exact sequence 
	\begin{equation}\label{tm3}
	0 \rt H_1(X_1; M)_j \rt M_{j-1} \overset{X_1}{\lrt} M_j \rt H_0(X_1; M)_j\rt 0.
	\end{equation}
	Since $M$ is a generalized Eulerian $A_m(A)$-module so by Proposition \ref{vc} we get that $H_\nu(X_1; M)$ is concentrated in degree $0$ for $\nu=0, 1$. Thus by the exact sequence \eqref{tm3} it follows that \[M_j \cong M_0 \quad \text{for all } j \geq 0.\] Hence $M_j \cong M_{t} \neq 0$ for all $j \geq  0$. 
	
	We now assume that $m \geq 2$ and the result is proved for $m-1$. Consider the exact sequence
	\begin{equation}\label{t2}
	0 \rt H_1(X_m; M)_j \rt M_{j-1} \overset{X_m}{\lrt} M_j \rt H_0(X_m; M)_j\rt 0.
	\end{equation}
	By Proposition \ref{ve} we have $H_\nu(X_m; M)$ is a generalized Eulerian $A_{m-1}(A)$-module. We consider the following three cases:
	
	{\it Case 1:} $H_0(X_m; M)_j \neq 0$ for some $j \geq 0$.\\
	By the induction hypothesis we get $H_0(X_m; M)_j \neq 0$ for all $j \geq 0$. So by exact sequence \eqref{t2} it follows that $M_j \neq 0$ for all $j \geq 0$.
	
	{\it Case 2:} $H_1(X_m; M)_j \neq 0$ for some $j \geq 0$.\\
	By the induction hypothesis we get $H_1(X_m; M)_j \neq 0$ for all $j \geq 0$. So by exact sequence \eqref{t2} it follows that $M_{j-1} \neq 0$ for all $j \geq 0$ and hence $M_j \neq 0$ for all $j \geq-1$. 
	
	{\it Case 3:} $H_\nu(X_m; M)_j = 0$ for $\nu=0, 1$ and for ALL $j \geq 0$.\\
	By exact sequence \eqref{t2} we get $M_j \cong M_{-1}$ for all $j \geq 0$. Hence $M_j \cong M_t \neq 0$ for all $j \geq 0$.
\end{proof}

Next, we prove the following rigidity theorem.
\begin{theorem}\label{rigidity}
	Let $M= \bigoplus_{n \in \Z} M_n$ be a generalized Eulerian $A_m(A)$-module. If $m \geq 2$, then the following assertions are equivalent:
	\begin{enumerate}[\rm (i)]
		\item $M_n\neq 0$ for all $n \in \Z$.
		\item There exists $r$ with $-m < r<0$ such that $M_r \neq 0$.
	\end{enumerate}
\end{theorem}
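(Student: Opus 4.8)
The implication (i) $\Rightarrow$ (ii) is immediate: since $m\geq 2$ there is at least one integer $r$ with $-m<r<0$, and $M_r\neq 0$ by (i). The substance is (ii) $\Rightarrow$ (i), which I would prove by induction on $m\geq 2$, using the Koszul-homology machinery of Propositions~\ref{pe} and~\ref{ve} together with Theorem~\ref{rigid1}. For $l=0,1$ abbreviate by $(\star)$ the exact sequence $0\to H_1(\partial_m;M)_j\to M_{j+1}\xrightarrow{\partial_m}M_j\to H_0(\partial_m;M)_j\to 0$ and by $(\star\star)$ the exact sequence $0\to H_1(X_m;M)_j\to M_{j-1}\xrightarrow{X_m}M_j\to H_0(X_m;M)_j\to 0$, and recall that $H_l(\partial_m;M)(-1)$ and $H_l(X_m;M)$ are generalized Eulerian $A_{m-1}(A)$-modules. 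A first reduction: once $M_{-m}\neq 0$ and $M_0\neq 0$ are known, Theorem~\ref{rigid1}(I) and~(II) give $M_n\neq 0$ for all $n\leq -m$ and all $n\geq 0$, so it is enough to prove $M_n\neq 0$ for $-m\leq n\leq 0$.

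\emph{The middle range.} Suppose first that $H_l(\partial_m;M)(-1)_j\neq 0$ for some $l\in\{0,1\}$ and some $j$ with $-m+2\leq j\leq -1$ (a range nonempty precisely when $m\geq 3$). Then $-(m-1)<j<0$, so the inductive hypothesis applied to the generalized Eulerian $A_{m-1}(A)$-module $H_l(\partial_m;M)(-1)$ gives $H_l(\partial_m;M)_n\neq 0$ for all $n$; feeding this into $(\star)$ yields $M_n\neq 0$ for all $n$ and we are done. Otherwise $H_0(\partial_m;M)_k=H_1(\partial_m;M)_k=0$ for every $k\in[-m+1,-2]$, so by $(\star)$ the map $\partial_m\colon M_{k+1}\to M_k$ is an isomorphism for each such $k$; hence $M_{-1}\cong M_{-2}\cong\cdots\cong M_{-m+1}$, and since one of these terms (namely $M_r$) is nonzero, all of $M_{-m+1},\dots,M_{-1}$ are nonzero.

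\emph{The endpoints.} In the ``otherwise'' case it remains to show $M_{-m}\neq 0$ and $M_0\neq 0$. If $M_{-m}=0$, then $(\star)$ at $j=-m$ identifies $H_1(\partial_m;M)_{-m}$ with $\ker(\partial_m\colon M_{-m+1}\to M_{-m})=M_{-m+1}\neq 0$; thus the $A_{m-1}(A)$-module $H_1(\partial_m;M)(-1)$ is nonzero in degree $-m+1=-(m-1)$, hence by Theorem~\ref{rigid1}(I) nonzero in all degrees $\leq -(m-1)$, i.e. $H_1(\partial_m;M)_n\neq 0$ for all $n\leq -m$, and then $(\star)$ forces $M_l\neq 0$ for all $l\leq -m+1$ --- contradicting $M_{-m}=0$. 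Symmetrically, if $M_0=0$ then $(\star\star)$ at $j=0$ gives $H_1(X_m;M)_0=\ker(X_m\colon M_{-1}\to M_0)=M_{-1}\neq 0$, so by Theorem~\ref{rigid1}(II) $H_1(X_m;M)_n\neq 0$ for all $n\geq 0$, whence $M_l\neq 0$ for all $l\geq -1$ --- a contradiction. This closes the induction; the base case $m=2$ is exactly the ``otherwise'' branch (the middle range being empty there), so it relies only on Theorem~\ref{rigid1}.

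\emph{Expected main obstacle.} There is no essentially new idea beyond a careful case analysis, and the delicate point is the bookkeeping of degrees and twists: that $H_l(\partial_m;M)$ is generalized Eulerian only after the shift $(-1)$, and that the degrees produced by $(\star)$ and $(\star\star)$ land precisely where either the inductive hypothesis (degrees strictly between $-(m-1)$ and $0$) or Theorem~\ref{rigid1} (the boundary degrees $-(m-1)$ and $0$) applies. The genuine input making everything run is Propositions~\ref{pe} and~\ref{ve}, which keep the Koszul homologies generalized Eulerian over $A_{m-1}(A)$.
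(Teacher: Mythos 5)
Your proof is correct, and it rests on exactly the same machinery as the paper's: induction on $m$, the Koszul-homology exact sequences for $\partial_m$ and $X_m$, the preservation of the generalized Eulerian property (Propositions~\ref{pe} and~\ref{ve}), and Theorem~\ref{rigid1}. The organization, however, is genuinely different and tighter. The paper treats $m=2$ as a separate base case (its Claims~1 and~2), then for $m\geq 3$ proves, by contradiction, that the two tails are nonzero (Claims~3 and~4) and finally dispatches the middle degrees via a case split $c<r$, $c>r$ with two further sub-cases each. You instead reduce at the outset to the finite window $[-m,0]$, then make a single dichotomy --- does some Koszul homology $H_l(\partial_m;M)(-1)$ survive in the middle range $-(m-1)<j<0$, or not --- which collapses to the ``otherwise'' branch when $m=2$ and so absorbs the base case automatically. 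The ``otherwise'' branch then needs only the chain of isomorphisms $M_{-1}\cong\cdots\cong M_{-m+1}$ plus two short endpoint arguments at $-m$ and $0$. What this buys you is fewer cases, no duplicated base-case analysis, and a cleaner separation between where the inductive hypothesis applies (strictly interior degrees) and where Theorem~\ref{rigid1} applies (boundary degrees $-(m-1)$ and $0$). Two small remarks: in the endpoint argument for $M_{-m}$ you need $M_{-m+1}\neq 0$, which you get from the Case~B chain $M_{-1}\cong\cdots\cong M_{-m+1}$ containing $M_r$; and your $M_0$ argument in fact uses only the already-established $M_{-1}\neq 0$, so it does not depend on the Case~B hypothesis --- worth stating, since it shows the $X_m$ side is used only at that single endpoint.
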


\begin{proof}
	$(i) \implies (ii)$ is clear. We only have to prove $(ii) \implies(i)$. We use induction on $m$.
	
	Let $m= 2$. Then we have $M_{-1}\neq 0$.
	
	\begin{claim}
		$ M_i \neq 0$ for infinitely many $i<0$.
%		$i \leq 0$ {(\color{blue}{$i<0$}?)}.	
	\end{claim} 
	
	Suppose if possible Claim 1 is false. Then by Theorem \ref{rigid-graded} we get $M_i=0$ for all $i \leq -2$. Now we have exact sequence
	\begin{equation}\label{rg1}
	0 \rt H_1(\partial_2; M)_i \rt M_{i+1} \overset{\partial_2}{\lrt} M_i \rt H_0(\partial_2; M)_i\rt 0.
	\end{equation}
	Therefore $H_\nu(\partial_2; M)_i=0$ for all $i \leq -3$ and $\nu=0, 1$. Moreover, by Proposition \ref{pe} we have $H_\nu(\partial_2; M)(-1)$ is a generalized Eulerian $A_{1}(A)$-module. Therefore by Theorem \ref{rigid-graded} it follows that $H_\nu(\partial_2; M)(-1)_i=0$ for all $i \leq-1$ and hence $H_\nu(\partial_2; M)_i=0$ for $i \leq -2$. From the exact sequence \eqref{rg1} for $i=-2$, we get $M_{-1} \cong M_{-2} =0$ which contradicts our hypothesis. So Claim 1 is correct. Thus by Theorem \ref{rigid-graded} we get $M_i \neq 0$ for $i \leq -2$.
	
	\begin{claim}
		$ M_i \neq 0$ for infinitely many $i \geq 0$.
	\end{claim}
	
	Suppose if possible Claim 2 is false. Then by Theorem \ref{rigid-graded} we get $M_i =0$ for all $i \geq 0$. Consider the exact sequence 
	\begin{equation}\label{rg2}
	0 \to H_1(X_2; M)_i \to M_{i-1} \overset{X_2}{\lrt} M_i \to H_0(X_2; M)_i \to 0.
	\end{equation} 
	So $H_1(X_2; M)_i=0= H_0(X_2; M)_i$ for all $i \geq 1$. Moreover, by Proposition \ref{ve} we have $H_\nu(X_2; M)$ is generalized Eulerian for $\nu=0, 1$. Thus by Theorem \ref{rigid-graded} it follows that $H_\nu(X_2; M)_i=0$ for all $i \geq 0$. From exact sequence \eqref{rg2} for $j=0,$ we get \[M_{-1} \cong M_0 =0.\] This contradicts our hypothesis. So Claim 2 is true. 
	
	Thus by Theorem \ref{rigid-graded} it follows that $M_i \neq 0$ for all $i \geq 0$. Hence the result is true when $m=2$. 
	
	We now assume $m \geq 3$ and the result is known for $m-1$. We have $M_{r} \neq 0$ for some $r$ with $-m< r <0$. We want to show $M_i \neq 0$ for all $i \in \Z$.
	
	\begin{claim} 
		$ M_i \neq 0$ for infinitely many $i<0$.
%		$i \leq 0$ {(\color{blue}{$i<0$}?)}.
	\end{claim}	
	
	Suppose if possible Claim 3 is false. Then by Theorem \ref{rigid-graded} we get $M_i = 0$ for all $i \leq -m$. Consider the exact sequence 
	\begin{equation}\label{rg3}
	0 \rt H_1(\partial_m; M)_i \rt M_{i+1} \overset{\partial_m}{\lrt} M_i \rt H_0(\partial_m; M)_i\rt 0.
	\end{equation}
	Therefore $H_\nu(\partial_m; M)_i=0$ for all $i \leq -m-1$ and for $\nu=0, 1$. Moreover, by Proposition \ref{pe} we have $H_\nu(\partial_m; M)(-1)$ is a generalized Eulerian $A_{m-1}(A)$-module. Thus for $\nu=0, 1;$ by Theorem \ref{rigid-graded} it follows that $H_\nu(\partial_m; M)(-1)_i=0$ for all $i \leq-m+1$ and by induction hypothesis we get that $H_\nu(\partial_m; M)(-1)_i=0$ for $-m+1< i \leq -1$. Hence $H_\nu(\partial_m; M)_i=0$ for $i \leq -2$. Now by exact sequence \eqref{rg3} we get \[M_{-1} \cong M_{-2} \cong \cdots \cong M_{-m+1} \cong M_{-m}.\] This implies $M_{-m} \cong M_r \neq 0$, a contradiction to our assumption. So Claim 3 is correct. Thus by Theorem \ref{rigid-graded} it follows that $M_i \neq 0$ for $i \leq -m$.
	
	\begin{claim}
		$ M_i \neq 0$ for infinitely many $i \geq 0$.
	\end{claim}	
	
	Suppose if possible Claim 4 is false. Then by Theorem \ref{rigid-graded} it follows that $M_i = 0$ for all $i \geq 0$. Consider the exact sequence 
	\begin{equation}\label{rg4}
	0 \rt H_1(X_m; M)_i \rt M_{i-1} \overset{X_m}{\lrt} M_i \rt H_0(X_m; M)_i\rt 0
	\end{equation}
	So $H_1(X_m; M)_i=0= H_0(X_m; M)_i$ for all $i \geq 1$. Moreover, from Proposition \ref{ve} we have $H_\nu(X_m; M)$ is generalized Eulerian for $\nu=0, 1$. 
%	So for $\nu=0, 1$ 
By Theorem \ref{rigid-graded} it follows that $H_\nu(X_m; M)_i=0$ for all $i \geq 0$ and by induction hypothesis we get $H_\nu(X_m; M)_i=0$ for $-m+1< i <0$. Thus $H_\nu(X_m; M)_i=0$ for $i \geq -m+2$. Now by exact sequence \eqref{rg4} we get \[M_{-m+1} \cong M_{-m+2} \cong \cdots \cong M_{-1} \cong M_{0}.\] This implies $M_{r} \cong M_0 = 0$, a contradiction to our hypothesis. So Claim 4 is correct. Thus by Theorem \ref{rigid-graded} it follows that $M_i \neq 0$ for $i \geq 0$.
	
	As $m \geq 3$ we also have to prove that if $c \ne r$ and $-m< c < 0$,
%	$-m <c <r$ {\color{blue}{($-m< c < 0$, Referee I: (7)?)}} 
then $M_{c} \neq 0$. Suppose if possible $M_c =0$. We will consider the following two cases:
	
	\vspace{0.15cm}
	\noindent
	{\it Case $1$:} $c< r$.
	
	\noindent
	By Proposition \ref{ve} we have $H_1(X_m; M)$ is a generalized Eulerian $A_{m-1}(A)$-module. Moreover, by exact sequence \eqref{rg4} we get $H_1(X_m; M)_{c+1}=0$. Notice $-m+1< c+1 \leq r<0$. 
%	(as $-m<c<r$) {(\color{blue}{shall we remove the bracket part?})}. 
So by induction hypothesis $H_1(X_m; M)_i=0$ for $-m+1<i<0$. 
	
	Again by exact sequence \eqref{rg4} we also get $H_0(X_m; M)_{c}=0$. Moreover, by Proposition \ref{ve} we have $H_0(X_m; M)$ is a generalized Eulerian $A_{m-1}(A)$-module. We will consider two sub-cases.
	
	\noindent
	{\it Sub-case $1.1$:} $-m+1< c<r<0$.
	
	\noindent
	By induction hypothesis we have $H_0(X_m; M)_i=0$ for $-m+1<i<0$. Thus by exact sequence \eqref{rg4} we get, \[M_{-1} \cong M_{-2} \cong \cdots \cong M_{-m+2} \cong M_{-m+1}.\] Hence $M_r \cong M_c =0$, a contradiction to our hypothesis.
	
	\noindent
	{\it Sub-case $1.2$:} $c=-m+1$.
	
	\noindent
	We have $H_{0}(X_m; M)_{-m+1}=0$. Therefore by induction hypothesis we have \[H_0(X_m; M)_i=0 \quad\text{for} -m+1<i<0.\] So by an argument similar to Sub-case $1.1$, we get $M_r \cong M_c=0$, a contradiction to our hypothesis.
	
	Thus our assumption is false. Hence $M_c \neq 0$ for 
$-m<c<r$.
	
	\vspace{0.15cm}
	\noindent
	{\it Case $2$:} $c>r$.
	
	\noindent
	By Proposition \ref{pe} we have $H_1(\partial_m; M)(-1)$ is a generalized Eulerian $A_{m-1}(A)$-module. Moreover, by exact sequence \eqref{rg3} we get that $H_1(\partial_m; M)(-1)_c= H_1(\partial_m; M)_{c-1}=0$. Notice $-m+1\leq r< c<0$.
%	 (as $-m<r<0$ and $-m<c<0$) {(\color{blue}{shall we remove the bracket part?})}. 
So by induction hypothesis $H_1(\partial_m; M)(-1)_i=0$ for $-m+1<i<0$. Thus $H_1(\partial_m; M)_i=0$ for $-m<i<-1$.
	
	Again by exact sequence \eqref{rg3} we also get $H_0(\partial_m; M)_c=0$. Moreover, by Proposition \ref{pe} we have $H_0(\partial_m; M)(-1)$ is a generalized Eulerian $A_{m-1}(A)$-module. 
%	We will consider {\color{blue}{(We consider?)}} 
We consider two sub-cases:
	
	\noindent
	{\it Sub-case $2.1$:} $c \neq -1$.
	
	\noindent
	We have $-m+2<c+1<0$
%	as $-m<r<c$ so $-m+1 \leq r<c$, $c<-1$ implies that $c+1<0$
and $H_0(\partial_m; M)(-1)_{c+1}=0$.
%as $H_0(\partial_m; M)_c=0$ 
So by induction hypothesis we get $H_0(\partial_m; M)(-1)_i=0$ for $-m+1<i<0$. Thus $H_0(\partial_m; M)_i=0$ for $-m<i<-1$. Therefore by exact sequence \eqref{rg3} we get \[M_{-1} \cong M_{-2} \cong \cdots \cong M_{-m+2} \cong M_{-m+1}.\] Hence $M_r \cong M_c=0$, a contradiction to our hypothesis.
	
	\noindent
	{\it Sub-case $2.2$:} $c = -1$.
	
	\noindent
	Then we have $H_0(\partial_m; M)(-1)_0=0$. 
	%as $H_0(\partial_m; M)_c=H_0(\partial_m; M)_{-1}=0$
	So by induction hypothesis $H_0(\partial_m; M)(-1)_i=0$ for $-m+1< i <0$ and hence $H_0(\partial_m; M)_i=0$ for $-m< i <-1$. Therefore by an argument similar to Sub-case $2.1$, we get $M_r \cong M_c=0$, a contradiction to our hypothesis.
	
	Thus our assumption is false. Hence $M_c \neq 0$ for $r<c<0$. The result follows.
\end{proof}

\section{Graded components of local cohomology modules}\label{examples}
This section is devoted in studying graded components of local cohomology modules under the following setup.

\s \label{sa_GE_comm} Let $A$ be a commutative, Noetherian ring containing a field $K$ of characteristic zero. Let $R= A[X_1, \ldots, X_m]$ be standard graded with $R_0=A$ and $m \geq 1$. Let $A_m(A)$ be the $m^{th}$ Weyl algebra over $A$. Then $A_m(A)$ has natural $\Z$-grading given by $\deg z=0$ for all $z \in A$, $\deg X_i=1$, and $\deg \partial_i=-1$. Let $\TT$ be a graded Lyubeznik functor on $^*\Mod(R)$. Set $M:= \TT(R)= \bigoplus_{n \in \Z}M_n$.

The following result from \cite{TP2} is extremely useful to us.

\begin{theorem}\label{GE_gen}\cite[Theorem 3.6]{TP2}
Let $\TT(R)$ be a graded Lyubeznik functor on ${}^*\Mod(R)$. Then $\TT(R)$ is a generalized Eulerian $A_m(A)$-module.
\end{theorem} 

We get the following result as an important outcome 
%corollary 
of Proposition \ref{inj_map}.

\begin{corollary}[with hypothesis as in \ref{sa_GE_comm}]\label{vinj}
Further, assume that $K$ is uncountable. Then there exists some homogeneous element $\eta \in K[X_1, \ldots, X_m]$ of degree $1$ such that $M_n \overset{\cdot\eta}{\rt} M_{n+1}$ is an injective map for all $n \geq -m+1$, and some homogeneous element $\xi \in K[\partial_1, \ldots, \partial_m]$ of degree $-1$ such that $M_n \overset{\cdot\xi}{\rt} M_{n-1}$ is an injective map for all $n \leq -m$.
\end{corollary}

\begin{proof}
From Lemmas \ref{lyucg} and \ref{vpcg}, we get that $M$ is a countably generated $A_m(A)$-module. Moreover, from Theorem \ref{GE_gen} we get that $M$ is a generalized Eulerian $A_m(A)$-module. So the result follows from Proposition \ref{inj_map}.
\end{proof}

%\s\label{uncountable}
%{\bf Key fact:} Let $A$ be a commutative Noetherian ring containing a field $K$ of characteristic zero. Let $R= A[X_1, \ldots, X_m]$ be standard graded with $\deg a=0$ for all $a \in A,$ and $\deg X_i=1$ for all $i$. Let $\TT$ be a graded Lyubeznik functor on $^*\Mod(R)$ and set $M =\TT(R)= \bigoplus_{u \in \Z} M_u$. For convenience we have to assume $K$ is uncountable. If this is not the case
\begin{remark}\label{uncountable}
If $K$ is not uncountable, then consider the flat extension $A \to A[[Y]]_{Y}$. Set $C:= A[[Y]]_Y$. From \cite[Remark 3.1, Proposition 3.2]{TP4}, we get that $C$ contains a uncountable field $K[[Y]]_{Y}$, and $C$ is a faithfully flat extension of $A$. We put $S:=C[X_1, \ldots, X_m]$ and $N:=M \otimes_R S$. From Lemma \ref{lyucg} we have $M$ is a countably generated $R$-module. Thus $N$ is a countably generated $S$-module. Furthermore, the flat extension $A \to C$ induces a flat homomorphism $R \to S$. In view of Sub-section \ref{fglyu}, $\TT \otimes_R S$ is a graded Lyubeznik functor on $^*\Mod(S)$. Set $\widehat{\TT}= \TT \otimes_R S$. Notice $N= \widehat{\TT}(S)= M \otimes_R S= M \otimes_A C= \bigoplus_{n \in \Z} (M_n \otimes_A C)$. As $C$ is a faithfully flat extension of $A$ so we have $M=0$ if and only if $N=0$, and $M_n=0$ if and only if $M_n\otimes_A C=0$.
\end{remark}

Vanishing of the components 
%plays a significant role in the study of 
has a significant effect on the graded local cohomology module. 
%We first prove the following result.
\begin{theorem}[with hypothesis as in \ref{sa_GE_comm}]\label{vanishing}
%Let $A$ be a commutative Noetherian ring containing a field $K$ of characteristic zero. Let $R= A[X_1, \ldots, X_m]$ be standard graded with $\deg A=0$ and $\deg X_i=1$ for all $i$. Let $\TT$ be a graded Lyubeznik functor on $^*\Mod(R)$. Set $M =\TT(R)= \bigoplus_{u \in \Z} M_u$. 
If $M_n=0$ for all $|n|\gg0$, then $M=0$.
\end{theorem}

\begin{proof} 
In view of Remark \ref{uncountable}, it is enough to prove the result considering $K$ is uncountable. From Theorem \ref{GE_gen}, we have $M$ is a generalized Eulerian $A_m(A)$-module. If $M \neq 0$, then by Corollary \ref{cgvanish} it follows that $M_n \neq 0$ for infinitely many  $n\gg 0$ OR $M_n \neq 0$ for infinitely many $n \ll 0$, a contradiction.
\end{proof}

The following theorem 
%shows 
indicates that $M=\TT(R)$ is tame.
\begin{theorem}[with hypothesis as in \ref{sa_GE_comm}]\label{tameness}
Then
\begin{align*}
&M_{n_0} \neq 0 \text{ for some } n_0 \geq -m+1 \implies M_{n} \neq 0 \text{ for all } n \geq {n_0},\\
&M_{n_0} \neq 0 \text{ for some } n_0 \leq -m \implies M_{n} \neq 0 \text{ for all } n \leq {n_0}.
\end{align*}
\end{theorem}
\begin{proof} 
In view of Remark \ref{uncountable}, it is enough to prove the result considering $K$ is uncountable. From Theorem \ref{GE_gen}, we have $M$ is a generalized Eulerian $A_m(A)$-module. By Corollary \ref{cgtame}, we get the required result.
%it follows that
%	\begin{align*}
%	&M_{u_0} \neq 0 \text{ for some } u_0 \geq -n+1 \implies M_{u} \neq 0 \quad \forall u \geq {u_0},\\
%	&M_{u_0} \neq 0 \text{ for some } u_0 \leq -n \implies M_{u} \neq 0 \quad \forall u \leq {u_0}.
%	\end{align*}
%	The result follows. 
\end{proof}

An important consequence of the Theorem \ref{rigid-graded} and Theorem \ref{rigidity} is shown bellow. 

\begin{theorem}[Rigidity]
	{\rm (}with hypothesis as in \ref{sa_GE_comm}{\rm)}. \label{rigid-LC_graded} Then 
	\begin{enumerate}[\rm(I)]
		\item \Tfae
		\begin{enumerate}[\rm(a)]
			\item $M_n \neq 0$ for infinitely many $n<0$.
			\item There exists $r$ such that $M_n \neq 0$ for all $n \leq r$.
			\item $M_n \neq 0$ for all $n \leq -m$.
			\item $M_t \neq 0$ for some $t \leq -m$.
		\end{enumerate}
		\item \Tfae
		\begin{enumerate}[\rm(a)]
			\item $M_n \neq 0$ for infinitely many $n \geq 0$.
			\item There exists  $s$ such that $M_n \neq 0$ for all $n \geq s$.
			\item $M_n \neq 0$ for all $n \geq 0$.
			\item $M_t \neq 0$ for some $t \geq 0$.
		\end{enumerate}
		\item If $m \geq 2$, then the following conditions are equivalent:
		\begin{enumerate}[\rm(a)]
			\item $M_n\neq 0$ for all $n \in \Z$.
			\item There exists $t$ with $-m < t<0$ such that $M_t \neq 0$.
		\end{enumerate}
	\end{enumerate}
\end{theorem}

\begin{proof}
From Theorem \ref{GE_gen}, we have $M$ is a graded generalized Eulerian $A_m(A)$-module. Therefore $M$ satisfies properties $\rm(I)$ and $\rm(II)$ by Theorem \ref{rigid-graded}, and property (III) by Theorem \ref{rigidity}. 
\end{proof}

\begin{remark}
The above result states that we can get complete information about the nonzero graded components of $M=\mathcal{T}(R)$, by checking only the three components $M_0, M_{-1}$, and $M_{-m}$.
\end{remark}

%The following result 
%%Corollary\ref{rigid}
%shows that non-vanishing of a single graded component of $M= \TT(R)$ will assure non-vanishing of its infinitely many graded components.
%%As a consequence we get the following crucial result. 
%\begin{theorem}[with hypothesis as in \ref{sa_GE_comm}]\label{rigid}
%If $n \geq 2$, then the following assertions are equivalent:
%\begin{enumerate}[\rm(i)]
%\item $M_u\neq 0$ for all $u \in \Z$.
%\item There exists $r$ with $-n < r<0$ such that $M_r \neq 0$.
%\end{enumerate}
%\end{theorem}
%
%\begin{proof}
%By \cite[3.2(2)]{TP2} and \cite[Theorem 3.6]{TP2} we have $\TT(R)$ is a graded generalized Eulerian $A_m(A)$-module. Therefore by Theorem \ref{rigidity}, $\TT(R)$ satisfies the equivalent conditions. 
%\end{proof}

As an instant 
%consequence 
sequel of Theorem \ref{rigid-LC_graded}, we get the asymptotic stability of dimensions of supports. Let $T=\bigoplus_{n \geq 0} T_n$ be a standard graded Noetherian ring, $T_+:=\bigoplus_{n>0}T_n$ denote the irrelevant ideal of $T$, and let $L=\bigoplus_{n \in \Z} L_n$ be a finitely generated graded $T$-module. In \cite{Brod1}, Brodmann considered the question 
%whether there is some 
regarding the existence of an integer $n_0$ such that $\dim_{T_0} H^i_{T_+}(L)_n =\dim_{T_0} H^i_{T_+}(L)_{n_0}$ for all $n \leq n_0$. Although he 
%proved 
gave positive answers for a few cases especially when $\dim T_0 \leq 2$, this problem is still open in general. 

\begin{theorem}[with standard assumption \ref{sa_GE_comm}]\label{stable_supp}
	%	 Further, assume that $\dim_A M_u$ is finite for all $u \in \Z$.
	Then 
	\begin{enumerate}[\rm (i)]
		\item $\dim_A M_n= \dim_A M_{-m}$ for all $n \leq -m$.
		\item $\dim_A M_n= \dim_A M_0$ for all $n \geq 0$.
		\item If $m \geq 2$ and $-m <r, s<0$, then 
		\begin{enumerate}[\rm (a)]
			\item $\dim_A M_r= \dim_A M_{s}$.
			\item $\dim_A M_r\leq \min\{ \dim_A M_{-m}, \dim_A M_0\}$.
		\end{enumerate}
	\end{enumerate}
\end{theorem}

\begin{proof}
Let $P$ be a prime ideal of $A$. In view of Subsection \ref{fglyu}, $\mathcal{T}(-)_P$ is a graded Lyubeznik functor on ${}^*\Mod\left(A_P[X_1, \ldots, X_m]\right)$.
	
(i) Take $n \leq -m$. By Theorem \ref{rigid-LC_graded}(I) we get that $\left(M_{-m}\right)_P=\left(M_P\right)_{-m}>0$ if and only if $\left(M_n\right)_P=\left(M_P\right)_n>0$. The result follows.
	
(ii) and (iii) follow from Theorem \ref{rigid-LC_graded}(II) and (III) respectively.
\end{proof}

%%%%%%%%%%% wrong %%%%%%%%%%%%%%%%%%
%\begin{remark}
%In the next chapter, we show 
%%provide a condition on $A$ so that 
%$\dim_A M_u$ is finite for all $u \in \Z$,
%under 
%%certain conditions 
%a specific assumption on $A$, see Lemma \ref{id}.	
%\end{remark}	

%\vspace{0.15cm}
%\noindent
%{\bf Some examples:}
\section{Examples}

Let $A$ be a Noetherian ring containing a field of characteristic zero. Let $R= A[X_1, \ldots, X_m]$ be standard graded with $R_0=A$ and $m \geq 1$. 
Let $I$ be a homogeneous ideal in $R$. Set $M:= H_I^i(R)= \bigoplus_{n \in \Z}M_n$. In this section, we produce a few examples to show that some results 
%of \cite{TP2} 
due to Puthenpurakal \cite{TP2} are false if $A$ is not regular.

\s In \cite[Theorem 1.7]{TP2}, it is shown that if $A$ is a regular domain, $I \cap A \neq 0$, and $M_n \neq 0$, then $M_n$ is NOT finitely generated as an $A$-module. This is not true in general.
\begin{example*}
Let $(A, \m)$ be a local domain with dimension $d$ such that $H_{\m}^i(A)$ is finitely generated and non-zero for some $i<d$. Take $I=\m R$. Then $H_I^i(R)_0=H_{\m}^i(A)$ is non-zero and finitely generated as an $A$-module. 
\end{example*}

\s In \cite[Theorem 1.8]{TP2} it is shown that if $A$ is a regular ring, then for every prime ideal $P$ of $A$,
%\begin{align*}
%&\text{either } \quad \mu_j(P, M_u)=\infty\text{ for all }u \in \Z\\ 
%& \text{or } \quad \mu_j(P, M_u)<\infty\text{ for all }u \in \Z.
%\end{align*}
either $\mu_j(P, M_n)=\infty$ for all $n \in \Z$ or $\mu_j(P, M_n)<\infty$ for all $n \in \Z$. This does not hold in general.
\begin{example*}\label{eg2}
There exists example of Noetherian local ring $(A, \m)$ with $H_J^i(A)$ having infinite zeroth Bass number over $\m$ for some ideal $J$ in $A$, see \cite{RH} (also see \cite[Chapter 20, Section 5]{TFLC}). Now $M=H_{JR}^i(R)=H_J^i(A) \otimes_A R= \bigoplus_{n \in \N} M_n$. So in this case $\mu_0(\m, M_n)=0$ for $n<0$ (as $M_n=0$) and $\mu_0(\m, M_n)$ is infinite for $n \geq 0$.	
\end{example*}

\s Assume $m=1$. In \cite[Theorem 1.9]{TP2} it is shown that if $A$ is a regular ring and $P$ is a prime ideal,
% of $A$ 
then $\mu_j(P, M_n)<\infty$ for all $n \in \Z$. This is not true if $A$ is not regular.
\begin{example*}
In Example \ref{eg2}, take $m=1$.
\end{example*}

\s In \cite[Theorem 1.13]{TP2} it is shown that if $A$ is regular local or a smooth affine $K$-algebra then $\bigcup_{n \in \Z}\Ass_A M_n$ is a finite set. This is false in general.
\begin{example*}
There exists affine or local ring $A$ such that $\Ass_A(H_I^2(A))$ is an infinite set. For example, take $K$ be an arbitrary field and consider the hypersurface \[A=K[s, t, u, v, x, y]/(sv^2x^2-(s+t)vxuy+tu^2y^2).\] Then $H^2_{(x, y)}(A)$ has infinitely many associated primes, see \cite[Example 22.17]{TFLC}. Localization of $A$ at the homogeneous maximal ideal $(s, t, u, v, x, y)$ gives a local example. Take $I=(x, y)R$. Since $M=H_{I}^2(R)=H_{(x, y)}^2(A) \otimes_A R= \bigoplus_{n \in \N} M_n$ so in both cases $\Ass_A M_0$ is infinite.
\end{example*}

\s In \cite[Theorem 1.14]{TP2} it is shown that if $A$ is regular then 
\begin{equation*}\label{inj-dim-rel}
\injdim M_n \leq \dim M_n \quad \text{ for all } n \in \Z.
\end{equation*} 
%The following example shows that this is not true in general.
It is not true in general, as the following example shows.
\begin{example*}
Let $(A, \m)$ be a Noetherian local ring containing a field with $\dim A >0$ and $\depth A =0$. Note that $M=H_{\m R}^0(R)=H_{\m}^0(A) \otimes_A R$. Thus $M_0=H_{\m}^0(A)$ and hence $M_0$ is a finitely generated $A$-module. If $\injdim M_0< \infty$ then by \cite[Corollary 9.6.2]{BJ} we get that $A$ is Cohen-Macaulay, a contradiction. So $\injdim M_0$ is infinite.
\end{example*}

%In fact, N\'u\~nez-Betancourt \cite{LNB} observed that if $A$ is a zero dimensional non-Gorenstein ring, then $\injdim_A H^0_{(0)}(A) \nleq \dim_A H^0_{(0)}(A)$. 
%%Note that $H^0_{(0)}(A)=A$. 
%We take $M:=H^0_{(0)}(R)=R$. Clearly $M_0=R_0=A$, and the inequality \eqref{inj-dim-rel} does not hold for $u=0$. 

\section{Graded local cohomology module with respect to a pair of ideals}
In this section, we turn our attention to the local cohomology modules with respect
to a pair of ideals and show that they are also generalized Eulerian. First, we recollect basic concepts and some preliminary results that we need. 

%\noindent
%\vspace{0.25cm}
\s \label{recall-Lc-pair}{\it Recall}.
\begin{enumerate}[\rm \bf I.]
	
\item\label{itm:1} Let $B$ be a commutative Noetherian ring, 
%and $I, J$ be ideals in $B$. Let $N$ be a $B$-module. 
$I, J$ ideals in $B$, and $N$ a $B$-module. Set 
\[\Gamma_{I,J}(N)=
%\bigcup_{u \geq 1} (0:_N I^u)$
\{y \in N \mid I^ny \subseteq J y \mbox{ for some } n \geq 1\}.\] 
One can easily check that $\Gamma_{I,J}(N)$ is a submodule of $N$. We call $\Gamma_{I,J}(-)$ the {\it $(I, J)$-torsion functor}. 
%Clearly 
Notice that $\Gamma_{I,J}(-)=\Gamma_{I}(-)$ if $J=0$. By \cite[Lemma 1.2]{TYY}, 
%the functor 
$\Gamma_{I,J}(-)$ is a left exact functor. For $i \geq 0$, the $i$-th right derived functor of $\Gamma_{I,J}(-)$ is 
%referred as 
called {\it the $i$-th local cohomology functor} and is denoted by $H^i_{I,J}(-)$. We refer to $H^i_{I,J}(N)$ 
%which we get by applying the functor $H^i_I(-)$ to $L$, %is
as the {\it $i$-th local cohomology module of $N$ with respect to $(I,J)$}.
%to $\Gamma_{I,J}(N)$ as the $(I,J)$-torsion part of $N$.

Consider 
%the subset 
$W(I, J)=\{\p \in \Spec (B) \mid I^n \subseteq \p +J \mbox{ for some integer } n \geq 1\}$.
% of $\Spec(C)$. 
Generally, $W(I, J)$ is not  a closed subset of $\Spec(B)$. From \cite[Corollary 1.8]{TYY} we have $y \in \Gamma_{I,J}(N)$ if and only if $\Supp (By) \subseteq W(I,J)$.

\item \label{itm: I} In \cite{TYY}, 
%R.Takahashi, Y. Yoshino, and T. Yoshizawa
Takahashi et al 
%presented a generalization of 
generalized the \v Cech complex to obtain the local cohomology modules with respect a pair of ideals as the homologies of the complex. For an element $b \in B$, consider the subset $S_{b, J}=\{b^n+j\mid n \geq 0, j \in J\}$. It can be checked that $S_{b, J}$ is a multiplicatively closed subset of $B$. By $N_{b, J}:=S^{-1}_{b, J} N$ we denote the module of fractions of $N$ with respect to $S_{b, J}$. 
%Let $I=(a_1, \ldots, a_s)$. We set 
For any sequence of elements ${\bf b}:=b_1,\ldots, b_s$ of $B$, define a complex
\begin{equation}\label{gen-cech}
{\bf C}^{\bullet}_{{\bf b}, J}: ~ 0 \to B \to \prod_{i=1}^s B_{b_i, J} \to \prod_{i<j} \big(B_{b_i, J}\big)_{b_j, J} \to \cdots \to \big( \cdots \big(B_{b_, J}\big) \cdots \big)_{b_s, J} \to 0
\end{equation}
by setting ${\bf C}^{\bullet}_{{\bf b}, J}=\otimes_{i=1}^s {\bf C}^{\bullet}_{b_i, J}$, where ${\bf C}^{\bullet}_{b_i, J}=\big( 0 \to B \to B_{b_i, J} \to 0\big)$. If $I=(b_1, \ldots, b_s)$, then from \cite[Theorem 2.4]{TYY} we have 
\[H^i_{I,J}(N) \cong H^i\big({\bf C}^{\bullet}_{{\bf b}, J} \otimes_B N\big).\]

%\vspace{0.2cm}
\item Let $T$ be a graded ring, $I$ be a homogeneous ideal, and $J$ be an arbitrary ideal in $T$. Let $L$ be a graded $T$-module. In \cite{LimPer}, Lima and Jorge P\'erez introduced a grading on $\Gamma_{I,J}(L)$ 
%in the following
by setting
\begin{equation}\label{lim-grading}
\Gamma_{I,J}(L)_u=\{y \in L_u \mid I^n y \subseteq J y \mbox{ for some } n \geq 1\} \quad \mbox{ for all } u \in \Z.
\end{equation}
%Since the category of the graded modules has enough injectives,
%we can form the ith right derived functor of
Define ${}^*H^i_{I,J}(L)$ to be the $i$-th right derived functor of $\Gamma_{I,J}(L)$ on ${}^*\Mod(T)$. Then they showed that for all $i$,
%$i \geq 0$,
\begin{equation}\label{lim-rel}
{}^*H^i_{I,J}(L) \cong H^i_{I,J}(L)
\end{equation}
as underlying $T$-modules, see \cite[Proposition 2.5]{LimPer}. 

%//////////////-------------//////////////////
%They extended some classical results 
%%regarding 
%about the graded components of local cohomology 
%%for the case of 
%to the local cohomology with respect to a pair of ideals when $T$ is a standard graded Noetherian ring, $J$ is generated by degree zero elements, and $L$ is a finitely generated $T$-module. For instance, they proved that $H^i_{T_+, J}(L)_n$ is a finite $T_0$-module for all $n \in \Z$ and each $i \geq 0$ if and only if $H^i_{R_+, J}(L)_n=0$ for $n \gg 0$ and each $i \geq 0$, see \cite[Theorem 5.4]{LimPer}.
%//////////////-------------//////////////////
%\begin{enumerate}[\rm i)]
%\item $H^i_{T_+, J}(N)_n$ is a finite $T_0$-module for all $n \in \Z$ and each $i \geq 0$ if and only if $H^i_{R_+, J}(N)_n=0$ for $n \gg 0$ and each $i \geq 0$.
%\item Further, assume that $N$ is finitely generated.
%	% $R$-module. 
%Fix $i$. If $H^i_{T_+, J}(N)_n$ is a finite $R_0$-module for all $n \ll 0$ and each $j \leq i$, then $\Ass_{T_0} H^i_{T_+, J}(N)_n$ is asymptotically sable if $n \to - \infty$.
%\end{enumerate}
\end{enumerate}

%\noindent
%\textbullet~ Let $(R_0, \m_0)$ is local and $J$ is generated by degree zero elements. Let $I$ be a homogeneous ideal such that $I \cap R_0 \subseteq J \cap R_0$ and $\sqrt{(I+J) \cap R_+}=\sqrt{R_+}$. Further, $H^i_{I,J}(L)_u=0$ for $u \gg 0$ and each $i \geq 0$. Then by \cite[Corollary 3.7]{Zadeh}$, H^i_{I, J}(L)$ is tame for all $i \geq d:=\dim L/(\m_0+J)L$.

\s We now define a graded version of the generalized \v Cech complex (see \ref{recall-Lc-pair} \ref{itm: I}) under the following assumptions.
%\vspace{0.15cm}

\noindent
{\bf S1.} Let $T$ be a standard graded Noetherian ring and $I, J$ both are homogeneous ideals in $T$. Let $L$ be a graded $T$-module. 

For any homogeneous element $b$ of $T$, set \[{}^*S_{b, J}=\{b^n+j \mid n \geq 0, j \mbox{ is a homogeneous element of }J \mbox{ with } \deg j=\deg b^n\}.\]
We put ${}^*L_{b, J}={}^*S^{-1}_{b, J} L$. Let $n_1, n_2 \geq 0$ and $j_1, j_2 \in J$ be homogeneous elements. We have \[(b^{n_1}+j_1) \cdot (b^{n_2}+ j_2)=b^{n_1+n_2} +(j_2b^{n_1}+j_1b^{n_2}+j_1j_2)=b^{n_1+n_2}+j\] 
with $j \in J$. Clearly, $\deg j =\deg b^{n_1+n_2}$. Hence ${}^*S_{b, J}$ is a multiplicativly closed subset of $T$. We can define a complex ${}^*{\bf C}^{\bullet}_{{\bf b}, J}$ similarly as 
%${\bf C}^{\bullet}_{{\bf a}, J}$ 
\eqref{gen-cech} replacing $S_{b, J}$ by ${}^*S_{b, J}$.

The following theorem is crucial to prove our main result of this section.
\begin{theorem}[with hypothesis as in S1]\label{main}
There is a graded isomorphism
\[H^i_{I,J}(L) \cong H^i\big({}^*{\bf C}^{\bullet}_{{\bf b}, J} \otimes_T L\big).\]
\end{theorem}

%One can prove 
%Theorem \ref{main} follows using by the similar arguments stated in 
%With 
%the same 
Using a graded version of the arguments 
%used to prove 
given in the proof of \cite[Theorem 2.4]{TYY}, 
%Theorem \ref{main} 
the above result follows. However, for the sake of completeness, we give a proof here.

%\begin{note*}
%{\bf Although we have imitated the proof given for \cite[Theorem 2.4]{TYY}, to ensure that the same method works, 
%%to deal with the graded case, 
%we have written the proof of Theorem \ref{main} in details breaking into Lemma \ref{long-cech} and Lemma \ref{injhull-cohom-zero}. \textcolor{red}{We can remove the proofs of the two lemmas, as the graded setting has no effect on them. The proof of the claim inside the proof of Theorem \ref{main} is the main part which one needs to verify.}}
%\end{note*}

First, we state 
%some observations 
the graded version of a few results of the paper \cite{TYY} that 
%are needed 
we 
%need 
require to prove Theorem \ref{main}. We set
\[{}^*W(I,J)=\{\p \in W(I,J) \mid \p \mbox{ is homogeneous}\}.\]
%%%%%%
%	Since $L$ is graded, by \cite[Lemma 1.5.6]{BH}, $\Ass L$ consists of homogeneous prime ideals. In view of \cite[Proposition 1.7, Proposition 1.10]{TYY}, it follows that 
%	\begin{enumerate}
%		\item $\Ass L \subseteq {}^*W(I,J)$ if and only if $L$ is $(I,J)$-torsion,
%		\item $\Ass L \cap {}^*W(I,J) =\Ass \Gamma_{I,J}(L)$, as $\Gamma_{I,J}(L)$ is also graded.
%	\end{enumerate} 
%	By \cite[theorem 3.6.3]{BH} and \cite[Proposition 1.11.]{TYY}, we have
Following the same lines of proof 
%given for 
of \cite[Proposition 1.11]{TYY}, one can show the following.

\begin{lemma}[with hypothesis as in S1]\label{injhull-torsion}
For any homogeneous prime ideal $\p$ of $T$, ${}^*E(T/\p)$ is an $(I,J)$-torsion $T$-module if $\p \in {}^*W(I,J)$, and ${}^*E(T/\p)$ is an $(I,J)$-torsion free $T$-module if $\p \notin {}^*W(I,J)$.
\end{lemma}
%\end{note}
We further need the following lemmas.

\begin{lemma}[with hypothesis as in S1]\label{rel-W-S}
A homogeneous prime ideal $\p \in {}^*W(I,J)$ if and only if $\p \cap {}^*S_{b,J} \neq \phi$ for any homogeneous element $b \in I$.
\end{lemma}

\begin{proof}
Let $\p \in {}^*W(I,J)$. Then $I^n \subseteq \p +J$ for some $n \geq 1$.	So for any homogeneous element $b \in I$, we have $b^n \subseteq \p+J$, that is, there are some $c \in J$ and $d \in \p$ such that $b^n=c+d$. Since $J$ and $\p$ are homogeneous ideals, we can choose $c, d$ to be homogeneous. Clearly, $\deg c=\deg b^n$ and hence $d= b^n-c \in {}^*S_{b,J} \cap \p$.
	
For the converse, take any homogeneous element $b$ of $I$. Let $d \in {}^*S_{b,J} \cap \p$. Then $d=b^n+c$ for some $n \geq 1$ and some homogeneous element $c \in J$ with $\deg b^n=\deg c$. Thus $b^n=d - c \in \p +J$. As this holds for any homogeneous element $b$ of $I$ 
%which 
and $I$ is a finitely generated homogeneous ideal so $I^{n'} \subseteq \p +J$ for some large $n'$, that is, $\p \in {}^*W(I,J)$.
\end{proof}

\begin{lemma}[with hypothesis as in S1]\label{long-cech}
Let $0 \to L \to M \to N \to 0$ be a short exact sequence of graded $T$-modules. Let ${\bf b}:= b_1, \ldots, b_s$ be a sequence of homogeneous elements in $T$. Then there exists a natural long exact sequence of graded $T$-modules
\[\cdots \to H^i\big({}^*{\bf C}^{\bullet}_{{\bf b}, J} \otimes_T L\big) \to H^i\big({}^*{\bf C}^{\bullet}_{{\bf b}, J} \otimes_T M\big) \to H^i\big({}^*{\bf C}^{\bullet}_{{\bf b}, J} \otimes_T N\big) \to H^{i+1}\big({}^*{\bf C}^{\bullet}_{{\bf b}, J} \otimes_C L\big) \to \cdots.\]
\end{lemma}
\begin{proof}
Consider the following commutative diagram
\[\xymatrix@C=1.2em@R=1em{0 \ar[r]& L \ar[r]\ar[d]& \prod_{i=1}^s {}^*L_{b_i, J} \ar[r]\ar[d]&  \prod_{i<j} \big({}^*L_{b_i, J}\big)_{b_j, J} \ar[r]\ar[d]&  \cdots \ar[r]&  \big( \cdots \big({}^*L_{b_, J}\big) \cdots \big)_{b_s, J} \ar[r]\ar[d]& 0\\
0 \ar[r]& M \ar[r]\ar[d]& \prod_{i=1}^s {}^*M_{b_i, J} \ar[r]\ar[d]&  \prod_{i<j} \big({}^*M_{b_i, J}\big)_{b_j, J} \ar[r]\ar[d]&  \cdots \ar[r]&  \big( \cdots \big({}^*M_{b_, J}\big) \cdots \big)_{b_s, J} \ar[r]\ar[d]& 0\\
0 \ar[r]& N \ar[r]& \prod_{i=1}^s {}^*N_{b_i, J} \ar[r]&  \prod_{i<j} \big({}^*N_{b_i, J}\big)_{b_j, J} \ar[r]&  \cdots \ar[r]&  \big( \cdots \big({}^*N_{b_, J}\big) \cdots \big)_{b_s, J} \ar[r]& 0}\]
with exact rows and columns. From this we get an exact sequence of cochain complexes
\[0 \to {}^*{\bf C}^{\bullet}_{{\bf b}, J} \otimes_T L \to {}^*{\bf C}^{\bullet}_{{\bf b}, J} \otimes_T M \to {}^*{\bf C}^{\bullet}_{{\bf b}, J} \otimes_T N \to 0\]
which induces the expected long exact sequence.
\end{proof}

\begin{lemma}\label{long-LC}\cite[Section 2, (D)]{LimPer}
Let $T$ be a graded ring, $I$ be a homogeneous ideal, and $J$ be an arbitrary ideal in $T$. For any short exact sequence $0 \to L \to M \to N \to 0$ of graded $T$-modules, there exists a natural long exact sequence 
\[\cdots \to {}^*H^i_{I,J}(L) \to {}^*H^i_{I,J}(M) \to {}^*H^i_{I,J}(N) \to {}^*H^{i+1}_{I,J}(L) \to \cdots\]
of graded $T$-modules.
\end{lemma}

\begin{lemma}[with hypothesis as in S1]\label{injhull-cohom-zero}
For every ${}^*$injective $T$-modules $I$,
\[H^i({}^*{\bf C}^{\bullet}_{{\bf b}, J} \otimes_T I)=0 \quad \mbox{ for each } i > 0.\]
\end{lemma}

\begin{proof}
It is enough to result the claim for ${}^*E:={}^*E_T(T/\p)$, where $\p$ is a homogeneous prime ideal of $T$. Suppose $s=1$. Consider the complex,
\[{}^*{\bf C}^{\bullet}_{b_1, J} \otimes_T {}^*E: ~ 0 \to {}^*E \to {}^*E_{b_1, J} \to 0.\]
In view of Lemma \ref{rel-W-S}, ${}^*E_{b_1, J}=\{0\}$ if $\p \in W((b_1), J)$, and ${}^*E_{b_1, J} \cong {}^*E$ otherwise. 
%Every element of ER(R=p) is killed by some power of $\p$. If $x \notin \p$, then $x$ acts on $E_T(T/\p)$ as a unit.

Next, suppose $s>1$. We put ${\bf b}':=b_2, \ldots, b_s$. Notice that ${}^*{\bf C}^{\bullet}_{{\bf b}, J}= {}^*{\bf C}^{\bullet}_{b_1, J} \otimes_T {}^*{\bf C}^{\bullet}_{{\bf b}', J}$. Consider the first quadrant double complex

\[\xymatrix@C=1.5em@R=1em{ & \vdots \ar[d] & \vdots \ar[d] & \\
0 \ar[r] &  T \otimes_T \left({}^*{\bf C}^{q+1}_{{\bf b}', J} \otimes_T {}^*E\right) \ar[r] \ar[d] & {}^*T_{b_1, J} \otimes_T \left({}^*{\bf C}^{q+1}_{{\bf b}', J} \otimes_T {}^*E\right) \ar[r] \ar[d] & 0\\
0 \ar[r] & T \otimes_T \left({}^*{\bf C}^{q}_{{\bf b}', J} \otimes_T {}^*E\right) \ar[r] \ar[d] & {}^*T_{b_1, J} \otimes_T \left({}^*{\bf C}^{q}_{{\bf b}', J} \otimes_T {}^*E\right) \ar[r] \ar[d] & 0\\
0 \ar[r] & T \otimes_T \left({}^*{\bf C}^{q-1}_{{\bf b}', J} \otimes_T {}^*E\right) \ar[r] \ar[d] & {}^*T_{b_1, J} \otimes_T \left({}^*{\bf C}^{q-1}_{{\bf b}', J} \otimes_T {}^*E\right) \ar[r] \ar[d] & 0\\
	& \vdots & \vdots &}
\]
Since both $T$ and $T_{b_1, J}$ are free over $T$, 
\[E^1_{p,q}=H^q\left({}^*{\bf C}^{p}_{b_1, J}\otimes_T \left({}^*{\bf C}^{\bullet}_{{\bf b}', J} \otimes_T {}^*E\right)\right) \cong {}^*{\bf C}^{p}_{b_1, J}\otimes_T H^q\left({}^*{\bf C}^{\bullet}_{{\bf b}', J} \otimes_T {}^*E\right)\]
and hence
\[E^2_{p,q}=H^p\left({}^*{\bf C}^{\bullet}_{b_1, J}\otimes_T H^q\left({}^*{\bf C}^{\bullet}_{{\bf b}', J} \otimes_T {}^*E\right)\right).\]
Clearly, 
\[E^2_{p,q} \implies H^{p+q}\left({}^*{\bf C}^{\bullet}_{{\bf b}, J} \otimes_T {}^*E\right),\]
as this spectral sequence is bounded. By induction hypothesis, $H^q\left({}^*{\bf C}^{\bullet}_{{\bf b}', J} \otimes_T {}^*E\right)=0$ for all $q>0$. Therefore, the spectral sequence collapses at $E^2$. Thus, by \cite[Definition 5.2.7]{W},
\begin{align*}
H^{n}\left({}^*{\bf C}^{\bullet}_{{\bf b}, J} \otimes_T {}^*E\right)
&=H^n\left({}^*{\bf C}^{\bullet}_{b_1, J}\otimes_T H^0\left({}^*{\bf C}^{\bullet}_{{\bf b}', J} \otimes_T {}^*E\right)\right)\\
&=H^n\left({}^*{\bf C}^{\bullet}_{b_1, J}\otimes_T \Gamma_{({\bf b}'), J}( {}^*E)\right)\\
&=H^n\left(0 \to \Gamma_{({\bf b}'), J}( {}^*E) \to \Gamma_{({\bf b}'), J}( {}^*E)_{(a_1, J)} \to 0 \right).
\end{align*}
Hence, $H^{n}\left({}^*{\bf C}^{\bullet}_{{\bf b}, J} \otimes_T {}^*E\right)=0$ for all $n \geq 2$. By Lemma \ref{injhull-torsion}, $\Gamma_{({\bf b}'), J}( {}^*E)={}^*E$ or $\{0\}$. The result follows from the case $s=1$.
\end{proof}

\begin{proof}[Proof of Theorem \ref{main}]
Let $I=(b_1, \ldots, b_s)$, where $b_i$'s are homogeneous elements of $T$. 
%First, we prove 
\begin{claim*}
$H^0\big({}^*{\bf C}^{\bullet}_{{\bf b}, J} \otimes_T L\big) \cong \Gamma_{I,J}(L)$ as graded $R$-modules. 
\end{claim*}
%For that 
To prove the claim, we show that $y \in \Gamma_{I,J}(L)_u$ if and only if $ y \in \mathcal{K}_u $ for all $u \in \Z$, where $\mathcal{K}:=\ker \big( L \to \prod_{i=1}^s L_{b_i, J}\big)$. 
%It is enough to prove this fact for homogeneous elements $y$. 
Fix $u$. Let $y \in \Gamma_{I,J}(L)_u$.
% be a homogeneous element. 
Then by \eqref{lim-grading}, there exists some integer $n \geq 1$ such that $I^n y \subseteq J y$, i.e., $b_i^n y \in J y$ for $i=1, \ldots, s$. Let $b_i^n y=c_i y$ for some $c_i \in J$. Since $b_i$ and $y$ are homogeneous elements, we can choose $c_i$ to be homogeneous (here, we use the fact that $J$ is a homogeneous ideal).
%$(b_1+b_2)y=a_i^u y$, clearly $b_2 y=0$ being of different degree, but it is not necessary $b_1 \in J$
Clearly, $\deg b_i^n=\deg c_i$ and hence $b_i^n-c_i \in {}^*S_{b_i, J}$. As $(b_i^n-c_i) y=0$ for $i=1, \ldots, s$ so we get $y \in \mathcal{K}$. Conversely, let $y \in \mathcal{K}_u$. Then there exists $n_i \geq 1$ and a homogeneous element $c_i \in J$ such that $\deg b_i^{n_i}=\deg c_i$ and $(b_i^{n_i}-c_i) y=0$. Thus $b_i^{n_i}y=c_i y \in J y$ for $i=1, \ldots, s$ and hence $I^n y \subseteq J y$ for some large $n$, that is, $y \in \Gamma_{I,J}(L)$.
%can choose $n=n_1+\cdots+n_s-1$, as $\{b_^{r_1} \cdots b_n^{r_n} \mid r_1+\cdots+r_s=n\}$ is a generating set of $I^n$.

%%%%	
%Notice that $C_i$ is a flat $R$-module for each $i$.
%%as $S_{a, J}$ is a closed subset of $R$ 
%%The proof follows along the same lines as that of
%%some parts of the proof are in the same line as that of Theorem 3.1
%Following the same line of proof as that of \cite[Lemma 5.1.9]{BS}, one can show that $\{H^i({}^*{\bf C}^{\bullet}_{{\bf a}, J} \otimes_C 
%%(\mbox{\footnotesize $\bullet$})
%(\mbox{\footnotesize \textbullet}))\}_{i \geq 0}$ is a {\it negative strongly connected sequence} (see \cite[Definition 1.3.1]{BS}) of covariant functors from ${}^*\Mod(R)$ to itself. In view of the claim and \cite[Theorem 1.3.5]{BS}, to show $H^i({}^*{\bf C}^{\bullet}_{{\bf a}, J} \otimes_C L) \cong H^i(\Gamma_{I,J}(L))$ for every $i$, it is enough to show 
%%%%%%

Now, we use induction on $i$. The case $i=0$ follows from the first claim. We now assume that $i>0$. Then, by Lemma \ref{long-cech} and Lemma \ref{long-LC}, the graded short exact sequence
\[0 \to L \to {}^*E_T(L) \to \mathcal{C} \to 0\]
gives us the following commutative diagram
\[\scalebox{0.9}{\xymatrix@C=0.7em@R=1em{
\cdots \ar[r] & H^{i-1}\big({}^*{\bf C}^{\bullet}_{{\bf b}, J} \otimes_T {}^*E_T(L)\big) \ar[r] \ar[d] & H^{i-1}\big({}^*{\bf C}^{\bullet}_{{\bf b}, J} \otimes_T \mathcal{C}\big) \ar[r] \ar[d]& H^{i}\big({}^*{\bf C}^{\bullet}_{{\bf b}, J} \otimes_T L\big) \ar[r] \ar[d] & H^{i}\big({}^*{\bf C}^{\bullet}_{{\bf b}, J} \otimes_T {}^*E_T(L)\big) \ar[r] \ar[d] & \cdots\\
\cdots \ar[r] & {}^*H^{i-1}_{I,J}({}^*E_T(L)) \ar[r] & {}^*H^{i-1}_{I,J}(\mathcal{C}) \ar[r] & {}^*H^i_{I,J}(L) \ar[r] & {}^*H^{i}_{I,J}({}^*E_T(L)) \ar[r] & \cdots}}\]
Combining Lemma \ref{injhull-cohom-zero} and \cite[Proposition 2.4]{LimPer}, we get that $H^{i}\big({}^*{\bf C}^{\bullet}_{{\bf b}, J} \otimes_T {}^*E_T(L)\big)=0={}^*H^{i}_{I,J}({}^*E_T(L))$. Moreover, by induction hypothesis
\[H^{i-1}\big({}^*{\bf C}^{\bullet}_{{\bf b}, J} \otimes_T \mathcal{C}\big) \cong {}^*H^{i-1}_{I,J}(\mathcal{C}), \quad \mbox{and} \quad H^{i-1}\big({}^*{\bf C}^{\bullet}_{{\bf b}, J} \otimes_T {}^*E_T(L)\big) \cong {}^*H^{i-1}_{I,J}({}^*E_T(L)).\]
Hence, by Five lemma we get that $H^{i}\big({}^*{\bf C}^{\bullet}_{{\bf b}, J} \otimes_T L\big) \cong {}^*H^{i}_{I,J}(L)$. In light of \eqref{lim-rel}, the result follows.
\end{proof}

Now we state the main result of this section.
\begin{theorem}
Let $A$ be a commutative Noetherian ring containing a field of characteristic zero. Let $R=A[X_1, \ldots, X_m]$ and $A_m(A)$ be the $m^{th}$ Weyl algebra over $A$. Suppose both $R$ and $A_m(A)$ are standard graded. If $I, J \subseteq R$ are homogeneous ideals, and 
%$J$ be an arbitrary ideal of $R$. Let 
$L$ is a generalized Eulerian $A_m(A)$-module, then $H^i_{I,J}(L)$ is a generalized Eulerian $A_m(A)$-module.
\end{theorem}

\begin{proof}
If $L$ is 
%an Eulerian (resp. 
a generalized Eulerian $A_m(A)$-module, then so is ${}^*L_{a, J}$ by 
%\cite[Proposition 3.4]{MZ} (resp. by 
\cite[Corollary 3.7]{TPJS}. 
%In view of 
By Theorem \ref{main}, it follows that $H^i_{I,J}(L)$ is a generalized Eulerian $A_m(A)$-module for each $i \geq 0$. 
\end{proof}

\begin{remark}
Due to Theorem \ref{rigid-graded} and Theorem \ref{rigidity}, it follows that $H^i_{I,J}(L)$ has rigidity property. Let us remark that the rigidity property of a module implies its vanishing and tameness properties. As a consequence, $H^i_{I,J}(L)$ has vanishing and tameness properties. However, we fail to prove the countable generation of $H^i_{I,J}(L)$ to use Theorem \ref{cgvanish} and Theorem \ref{cgtame} directly. 
%In addition, by Theorem \ref{cgvanish} and Theorem \ref{cgtame}, $H^i_{I,J}(L)$ has vanishing and tameness properties when $C$ contains an uncountable field.
\end{remark}

%{\color{blue}{Referee I, Comment 9: The references [6] and [19] (currently, [20]) were not be cited in the paper.}} {\color{red}{Shall we remove?}}

%\let\cleardoublepage\clearpage
	
\end{document}